\numberwithin{equation}{section}
\renewcommand{\AA}{\mathbb A}
\newcommand{\FF}{\mathbb F}
\newcommand{\GG}{\mathbb G}
\newcommand{\QQ}{\mathbb Q}
\newcommand{\ZZ}{\mathbb Z} 
\newcommand{\Zhat}{\widehat\ZZ}
\newcommand{\OO}{\mathcal O}
\newcommand{\calP}{\mathcal P}
\newcommand{\m}{\mathfrak m}
\newcommand{\aA}{\mathfrak a}
\newcommand{\p}{\mathfrak p}
\def\Tr{\operatorname{Tr}}
 \def\Gal{\operatorname{Gal}}
\def\End{\operatorname{End}}
\def\ab{{\operatorname{ab}}}
\def\tr{\operatorname{tr}}
\def\Tr{\operatorname{Tr}}
\def\Gal{\operatorname{Gal}}
\def\sl{\mathfrak{sl}}
\def \GL{\operatorname{GL}}  
\def \PGL{\operatorname{PGL}}
\def \SL{\operatorname{SL}}
\def\Aut{\operatorname{Aut}} 
\def\End{\operatorname{End}}
\def\Frob{\operatorname{Frob}}
\def\tr{\operatorname{tr}}
 \def \Aut {\operatorname{Aut}}
\definecolor{purple}{rgb}{1,0,1}
\def\bbar#1{\setbox0=\hbox{$#1$}\dimen0=.2\ht0 \kern\dimen0 
\overline{\kern-\dimen0 #1}}
\newcommand{\Kbar}{\bbar{K}} 
\newcommand{\FFbar}{\overline{\FF}}
\def\sep{{\operatorname{sep}}}
\def\un{{\operatorname{un}}}
\newcommand{\defi}[1]{\textsf{#1}} 
\newtheorem{thm}{Theorem}[section]
\newtheorem{lemma}[thm]{Lemma}
\newtheorem{prop}[thm]{Proposition}
\theoremstyle{definition}
\newtheorem{defn}[thm]{Definition}
\theoremstyle{remark}
\newtheorem{remark}[thm]{Remark}
\newenvironment{romanenum}{\hfill \begin{enumerate} }{\end{enumerate}}
\newenvironment{alphenum}{\hfill \begin{enumerate} }{\end{enumerate}}
\definecolor{webcolor}{rgb}{0.8,0,0.2}
\definecolor{webbrown}{rgb}{.6,0,0}
\begin{document}
\title{Drinfeld modules with maximal Galois action on their torsion points}
\subjclass[2000]{Primary 11G09; Secondary 11F80, 11R58}
\keywords{Drinfeld modules, torsion points, Galois representations}
\author{David Zywina}
\address{Department of Mathematics and Statistics, Queen's University, Kingston, ON  K7L~3N6, Canada}
\email{zywina@mast.queensu.ca}
\urladdr{http://www.mast.queensu.ca/\~{}zywina}

\date{\today}

\begin{abstract}
To each Drinfeld module over a finitely generated field with generic characteristic, one can associate a Galois representation arising from the Galois action on its torsion points.   Recent work of Pink and R\"utsche has described the image of this representation up to commensurability.   Their theorem is qualitative, and the objective of this paper is to complement this theory with a worked out example.   In particular, we give examples of Drinfeld modules of rank 2 for which the Galois action on their torsion points is as large as possible.   We will follow the approach that Serre used to give explicit examples of his openness theorem for elliptic curves.  Using our specific examples, we will numerically test analogues of some well-known elliptic curve conjectures.
\end{abstract}

\maketitle

\section{Introduction}

Let $\FF_q$ be a finite field with $q$ elements.    Let $A$ be the ring $\FF_q[T]$ and let $F$ be its fraction field.  For a given field extension $K$ of $\FF_q$, let $\Kbar$ be an algebraic closure of $K$ and let $K^\sep$ be the separable closure of $K$ in $\Kbar$.  Denote by $G_K=\Gal(K^\sep/K)$ the absolute Galois group of $K$.

\subsection{Drinfeld modules and Galois representations}
We now give enough background in order to state and explain our theorem.  For an in-depth introduction to Drinfeld modules, see~\cite{MR1423131,MR902591,MR0384707}.

Let $K\{\tau\}$ be the ring of skew polynomials; i.e., the ring of polynomials in the indeterminate $\tau$ with coefficients in $K$ that satisfy the commutation rule $\tau\cdot c = c^q \tau$ for $c\in K$.   One can identify $K\{\tau\}$ with a subring of $\End(\GG_{a,\, K})$ by identifying $\tau$ with the Frobenius map $X\mapsto X^q$.    
A \defi{Drinfeld $A$-module} over $K$ is a homomorphism 
\[
\phi\colon A \to K\{\tau\}, \quad a \mapsto \phi_a
\]
of $\FF_q$-algebras whose image is not contained in $K$.     The Drinfeld module $\phi$ is determined by 
$\phi_T = \sum_{i=0}^r a_i \tau^i$ where $a_i\in K$ and $a_r\neq 0$; the positive integer $r$ is called the \defi{rank} of $\phi$.  

Let $\partial_0 \colon K\{\tau\} \to K$ be the ring homomorphism $\sum_{i} a_i \tau^i \mapsto a_0$.    The \defi{characteristic} of $\phi$ is the kernel $\p_0$ of the homomorphism $\partial_0 \circ \phi \colon A \to K$.  If $\p_0$ is the zero ideal, then we say that $\phi$ has \defi{generic characteristic} and we may then view $K$ as an extension of $F$.

The Drinfeld module $\phi$ endows $K^\sep$ with an $A$-module structure, i.e., $a\cdot x = \phi_a(x)$ for $a\in A$ and $x\in K^\sep$.   We shall write ${}^\phi \!K^\sep$ if we wish to emphasize this action.  For a non-zero ideal $\aA$ of $A$, the \defi{$\aA$-torsion} of $\phi$ is
\[
\phi[\aA] :=\{x\in {}^\phi\!K^\sep : a \cdot x = 0 \text{ for all } a\in \aA \} =  \{x \in K^\sep :  \phi_a(x) = 0 \text{ for all } a\in \aA \}.
\]
If $\aA$ is relatively prime to the characteristic $\p_0$, then $\phi[\aA]$ is a free $A/\aA$-module of rank $r$.   

For the rest of the section, assume that $\phi$ has generic characteristic.  The absolute Galois group $G_K$ acts on $\phi[\aA]$ and respects the $A$-module structure.  This action can be re-expressed in terms of a Galois representation
\[
\bbar\rho_{\phi, \aA} \colon G_K \to \Aut(\phi[\aA]) \cong \GL_r(A/\aA).
\]
Let $\p$ be a place of $A$.  If $\phi$ has good reduction at $\p$ and $\p \nmid \aA$, then the representation $\bbar\rho_{\phi,\aA}$ is unramified at $\p$ (one can use this as a definition of good reduction, it will agree with the later definition).

For each non-zero prime ideal $\lambda$ of $A$, we have a Galois representation
\[
\rho_{\phi,\lambda} \colon G_K \to \Aut\Big(\varinjlim_i \phi[\lambda^i]\Big) \cong \GL_r(A_\lambda)
\]
where $A_\lambda$ is the $\lambda$-adic completion of $A$.  These representations have properties similar to the familiar $\ell$-adic representations attached to elliptic curves.   For example,  if $\phi$ has good reduction at $\p \nmid \lambda$, then $P_{\phi,\p}(x):=\det(xI - \rho_{\phi,\lambda}(\Frob_\p))$ is a polynomial with coefficients in $A$ that does not depend on $\lambda$.  In particular, $a_{\p}(\phi):=\tr(\rho_{\phi,\lambda}(\Frob_\p))$ is an element of $A$ that does not depend on $\lambda$.  Combining all the representations together, we obtain a single Galois representation 
\[
\rho_\phi\colon G_K \to \Aut\Big(\varinjlim_\aA \phi[\aA]\Big) \cong \GL_r(\widehat{A})
\]
where $\widehat{A}$ is the profinite completion of $A$.    

\subsection{Open image theorem}
Pink and R\"utsche have described the image of $\rho_\phi$ up to  commensurability \cite{MR2499412}.  For simplicity, we only state the version for which $\phi$ has no extra endomorphisms.  Recall that the ring $\End_{\Kbar}(\phi)$ of \defi{endomorphisms} is the centralizer of $\phi(A)$ in $\Kbar\{\tau\}$.

\begin{thm}[Pink-R\"utsche] \label{T:MT}
Let $\phi$ be a Drinfeld $A$-module of rank $r$ over a finitely generated field $K$.   Assume that $\phi$ has generic characteristic and that $\End_{\Kbar}(\phi)=\phi(A)$.   Then the image of
\[
\rho_\phi \colon G_K \to \GL_r(\widehat{A})
\]
is open in $\GL_r(\widehat{A})$.  Equivalently, $\rho_\phi(G_K)$ has finite index in $\GL_r(\widehat{A})$.   
\end{thm}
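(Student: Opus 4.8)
The plan is to decompose the problem into a "horizontal" statement controlling finitely many primes at once (the adelic openness) and a "vertical" statement at each single prime ($\lambda$-adic openness), and then to glue. First I would reduce to the situation where $K$ is a finite extension of $F$: by a specialization argument one can descend from a finitely generated field to a global function field, using that the endomorphism ring $\End_{\Kbar}(\phi)=\phi(A)$ and the image of $\rho_\phi$ behave well under such specialization (Pink's semisimplicity and the openness for the generic fiber imply openness after suitable specialization). So from now on $K$ is global and $\phi$ has generic characteristic.

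The first main input is the \textbf{$\lambda$-adic openness}: for each nonzero prime $\lambda$ of $A$, the image $\rho_{\phi,\lambda}(G_K)$ is open in $\GL_r(A_\lambda)$. Here I would invoke the Tate-module analogue of Serre's and Pink's results: the Galois action on $V_\lambda(\phi)=\varinjlim\phi[\lambda^i]\otimes F_\lambda$ is absolutely irreducible and its Lie algebra (the Zariski closure $G_\lambda$ of the image in $\GL_{r,F_\lambda}$) contains $\SL_r$ — this is Pink's theorem that, under $\End_{\Kbar}(\phi)=\phi(A)$, the image is "as large as possible" $\lambda$-adically. The determinant $\det\rho_{\phi,\lambda}$ corresponds to the Carlitz (rank-one) situation, where openness is classical; combining, one gets that $G_\lambda=\GL_{r,F_\lambda}$, hence by a standard argument (a $p$-adic/analytic subgroup of $\GL_r(A_\lambda)$ with full Lie algebra is open) the image is open in $\GL_r(A_\lambda)$.

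The second main input, and the genuinely hard part, is the \textbf{surjectivity of $\rho_\phi$ onto an open subgroup for almost all $\lambda$ simultaneously plus independence}. Concretely one wants: (a) for all but finitely many $\lambda$, $\rho_{\phi,\lambda}(G_K)$ contains $\SL_r(A_\lambda)$ — a "reduction mod $\lambda$ is large" statement proved via Pink's results on the images in $\GL_r(A/\lambda)$ being generated by appropriate elements (Frobenius elements supplying elements with suitable eigenvalues, forcing the mod-$\lambda$ image to be all of $\GL_r$ or at least to contain $\SL_r$, for $\lambda$ large); and (b) the images for distinct $\lambda$ are "independent", i.e., the product representation $\prod_\lambda \rho_{\phi,\lambda}$ has open image in $\prod_\lambda \GL_r(A_\lambda)$. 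For (b) I would use a commutator/goursat-style argument: the only obstruction to independence comes from common abelian quotients, and these are controlled by the determinant (the cyclotomic-type character coming from Carlitz), so after accounting for that the images are jointly open. Finally, patch finitely many "bad" primes (where the mod-$\lambda$ image might be small, or where $\phi$ has bad reduction) using the $\lambda$-adic openness from the previous paragraph; since only finitely many primes are bad and each contributes a finite-index constraint, the total image in $\GL_r(\widehat A)=\prod_\lambda\GL_r(A_\lambda)$ still has finite index.

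I expect the main obstacle to be step (a)–(b): proving uniformly over $\lambda$ that the mod-$\lambda$ image is large and that the system is independent. The uniform largeness requires a careful classification of subgroups of $\GL_r(\FF_\lambda)$ (or of $\GL_r(A/\lambda)$) together with an existence result for Frobenius elements realizing prescribed characteristic polynomials — this uses the Chebotarev density theorem in the function-field setting and the fact that the $a_\p(\phi)$ vary enough. Everything else (the $\lambda$-adic Lie-algebra computation, the determinant being the Carlitz character, the specialization reduction) is, by contrast, a matter of assembling known results of Pink, Anderson, and Taguchi and invoking the function-field analogue of the Bogomolov/Serre argument that a closed subgroup with full Lie algebra is open.
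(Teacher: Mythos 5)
Theorem~\ref{T:MT} is not proved in this paper: it is quoted from Pink and R\"utsche \cite{MR2499412} (with the residual input from their companion paper \cite{MR2499411}), so the only comparison available is with their published argument. Your outline does track the broad architecture of that proof (openness at each $\lambda$, largeness of the residual images for almost all $\lambda$, the determinant handled by the rank-one/Carlitz theory, then a gluing step), but two of the steps you treat as routine are precisely where the positive-characteristic setting breaks the elliptic-curve template, and as written they fail.

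First, the ``standard argument that a closed subgroup of $\GL_r(A_\lambda)$ with full Zariski closure (or full Lie algebra) is open'' is false in equal characteristic $p$: for example $\GL_r(\FF_q[[u^p]])$ is a closed, Zariski-dense, infinite-index subgroup of $\GL_r(\FF_q[[u]])$, and Frobenius-twisted images give further counterexamples. There is no Bogomolov/Serre-type Lie-theoretic shortcut here; overcoming exactly this is the point of Pink's theory of compact Zariski-dense subgroups of semisimple groups over local fields of positive characteristic, so your $\lambda$-adic step either needs that machinery or is circular (invoking ``Pink's theorem that the image is as large as possible $\lambda$-adically'' is invoking the statement to be proved at $\lambda$). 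Second, your independence step asserts that the only obstruction to joint openness is a common \emph{abelian} quotient controlled by the determinant. That is wrong: for distinct primes $\lambda_1\neq\lambda_2$ with $\deg\lambda_1=\deg\lambda_2$, the groups $\SL_r(A_{\lambda_1})$ and $\SL_r(A_{\lambda_2})$ share the nonabelian simple quotient $\PSL_r(\FF_{\lambda_1})\cong\PSL_r(\FF_{\lambda_2})$, which is exactly the failure this paper highlights (in contrast with $\SL_2(\ZZ_\ell)$) and which, even in the explicit rank-$2$ example, forces an extra arithmetic input (traces of Frobenius at degree-one primes, via Lemma~\ref{L:Ribet}) to rule out the graph of an exceptional isomorphism. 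In the general theorem this is handled by the group-ring theorem of \cite{MR2499411} (the $A/\aA$-algebra generated by the image is the full matrix algebra), a substantial result that does not follow from ``classification of subgroups plus Chebotarev'' as sketched; the same remark applies to your step (a), the uniform residual largeness. So the proposal is a reasonable map of the known proof, but the two load-bearing steps are not established by the arguments you give.
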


\subsection{An explicit example}
Theorem~\ref{T:MT} is qualitative in nature since it only describes the group $\rho_\phi(G_K)$ up to commensurability (it is unclear from the proof if it is feasible to actually compute the group $\rho_\phi(G_K)$ in general).    Except for the rank one case, which resembles the classical theory of complex multiplication, the author is unaware of any worked out examples in the literature.\\

The main objective of this paper is to compute the image of $\rho_\phi$ for an explicit example.  This example also proves the existence of Drinfeld modules of rank $2$ for which the Galois action on its torsion points is maximal.

\begin{thm}  \label{T:MT example}
Let $q\geq 5$ be an odd prime power.   Let $\varphi \colon \FF_q[T] \to \FF_q(T)\{\tau\}$ be the Drinfeld module of rank $2$ for which
\[
\varphi_T = T + \tau -T^{q-1} \tau^2.
\]   
Then the Galois representation 
\[
\rho_\varphi \colon G_{\FF_q(T)} \to \GL_2\!\big(\widehat{\FF_q[T]}\big)
\]
is surjective.  Moreover, $\rho_\varphi\big(G_{\FFbar_q(T)}\big)=\GL_2\!\big(\widehat{\FF_q[T]}\big)$.
\end{thm}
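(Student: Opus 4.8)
Since $G_{\FFbar_q(T)}\subseteq G_{\FF_q(T)}$, the first assertion is a consequence of the second, so the plan is to prove the stronger geometric statement $\rho_\varphi\big(G_{\FFbar_q(T)}\big)=\GL_2(\widehat A)$, where $A=\FF_q[T]$. Because $\GL_2(\widehat A)=\varprojlim_{\aA}\GL_2(A/\aA)$, it suffices to show that $\bbar\rho_{\varphi,\aA}$ is surjective onto $\GL_2(A/\aA)$ on $G_{\FFbar_q(T)}$ for every nonzero ideal $\aA$. I would hang everything on three features of this particular $\varphi$: (i) $\varphi$ has good reduction at every place of $\FF_q(T)$ other than $T$ and $\infty$, since the coefficients $T$, $1$, $-T^{q-1}$ of $\varphi_T$ are units at every finite place $\ne T$; (ii) at the place $T$ one has $v_T(T)=1$, $v_T(1)=0$, $v_T(-T^{q-1})=q-1$, so $\varphi$ has stable reduction of rank $1$ — a Tate--Drinfeld module — whose period lattice $A\omega$ satisfies $v_T(\omega)=-(q-1)$, an integer prime to $p$ because $q\equiv0\pmod p$; (iii) the determinant of $\rho_\varphi$ is the cyclotomic character $\rho_C$ of the Carlitz module $C$ up to an everywhere-unramified character (the possible twist comes from a $(q-1)$-st root of the leading coefficient $-T^{q-1}$, and $T$ already furnishes one), so on $G_{\FFbar_q(T)}$ we have $\det\rho_\varphi=\rho_C$; and by the theory of cyclotomic function fields (Carlitz, Hayes) the fields $\FF_q(T)(C[\aA])$ have exact constant field $\FF_q$, whence $\det\rho_\varphi\big(G_{\FFbar_q(T)}\big)=\widehat A^{\,*}$.

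\emph{Surjectivity mod $\lambda$.} Fix a monic irreducible $\lambda$ of degree $d$, put $k=A/\lambda\cong\FF_{q^d}$, and let $H=\bbar\rho_{\varphi,\lambda}\big(G_{\FFbar_q(T)}\big)\subseteq\GL_2(k)$. Over the maximal unramified extension of $\FF_q((T))$ the module $\varphi[\lambda]$ is an extension $0\to\psi[\lambda]\to\varphi[\lambda]\to\Lambda\to0$ with $\psi$ a good-reduction rank-one module and $\Lambda$ a free $A/\lambda$-module of rank one coming from the period lattice; inertia $I_T$ acts trivially on $\psi[\lambda]$ and on $\Lambda$, and the Kummer class of the extension is nonzero and of full size precisely because $v_T(\omega)$ is prime to $p$. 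Hence $H$ contains a conjugate of the full unipotent subgroup $U=\{\left(\begin{smallmatrix}1&*\\0&1\end{smallmatrix}\right):*\in k\}$. Since $U$ fixes a unique line, either $H$ lies in the Borel fixing that line, or $H$ acts irreducibly; in the irreducible case $H$ together with $U$ and a conjugate of $U$ fixing a different line generates $\SL_2(k)$, and then (iii) forces $H=\GL_2(k)$ (the exceptional subgroups of $\GL_2(k)$ are excluded because $q\ge5$, so they have order prime to $p$ or coincide with $\SL_2(\FF_5)$). It therefore remains to exclude the reducible case. A stable line $L\subseteq\varphi[\lambda]$ must be the $U$-fixed line, and the characters $\alpha,\beta$ of $G_{\FFbar_q(T)}$ on $L$ and on $\varphi[\lambda]/L$ satisfy $\alpha\beta=\det\rho_{\varphi,\lambda}$, are tame of order dividing $q^d-1$, and by (ii) are unramified above $T$; moreover $\varphi$ being defined over $\FF_q(T)$, $L$ is $G_{\FF_q(T)}$-stable, so $\alpha$ extends to $G_{\FF_q(T)}$, and being unramified outside $\lambda$ and $\infty$ there, its restriction to $G_{\FFbar_q(T)}$ is a power of the Kummer character attached to the monic generator of $\lambda$. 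Comparing the orders of $\alpha,\beta$ at the places over $\lambda$ — where $\bbar\rho_{\varphi,\lambda}|_{I_\lambda}$ has the standard shape of an ordinarily reducing Drinfeld module, with determinant part the tame character of order $q^d-1$ — with the order at $\infty$ — where the Newton polygon of $\varphi_T$ has the single slope $\tfrac{2-q}{q^2-1}$, which is prime to $p$, so $\bbar\rho_{\varphi,\lambda}(I_\infty)$ is cyclic of order $(q^2-1)/\gcd(q-2,3)>q-1$ — produces a numerical contradiction for every $\lambda$. Thus $H=\GL_2(k)$ for all $\lambda$.

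\emph{From $\lambda$ to $\lambda$-adic, and from $\lambda$-adic to $\widehat A$.} Because $q\ge5$ we have $p\ge5$, so $\SL_2(k)$ is perfect, $H^1(\SL_2(k),\sl_2)=0$, and the first congruence subgroup of $\SL_2(A_\lambda)$ has Frattini quotient the irreducible module $\sl_2(k)$; the usual Frattini argument upgrades the previous step to $\rho_{\varphi,\lambda}\big(G_{\FFbar_q(T)}\big)=\GL_2(A_\lambda)$, so in particular $\bbar\rho_{\varphi,\lambda^n}$ is surjective for all $n$. Finally, for $\aA=\prod_i\lambda_i^{e_i}$ with the $\lambda_i$ distinct, $\GL_2(A/\aA)=\prod_i\GL_2(A/\lambda_i^{e_i})$ and $G_{\FFbar_q(T)}$ surjects onto each factor; I would establish independence not by group theory (the factors attached to distinct primes of the same degree are abstractly isomorphic) but by ramification. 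Namely $\FF_q(T)(\varphi[\lambda_i^{e_i}])$ is ramified at $\lambda_i$ and the image under $\bbar\rho_{\varphi,\lambda_i^{e_i}}$ of an inertia group at $\lambda_i$ generates $\GL_2(A/\lambda_i^{e_i})$ as a normal subgroup (its determinant is onto $(A/\lambda_i^{e_i})^{*}$, and mod $\lambda_i$ it contains a non-central semisimple element), whereas by (i) $\FF_q(T)(\varphi[\prod_{j\ne i}\lambda_j^{e_j}])$ is unramified at $\lambda_i$; so these two fields are linearly disjoint over $\FFbar_q(T)$. Induction then gives $\Gal\big(\FFbar_q(T)(\varphi[\aA])/\FFbar_q(T)\big)=\GL_2(A/\aA)$ for all $\aA$, hence $\rho_\varphi\big(G_{\FFbar_q(T)}\big)=\GL_2(\widehat A)$ and, a fortiori, the surjectivity of $\rho_\varphi$ on $G_{\FF_q(T)}$.

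The crux is the exclusion of a stable line in $\varphi[\lambda]$, uniformly in $\lambda$: one must combine the Tate--Drinfeld structure at $T$, the ordinary good-reduction shape of $\bbar\rho_{\varphi,\lambda}$ at $\lambda$, and above all the ramification of $\bbar\rho_{\varphi,\lambda}$ at $\infty$ dictated by the Newton polygon of $\varphi_T$, into a single divisibility obstruction (the cases $d\le 2$ requiring in addition an inspection of $2$- and $3$-adic valuations). The transvection supplied by the bad reduction at $T$ is what makes the rest — excluding the non-split and exceptional cases and propagating to $\widehat A$ — essentially formal.
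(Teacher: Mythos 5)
Your overall architecture (Carlitz determinant, transvections from the Tate--Drinfeld uniformization at $T$, irreducibility, then gluing) parallels the paper, but the step you yourself call ``the crux'' --- excluding a stable line in $\varphi[\lambda]$ --- is not actually carried out, and the ingredient it leans on is unjustified. You assert that $\bbar\rho_{\varphi,\lambda}(I_\infty)$ is cyclic of order $(q^2-1)/\gcd(q-2,3)$ for \emph{every} $\lambda$, reading this off the Newton polygon of $\varphi_T$ at $\infty$. But that polygon only governs the valuations of $T$-torsion, not of $\lambda$-torsion for general $\lambda$; moreover $\infty$ is not a place of $A$ (the constant coefficient $T$ of $\varphi_T$ is not integral at $\infty$), so the stable-reduction/Tate framework you invoke elsewhere does not apply there, and the criterion ``$v(j_\varphi)\ge 0$ implies potentially good reduction'' is a statement about finite places. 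The promised ``numerical contradiction for every $\lambda$'' comparing character orders at $\lambda$ and $\infty$ is never exhibited, and your parenthetical admission that $d\le 2$ needs extra $2$- and $3$-adic inspection signals that the divisibility argument has not been checked even in outline. The paper avoids $\infty$ entirely: assuming reducibility over $G_F$, it shows one of the two characters factors through $\Gal(\FFbar_q/\FF_q)$ (Lemmas~\ref{L:unramified character}--\ref{L:trivial character}), pins the other down via $\det\circ\bbar\rho_{\varphi,\lambda}=\bbar\rho_{C,\lambda}$, and then gets a contradiction from the explicitly computed traces $a_{T-c}(\varphi)=1$ at two distinct degree-one primes $T-c$ (Gekeler's formula; this is where $q\ge 5$ enters). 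Your claim that the restriction of the character cutting out $L$ to $G_{\FFbar_q(T)}$ is a power of the Kummer character of the monic generator of $\lambda$ is also unproved (over $\FFbar_q$ the point $\lambda$ splits into $\deg\lambda$ points, so the tame geometric characters ramified only there and at $\infty$ form a much bigger group), as is your assumption that the reduction at $\lambda$ has height $1$ (``ordinary'').

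Your replacement for the paper's independence step also has a gap. You want linear disjointness of $\FFbar_q(T)(\varphi[\lambda_i^{e_i}])$ from the compositum of the others via ramification at $\lambda_i$, which requires that the image of geometric inertia at $\lambda_i$ \emph{normally generates} $\GL_2(A/\lambda_i^{e_i})$; your justification (full determinant plus a non-central element mod $\lambda_i$) only settles this modulo $\lambda_i$, not at level $\lambda_i^{e_i}$, where $\GL_2(A/\lambda_i^{e_i})\to\GL_2(\FF_{\lambda_i})$ splits and a normal subgroup with full determinant surjecting mod $\lambda_i$ need not a priori be everything without a further congruence-filtration argument. (You must also take care to split off a prime different from $(T)$ at each stage, since the torsion fields for the remaining primes are ramified at $T$.) This ramification route is genuinely different from the paper --- which handles the isomorphic-groups problem via Goursat, the Dieudonn\'e--Hua description of automorphisms of $\PGL_2$ over finite fields, and the invariant $\tr(\Frob_\p)^2/\det(\Frob_\p)=(T-c)^{-1}$ at degree-one primes (Lemma~\ref{L:Ribet}) --- and it could likely be repaired, but as written both it and, more seriously, the irreducibility step leave genuine gaps.
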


We will use our example to investigate various conjectures in \S\ref{S:conjectures}.

\subsection{Elliptic curves}
We now discuss the analogous theory of elliptic curves which strongly influences the proof of Theorem~\ref{T:MT} and the methods of this paper.  Let $E$ be an elliptic curve defined over a number field $K$.   For each positive integer $m$, the Galois action on the $m$-torsion points $E[m]$ of $E(K^\sep)$ gives a representation
\[
\bbar\rho_{E,m} \colon G_K \to \Aut(E[m])\cong \GL_2(\ZZ/m\ZZ).
\]
Let $\p$ be a non-zero prime ideal for which $E$ has good reduction, then there is a unique polynomial $P_{E,\p}(x)=x^2 - a_\p(E) x +N(\p) \in \ZZ[x]$ such that $P_{E,\p}(x) \equiv \det(I - \bbar\rho_{E,m}(\Frob_\p)) \pmod{m}$ when $\p\nmid m$.   Combining the representations $\bbar\rho_{E,m}$ together, we obtain a single Galois representation
\[
\rho_E \colon G_K \to \Aut\Big(\varinjlim_m E[m]\Big)\cong \GL_2(\Zhat).
\]
In 1972, Serre \cite{MR0387283} showed that if $\End_{\Kbar}(E)\cong \ZZ$, then $\rho_E$ is open in $\GL_2(\Zhat)$; this is a clear analogue of Theorem~\ref{T:MT}.  Earlier, Serre had shown that $\rho_{E}$ has open image if $E$ has non-integral $j$-invariant (cf.~\S3.2 of \cite{MR0263823}*{Chapter~IV}).  What makes the non-integral $j$-invariant case easier is that, using the theory of \emph{Tate curves}, one can show that the image of $\bbar\rho_{E,\ell}\colon G_K \to \GL_2(\ZZ/\ell\ZZ)$ contains an element of order $\ell$ for all but finitely many primes $\ell$ (cf.~the proposition of \cite{MR0263823}*{Chapter~IV Appendix A.1.5}).    Serre has given worked out examples of $\rho_E(G_K)$ for several non-CM elliptic curves over $K=\QQ$ (cf.~  \cite{MR0387283}*{\S5.5}).   The first example with surjective $\rho_E$ was given by A.~Greicius \cite{MR2778661}.

\subsection{Overview}

In \S\ref{S:conjectures}, which is independent of the rest of the paper, we mention three well-known conjectures for elliptic curves.   The heuristics and resulting predictions for these conjectures depend on the Galois action on the curve's torsion points.  Having worked out the Galois image for a specific Drinfeld module of rank $2$, we can now start doing numerical experiments on the analogous Drinfeld module statements.

The remainder of the paper is dedicated to the proof of Theorem~\ref{T:MT example}.

In \S\ref{S:the determinant}, we prove that the character $\det\circ \rho_\varphi \colon G_F \to \widehat{A}^\times$ is surjective.  This will be accomplished by first recognizing that $\det\circ \rho_\varphi$ is	 the representation $\rho_C$ associated with the Carlitz module $C$; this particular Drinfeld module has been extensively studied.

In \S\ref{S:The Drinfeld-Tate uniformization}, we shall recall the \emph{Tate uniformization} of a Drinfeld module (this is the analogue of the usual Tate uniformization of elliptic curves over non-archimedean local fields).    We can then apply this theory to our Drinfeld module $\varphi$ at the place $(T)$ where it has bad and stable reduction.  The main application of the section is that for every non-zero prime ideal $\lambda$ of $A$, $\bbar\rho_{\varphi,\lambda}(G_F)$ contains a $p$-Sylow subgroup of $\GL_2(A/\lambda)$ where $p$ is the prime dividing $q$.

In \S\ref{S:irreducibility}, we prove that $\varphi[\lambda]$ is an irreducible $\FF_{\lambda}[G_F]$-module for all $\lambda$.  If $\varphi[\lambda]$ is reducible, then we can understand the semi-simplification of the action of $G_F$ on $\varphi[\lambda]$ in terms of two characters $\chi,\chi' \colon G_F \to \FF_\lambda^\times$.  Using our knowledge of $\bbar\rho_{\varphi,\lambda}$, we will describe the possibilities for the pair $\{\chi,\chi'\}$, and then derive a contradiction based upon traces of Frobenii.

Having shown that the image of $\rho_E(G_F)$ is ``large'' in these three different contexts, we will then combine them to prove surjectivity. That the residual representations $\bbar\rho_{\varphi,\lambda}$ are surjective, will follow quickly.   In contrast to the elliptic curve situation, it takes some serious work to prove that these representations are independent.   For elliptic curves, one makes use of the easy fact that the groups $\SL_2(\ZZ_\ell)$ have no common quotients; this fails for the groups $\SL_2(A_\lambda)$ (just consider $\lambda$ with the same degree).  The required group theory is contained in a short appendix.

\subsection*{Notation}

We fix throughout an odd prime power $q\geq 5$.  We let $A$ be the ring $\FF_q[T]$ and we let $F$ be the fraction field $\FF_q(T)$.  For a positive integer $n$, let $\pi_A(n)$ be the number of monic irreducible polynomials of degree $n$ in $A$.

We will usually denote a non-zero prime ideal of $A$ by $\lambda$, which we will also call a \defi{finite place of $F$.}   Since $A=\FF_q[T]$ is a PID, we will occasionally identify $\lambda$ with its monic irreducible generator.    We shall denote the residue field $A/\lambda$ by $\FF_\lambda$.  Let $A_\lambda$ and $F_\lambda$ be the $\lambda$-adic completion of $A$ and $F$, respectively.  For each non-zero ideal $\aA\subseteq A$, denote the cardinality of $A/\aA$ by $N(\aA)$.  

\section{Some conjectures} \label{S:conjectures}

Throughout this section, let $E$ be a non-CM elliptic curve over $\QQ.$   For each prime $p$ of good reduction, we have a finite abelian group $E(\FF_p)$.  Several number theoretic conjectures deal with the asymptotics of primes $p$ for which $E(\FF_p)$ has a fixed property.   In \S\ref{SS:cyclic}, we discuss a conjecture that predicts the distribution of $p$ for which $E(\FF_p)$ is cyclic.    In \S\ref{SS:Koblitz Drinfeld}, we discuss a conjecture of Koblitz that predicts the distribution of $p$ for which $E(\FF_p)$ has prime cardinality.   (These two conjectures are interesting in part because of public key cryptography where it is useful to have a point in $E(\FF_p)$ that generates a group of large prime order.)   Finally in \S\ref{SS:Lang Trotter}, we mention a conjecture of Lang and Trotter on the number of primes $p$ for which $a_p(E)$ takes a fixed value $t$.   The precise Drinfeld module analogue for the Lang-Trotter conjecture is the least clear.  We will not venture a full conjecture here, but hope to return to it in the future. \\

Before continuing, we recall a little more about the arithmetic of Drinfeld modules.   For simplicity, we restrict our attention to a Drinfeld module
\[
\phi\colon A\to F\{\tau\}
\]
of rank 2 such that $\partial_0\circ \phi \colon A\to F$ is the inclusion map and $\End_{\bbar{F}}(\phi)=\phi(A)$;  our numerical data will all be for the Drinfeld module $\varphi$ of Theorem~\ref{T:MT example} with $q=5$.    Let $\p$ be a non-zero prime ideal of $A$ for which $\phi$ has good reduction; we will sometimes identify $\p$ with its monic irreducible generator.  Reduction mod $\p$ induces a Drinfeld module $\phi_\p \colon A \to \FF_\p\{\tau\}$ where $\FF_\p=A/\p A$.   We shall denote by ${}^\phi\FF_\p$, the group $\FF_\p$ equipped with the $A$-module action coming from $\phi_\p$.   Note that as an $A$-module, ${}^\phi\FF_\p$ need not be isomorphic to $A/\p A$.  For conjectures concerning the structure of the groups $E(\FF_p)$, we will instead consider the structure of the $A$-modules  ${}^\phi\FF_\p$.   As an $A$-module ${}^\phi\FF_\p$ is isomorphic to 
\[
A/d_\p A \times A/d_\p e_\p A 
\]
where $d_\p$ and $e_\p$ are unique monic polynomials in $A$.  The \defi{Euler-Poincar\'e characteristic} of ${}^\phi\FF_\p$ is the ideal
\[
\chi_\phi(\p) = d_\p^2 e_\p A.
\]
We can relate the Euler characteristic to our Galois representations by noting that $\chi_\phi(\p)$ is the ideal of $A$ generated by $P_{\phi,\p}(1)$.  For conjectures about the orders $|E(\FF_p)|=P_{E,p}(1)$, the analogous object of study is $\chi_\phi(\p)$.

Let $D_\phi$ be the index of $\rho_\phi(G_{F\FFbar_q})$ in $\rho_\phi(G_F)$; it is known to be finite.   By Theorem~\ref{T:MT example}, we have $D_\varphi=1$.  Restriction gives an exact sequence
\[
1 \to G_{F\FFbar_q} \hookrightarrow G_F \to \Gal(\FFbar_{q}/\FF_q)\xrightarrow{\overset{\deg}{\sim}} \Zhat \to 1
\]
where $\deg(\Frob_\p)$ equals the degree of $\p$ for irreducibles $\p$ of $A$.  For an integer $n$ and a non-zero ideal $\aA$ of $A$, we define $\bbar\rho_{\phi,\aA}(G_F)_n$ to be the image under $\bbar\rho_{\phi,\aA}$ of $\{\sigma \in G_F : \deg(\sigma)\equiv n \pmod{D_\phi}\}$.   

\subsection{Cyclicity of reductions modulo $\p$} \label{SS:cyclic}
For primes $p$ of good reduction, the group $E(\FF_p)$ is isomorphic to $\ZZ/d_p\ZZ \times \ZZ/d_pe_p\ZZ$ for unique positive integers $d_p$ and $e_p$.   It is natural to ask how often $E(\FF_p)$ is cyclic ($d_p=1$), that is, describe the asymptotics of the function
\[
f_E(x) := \#\{p\leq x : E(\FF_p) \text{ is cyclic} \}.
\]
Serre \cite{Serre-resume1977-1978} showed that, assuming the Generalized Riemann Hypothesis (GRH), one has
\begin{equation} \label{E:cyclic equation}
f_E(x) \sim c_E \frac{x}{\log x}
\end{equation}
as $x\to + \infty$, where $c_E := \sum_{m\geq 1} \mu(m)/[\QQ(E[m]):\QQ]$ (see \S5 of \cite{MR698163} for a proof and \S6 for an unconditional result in the CM case).  If $c_E=0$, then we interpret this as meaning that $f_E(x)$ is uniformly bounded.  It is still unknown if (\ref{E:cyclic equation}) holds unconditionally.  If $c_E>0$, then Murty and Gupta \cite{MR1055716} showed that $f_E(x) \gg_E x/\log^2 x$, and the constant $c_E$ is positive if and only if $\QQ(E[2])\neq\QQ$.  For more background and progress on the conjecture, see \cite{MR2099195}.\\

The Drinfeld module analogue has been formulated and proven by W.~Kuo and Y.-R.~Liu \cite{MR2559117}.    We shall say that ${}^\phi\FF_\p$ is \defi{cyclic} if it is isomorphic as an $A$-module to $A/wA$ for some non-constant element $w$ of $A$; equivalently, $d_\p=1$.   For each positive integer $n$, we let $f_\phi(n)$ be the number of monic irreducible polynomials $\p$ of degree $n$ in $A$ for which ${}^\phi\FF_\p$ is cyclic.  Kuo and Liu have shown that
\[
\lim_{n\to + \infty} \frac{f_\phi(n)}{\pi_A(n)} = c_\phi(n)
\]
where $c_\phi(n)$ is an explicit constant whose value depends only on $n$ modulo $D_\phi$ (recall that in the function field setting the analogue of GRH is known to hold).  They also give a good bound on the error term, and prove a similar result for rank $\geq 3$.   Theorem~\ref{T:MT} plays a vital role in their proof.\\

Let us now consider our Drinfeld module $\varphi\colon A\to F\{\tau\}$.  Since $D_\varphi=1$, we can simply write $c_\varphi$ for the constant.  The constant given in \cite{MR2559117} is
\[
c_\varphi= \sum_{m \in A \text{ monic}} \frac{\mu_q(m)}{[F(\varphi[m]):F]}
\]
where $\mu_q$ is the M\"obius function for $A$ (i.e., the multiplicative function that vanishes on polynomials with a multiple irreducible factor and is $- 1$ for irreducible polynomials).  By Theorem~\ref{T:MT example}, we find that ${\mu_q(m)}/{[F(\varphi[m]):F]}={\mu_q(m)}/{|\!\GL_2(A/mA)|}$ is a multiplicative function, hence
\[
c_\varphi= \prod_{\substack{\lambda \in A\\\text{monic irreducible}}} \Big(1 - \frac{1}{|\!\GL_2(A/\lambda A)|} \Big) 
= \prod_{d=1}^\infty \Big(1 - \frac{1}{q^d(q^d-1)^2(q^d+1)} \Big)^{\pi_A(d)}.
\]
This expression for $c_\varphi$ is easy to estimate (note that $\pi_A(d)$ equals $\frac{1}{d} \sum_{e|d} \mu(e) q^{d/e}$).

For $q=5$, we find that $c_\varphi=0.989600049329883\ldots$.  In Table 1, we see that, even for small $n$, the ratio $f_\varphi(n)/\pi_A(n)$ is well approximated by $c_\varphi$.
\begin{table}[htdp] 
\begin{center}\begin{tabular}{c|c|c|c} $n$ & $f_\varphi(n)$ & $\pi_A(n)$ & $f_\varphi(n)/\pi_A(n)$ \\\hline\hline  
2 & 10 & 10 & 1 \\\hline
3 & 40 & 40 & 1 \\\hline
4 & 150 & 150 & 1 \\\hline
5 & 618 & 624 & $0.99038461\ldots$ 
\\\hline
 6 & 2554 & 2580 & $0.98992248\ldots$ 
 \\\hline 7 & 11069 & 11160 & $0.99184587\ldots$ 
 \\\hline 8 & 48270 & 48750 & $0.99015384\ldots$ 
 \\\hline 9 & 214807 & 217000 & $0.98989400\ldots$ 
 \\\hline 10 & 966135 & 976248 & $0.98964095\ldots$ 
 \\\hline 11 & 4392845 & 4438920 & $0.98962022\ldots$ 
 \\ \end{tabular} \caption{Values of $f_\varphi(n)$ for small $n$ where $\varphi\colon \FF_5[T] \to \FF_5(T)\{\tau\}$ is the Drinfeld module for which $T\mapsto  T + \tau - T^{4}\tau^2$.}
\end{center}
\label{Table:cyclic}
\end{table}


\subsection{Koblitz conjecture} \label{SS:Koblitz Drinfeld}
Let $\calP_E(x)$ be the number of primes $p\leq x$ of good reduction for which $|E(\FF_p)|$ is prime. Koblitz conjectured that there is a constant $C_E$ such that $\calP_E(x) \sim C_E \dfrac{x}{(\log x)^2}$ as $x\to +\infty,$ where $C_E\geq 0$ is an explicit constant.   The constant, as revised in \cite{Zywina-Koblitz}, is
\[
C_E = \lim_{m\to + \infty} \dfrac{|\{ g \in \bbar\rho_{E,m}(G_\QQ) : \det(I-g) \in (\ZZ/m\ZZ)^\times \}|/|\bbar\rho_{E,m}(G_\QQ)|}{\prod_{p|m}(1-1/p)}
\]
where $m$ runs over positive square-free integers ordered by divisibility.\\

Let $\calP_\phi(n)$ be the number of monic irreducible polynomials $\p$ of degree $n$ in $A$ for which $\chi_\phi(\p)$ is a prime ideal (equivalently, ${}^\phi\FF_\p$ is a simple $A$-module). We conjecture that
\begin{equation} \label{E:Koblitz Drinfeld}
\calP_\phi(n) \sim C_{\phi}(n) \frac{q^n}{n^2}
\end{equation}
as $n\to+ \infty$, where 
\[
C_\phi(n) = \lim_{m} \dfrac{|\{ g \in \bbar\rho_{\phi,m}(G_F)_n : \det(I-g) \in (A/mA)^\times \}|/|\bbar\rho_{\phi,m}(G_F)_n |}{\prod_{\lambda|m,\, \lambda \text{ prime}} (1- 1/N(\lambda))}
\]
and the $m$ run over monic polynomials of $A$ ordered by divisibility.  The constant $C_\phi(n)$ is well-defined (one uses Theorem~\ref{T:MT} to check convergence) and depends only on the value of $n$ modulo $D_\phi$.  Analogues of Koblitz's conjecture for Drinfeld modules were first investigated in the Master's thesis of L.~Jain \cite{Jain-thesis2008}. \\

Let us give a crude heuristic for this conjecture.   A ``random'' monic polynomial of degree $n$ in $A$ should be irreducible with probability $\pi_A(n)/q^n \approx 1/n$.      So for each irreducible polynomial $\p$ of degree $n$, we expect the degree $n$ monic generator of $\chi_\phi(\p)$ to be irreducible with probability $1/n$.   Thus a naive estimate for $\calP_\phi(n)$ is $\pi_A(n)/n\approx q^n/n^2$.     What this heuristic is ignoring is that the Euler characteristics $\chi_A(\p)$ are not random ideals when it comes to congruences.     Fix a monic polynomial $m$ in $A$.  Then the ratio of irreducible polynomials $\p$ of degree $n$ for which $m$ is relatively prime to $\chi_\phi(\p)$ is approximately $|\{ g \in \bbar\rho_{\phi,m}(G_F)_n : \det(I-g) \in (A/mA)^\times \}|/|\bbar\rho_{\phi,m}(G_F)_n|$, while the naive ratio is $|(A/mA)^\times|/|A/mA| = \prod_{\lambda|m}(1- 1/N(\lambda))$.    So by taking into account congruences modulo $m$, we expect
\[
\dfrac{|\{ g \in \bbar\rho_{\phi,m}(G_F)_n : \det(I-g) \in (A/mA)^\times \}|/|\bbar\rho_{\phi,m}(G_F)_n|}{\prod_{\lambda|m,\, \lambda \text{ prime}} (1- 1/N(\lambda))}\, \frac{q^n}{n^2}
\]
to be a better estimate for $\calP_{\phi}(n)$.   The conjecture (\ref{E:Koblitz Drinfeld}) arises by letting $m$ run over more and more divisible elements of $A$ (it actually suffices to consider only squarefree $m$).\\

Now consider our Drinfeld module $\varphi$.  Since $D_\varphi=1$, we can simply write $C_\varphi$ for the constant.  Using Theorem~\ref{T:MT example}, one can compute as in \cite{Zywina-Koblitz} and show that
\begin{align*}
C_\varphi &=  \prod_\lambda  \dfrac{|\{ g \in \GL_2(A/\lambda)  : \det(I-g) \in (A/\lambda A)^\times \}|/|\GL_2(A/\lambda A)|}{ 1- 1/N(\lambda)}\\
&= \prod_{d\geq 1} \Big( 1 - \frac{q^{2d}-q^d - 1}{(q^d-1)^3(q^d+1)} \Big)^{\pi_A(d)}.
\end{align*}
Taking $q=5$, we find that $C_\varphi = 0.76075227630\ldots$.  
In Table 2, we numerically test our analogue of Koblitz's conjecture for small $n$.

\begin{table}[htdp] 
\begin{center}\begin{tabular}{c|c|c} $n$ & $P_\varphi(n)$ & $P_\varphi(n)/\Big(\frac{5^n}{n^2}\Big)$ \\\hline\hline  
2 & 5  & 0.8 \\\hline
3 & 10 & 0.72  \\\hline
4 & 41  & 1.0496  \\\hline
5 & 106  & 0.848  \\\hline
 6 & 317  & 0.730368  \\\hline 
 7 & 1194 & 0.7488768  \\\hline 
 8 & 4540 & 0.7438336  \\\hline 
 9 & 18534 &  0.768642048 \\\hline 
 10 & 74724 &  0.76517376  \\\hline
 11 & 307931 & 0.76308013056 \\
\end{tabular} \caption{Numerical evidence for the Koblitz conjecture for the Drinfeld module $\varphi\colon \FF_5[T] \to \FF_5(T)\{\tau\}$, $T\mapsto  T + \tau - T^{4}\tau^2$.}
\end{center}
\label{Table:Koblitz}
\end{table}

\subsection{Lang-Trotter} \label{SS:Lang Trotter}
Fix a positive integer $t$.   In \cite{MR0568299}*{Part I}, Lang and Trotter  conjectured that
\[
|\{ p \leq x: a_p(E) = t \}| \sim C_{E,t} \frac{\sqrt{x}}{\log x}
\]
as $x\to \infty$, where $C_{E,t}\geq 0$ is an explicit constant.  Again if $C_{E,t}=0$, we interpret the asymptotic as meaning that there are only finitely many primes $p$ such that $a_p(E)=t$.   Their predicted constant is
\[
C_{E,t} = \frac{2}{\pi} \, \lim_{m\to \infty} \frac{|\{g \in \bbar\rho_{E,m}(G_\QQ): \tr(g)\equiv t \pmod{m}\}|/| \bbar\rho_{E,m}(G_\QQ)|}{1/m}
\]
where the limit is over positive integers $m$ ordered by divisibility.    Note that the value of $C_{E,t}$ depends only on the image of the representation $\rho_E\colon G_\QQ \to \GL_2(\Zhat)$.  In particular, their  conjecture implies that there are infinitely many primes $p$ for which $a_p(E)=t$ if and only if if there are \emph{no congruence obstructions} (i.e., for each $m$, there is one, and hence infinitely many, primes $p$ of good reduction such that $a_p(E)\equiv t \pmod{m}$).\\

Now consider the Drinfeld module analogue.  Fix an element $t\in A$.  Let $\calP_{\phi,t}(n)$ be the number of monic irreducible polynomials of degree $n$ in $A$ for which $a_\p(\phi)= t$.   We conjecture that there is a positive integer $M_\phi$ such that
\begin{equation} \label{E:LT Drinfeld}
\calP_{\phi,t}(n) \sim C_{\phi,t}(n)  \frac{q^{n/2}}{n}
\end{equation}
as $n\to +\infty$, where $C_{\phi,t}(n)$ are constants whose value depends only on $n$ modulo $M_\phi$.  As we will see, the conjecture is in general false with $M_\phi=D_\phi$.  This conjecture has been proven ``on average'' for $t=0$ by C.~David \cite{MR1373559}, and the main theorem there suggests that $M_\phi=2$ for a ``random'' $\phi$.  \\

Consider our Drinfeld module $\varphi \colon \FF_q[T] \to \FF_{q}(T)\{\tau\}$ from Theorem~\ref{T:MT example}.  The following proposition shows that when $q$ is congruent to $1$ modulo $4$ and $n$ is a sufficiently large \emph{even} integer, we will always have $C_{\varphi,t}(n)=0$.   

\begin{prop} \label{P:obstruction}
Assume that $q\equiv 1 \pmod{4}$.  If $n$ is an even integer and $\p\neq (T)$ is an irreducible polynomial in $A$ of degree $n$, then $a_\p(t)$ has degree $n/2$.
\end{prop}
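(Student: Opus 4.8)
The plan is to combine the determinant computation of \S\ref{S:the determinant} with the basic structure theory of Drinfeld modules over finite fields, exploiting the fact that the Frobenius ``constant term'' of $\varphi$ is pinned down exactly by the Carlitz module. Since $\p\neq(T)$, the leading coefficient $-T^{q-1}$ of $\varphi_T$ is a unit at $\p$, so $\varphi$ has good reduction there; write $P\in A$ for the monic irreducible generator of $\p$ (of degree $n$) and let $\varphi_\p\colon A\to\FF_\p\{\tau\}$ be the reduction, a rank~$2$ Drinfeld module over $\FF_\p$ of characteristic $\p$. Its Frobenius characteristic polynomial is
\[
P_{\varphi,\p}(x)=x^2-a_\p(\varphi)\,x+b_\p\in A[x],
\]
and the Weil bound for Drinfeld modules (due to Drinfeld) gives $\deg b_\p=n$ and $\deg a_\p(\varphi)\leq n/2$; so it suffices to rule out $\deg a_\p(\varphi)<n/2$. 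The first step would be to identify $b_\p$ precisely: for any finite place $\lambda\neq\p$ one has $b_\p=\det\rho_{\varphi,\lambda}(\Frob_\p)$, which by \S\ref{S:the determinant} equals $\rho_{C,\lambda}(\Frob_\p)$ for the Carlitz module $C$; since $\Frob_\p$ acts on the Carlitz torsion through multiplication by the monic generator $P$ of $\p$, and $b_\p$ and $P$ both lie in $A\subseteq A_\lambda$, this forces $b_\p=P$.

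Now I would argue by contradiction: suppose $\deg a_\p(\varphi)<n/2$. Then $2\deg a_\p(\varphi)<n$, so the discriminant
\[
\delta:=a_\p(\varphi)^2-4b_\p=a_\p(\varphi)^2-4P
\]
has degree exactly $n$ with leading coefficient $-4$. As $n$ is even and, by the hypothesis $q\equiv1\pmod{4}$, the element $-4=(-1)\cdot 2^2$ is a square in $\FF_q$, the place $\infty$ of $F$ splits in $F(\sqrt{\delta})$: if $g\in A$ has even degree and square leading coefficient then $\sqrt{g}\in F_\infty$ by Hensel's lemma (here $q$ is odd). Also $F(\sqrt{\delta})/F$ is a genuine quadratic extension --- otherwise the Frobenius $\pi$ of $\varphi_\p$ would be a root of $P_{\varphi,\p}$ lying in the integrally closed ring $A$, and $P=\pi\cdot(a_\p(\varphi)-\pi)$ would then be a nontrivial factorization of $P$, contradicting its irreducibility. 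Hence $F(\pi)=F(\sqrt{\delta})$ is a quadratic extension of $F$ in which $\infty$ splits.

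Finally I would invoke the ``imaginary'' nature of endomorphism algebras of Drinfeld modules over finite fields. Since $\pi\in\End_{\FFbar_\p}(\varphi_\p)$, the field $F(\pi)$ is a quadratic subfield of $D:=\End_{\FFbar_\p}(\varphi_\p)\otimes_A F$; for a rank~$2$ Drinfeld module $D$ is either an imaginary quadratic extension of $F$, which then equals $F(\pi)$ and has $\infty$ non-split, or a quaternion division algebra ramified precisely at the two places $\p$ and $\infty$, in which case every quadratic subfield of $D$ --- in particular $F(\pi)$ --- is non-split at the ramified place $\infty$. Either way $\infty$ does not split in $F(\pi)$, the desired contradiction; hence $\deg a_\p(\varphi)\geq n/2$, and with the Weil bound this gives $\deg a_\p(\varphi)=n/2$. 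The decisive input is the identification $\det\circ\rho_\varphi=\rho_C$ of \S\ref{S:the determinant}, which is what makes $b_\p$ the \emph{monic} generator of $\p$ and not merely some $\FF_q^\times$-multiple of it; after that the argument is just the ``ordinary versus supersingular'' dichotomy repackaged through the splitting behaviour of $\infty$, and the point I expect to need the most care is treating the supersingular case on exactly the same footing as the ordinary one in this last step.
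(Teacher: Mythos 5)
Your proof is, as far as I can tell, correct, but it takes a genuinely different and more structural route than the paper's argument. The paper's proof is a short, self-contained computation: it invokes Gekeler's closed-form expression $a_{n/2}=\Tr_{\FF_{q^2}/\FF_q}\bigl(N_{L/\FF_{q^2}}(-\bbar T^{q-1})^{-1}\bigr)$ for the top coefficient of $a_\p(\varphi)$, observes that the element $\alpha$ inside the trace has $N_{\FF_{q^2}/\FF_q}(\alpha)=1$, and shows directly that $\Tr(\alpha)\ne0$ because $x^2+1$ splits in $\FF_q$ when $q\equiv1\pmod4$. Your approach instead identifies $b_\p=P$ exactly (correctly noting that the Carlitz identification of \S\ref{S:the determinant} is needed to pin down the $\FF_q^\times$-factor), argues via the Weil bound that if $\deg a_\p<n/2$ the discriminant $\delta=a_\p^2-4P$ would have degree $n$ with square leading coefficient $-4$ and hence would be an $\infty$-adic square, and then derives a contradiction from the ``imaginary'' constraint on $F(\pi)$: in the ordinary case $F(\pi)=\End^0_{\FFbar_\p}(\varphi_\p)$ has a unique place over $\infty$, and in the supersingular case $F(\pi)$ embeds in a quaternion algebra ramified at $\infty$, so either way $\infty$ cannot split in $F(\pi)=F(\sqrt\delta)$. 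This argument is conceptually closer to the remark immediately following the proposition (that the obstruction ``arises from the infinite place'' and is explained by the Sato--Tate law), and it yields a bit more than the paper's proof --- it makes visible that for $q\equiv1\pmod4$ there are no supersingular primes of even degree. The trade-off is that it leans on two nontrivial pieces of structure theory of Drinfeld modules over finite fields (the dichotomy for $\End^0_{\FFbar_\p}$, with the precise ramification set $\{\p,\infty\}$ in the quaternion case and the imaginarity of $F(\pi)$ in the ordinary case, plus the local embedding criterion for quadratic subfields of a quaternion algebra) which would need to be cited carefully (Gekeler's work on finite Drinfeld modules, e.g.\ \cite{MR2366959} \S2); the paper's proof is more elementary in that it reduces everything to a one-line field identity. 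Two small points worth tightening: you should make explicit that $\pi$ integral over $A$ and lying in $F$ forces $\pi\in A$ (you implicitly use $A$ integrally closed, which is fine but deserves a word), and in the supersingular branch you should either note that the case $F(\pi)=F$ has already been excluded (so $F(\pi)$ is automatically a maximal subfield of the quaternion algebra) or cite the embedding criterion precisely.
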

\begin{proof}
Let $L$ be the field $\FF_q[T]/\p$.  Since $n$ is even, $L$ contains a quadratic extension $\FF_{q^2}$ of $\FF_q$.  By Hasse's bound, we know that $a_\p(\varphi) = \sum_{i=1}^{n/2} a_i T^i$ with $a_i\in \FF_q$.  We need to check that $a_{n/2}\neq 0$.  By \cite{MR2366959}*{Proposition~2.14(i)} or \cite{MR1781330}*{Theorem~5.1},
 \[
 a_{n/2} = \Tr_{\FF_{q^2}/\FF_q}\big( N_{L/\FF_{q^2}}(-\bbar{T}^{q-1})^{-1}\big)=\pm \Tr_{\FF_{q^2}/\FF_q}\big( N_{L/\FF_{q^2}}(\bbar{T}^{q-1})^{-1}\big).
 \]
 Let $\alpha:=N_{L/\FF_{q^2}}(\bbar{T}^{q-1})^{-1} \in \FF_{q^2}^\times$.   Assume that $a_{n/2}=\Tr_{\FF_{q^2}/\FF_q}(\alpha)$ is equal to $0$; we shall derive a contradiction. We have $N_{\FF_{q^2}/\FF_q}(\alpha) = (N_{L/\FF_q}(\bbar{T})^{q-1})^{-1} = 1^{-1} =1$, so $\alpha$ is a root of the polynomial
 \[
 x^2-\Tr_{\FF_{q^2}/\FF_q}(\alpha)x + N_{\FF_{q^2}/\FF_q}(\alpha) = x^2 +1.
 \]
 Since $q\equiv 1 \pmod{4}$, we deduce that $\alpha \in \FF_q^\times$ and hence $\Tr_{\FF_{q^2}/\FF_q}(\alpha) = 2\alpha \neq 0$.
\end{proof}

When $q\equiv 1 \pmod{4}$, the same obstruction to $a_\p(\varphi)=t$ does not occur when we restrict to $\p$ of {odd} degree.   Moreover, there are infinitely many $\p$, with necessarily odd degree, for which $a_\p(\varphi)=0$ (this follows from Theorem~1.1.6 of \cite{MR1185592} with the correction mentioned in \S2 of \cite{MR1606391}).   In fact, it is not unreasonable to expect that $C_{\varphi,t}(n)>0$ whenever $n$ is odd.

What makes Proposition~\ref{P:obstruction} surprising, at least in contrast to the elliptic curve case, is that it cannot be explained by congruence obstructions.   Indeed, the elements $\rho_\varphi(\Frob_\p)$ for $\p$ of \emph{even} degree are dense in $\rho_\varphi\big(G_{\FF_{q^2}(T)}\big) = \GL_2(\widehat{A})$, so for each non-zero ideal $\aA$ of $A$ there are infinitely many $\p$ of even degree for which $a_\p(\varphi)\equiv t\pmod{\aA}$.  The obstruction to $a_\p(\varphi)=t$ for $\p$ of large even degree arises from the infinite place!

In the elliptic curve setting, there is an archimedean component of Lang and Trotter's heuristic which is played by the Sato-Tate law of $E$.   It is responsible for the innocuous factor $2/\pi$ occurring in the constant $C_{E,t}$, and in particular it never gives an obstruction to $a_p(E)=t$ (at least not for large enough $p$).   J.-K.~Yu has proved that there is an analogue of Sato-Tate for Drinfeld modules \cite{MR2018826}, and the obstruction of Proposition~\ref{P:obstruction} can be deduced from it.    The Sato-Tate law has been described in \cite{Zywina-SatoTate}, and the author hopes to state a general Lang-Trotter conjecture in future work.\\

It is hard to give convincing numerical evidence for (\ref{E:LT Drinfeld}), at least compared to that supplied in \S\ref{SS:Koblitz Drinfeld} and \S\ref{SS:cyclic}, since $q^{n/2}/n$ grows much slower than $\pi_A(n) \approx q^n/n$.  For example, some computations show that 
\[
\calP_{\varphi,0}(9)=84,\; \calP_{\varphi,0}(11)=359, \; \calP_{\varphi,1}(9)=62,\; \calP_{\varphi,1}(11)=272,\;  \calP_{\varphi,T}(9)=62,\; \calP_{\varphi,T}(11)=259,
\]
which is compatible with the conjectural asymptotic (\ref{E:LT Drinfeld}) with $M_\varphi=2$ and $C_{\phi,0}(1) \approx 0.5$, $C_{\phi,1}(1) \approx 0.4$, $C_{\phi,T}(1) \approx 0.4$.

\begin{remark}
C.~David and A.C.~Cojocaru proved the upper bound $\calP_{\phi,t}(n) \ll_{\phi} q^{\frac{4}{5}n}/n^{1/5}$ in \cite{MR2441247} (in that paper, they were mainly considering the Drinfeld module analogue of another conjecture of Lang and Trotter).  A key ingredient for their bound was the rank 2 case of Theorem~\ref{T:MT}.    In 
\cite{Zywina-SatoTate}, the Sato-Tate law for $\phi$ is used to prove the upper bound $\calP_{\phi,t}(n) \ll_{\phi,t} q^{\frac{3}{4}n}$.
\end{remark}



%
%
%





\section{The determinant of $\rho_\varphi$}
\label{S:the determinant}
\subsection{The Carlitz module}
\label{SS:Carlitz}

The \defi{Carlitz module} is the Drinfeld module $C \colon A \to F\{\tau\}$ for which $C_T= T +\tau$.  

\begin{prop}[Hayes \cite{MR0330106}] \label{P:Hayes}
For every non-zero ideal $\aA$ of $A$, the representation 
\[
\bbar\rho_{C,\aA} \colon G_F \to \Aut(C[\aA])=(A/\aA)^\times
\]
is surjective.   The representation $\bbar\rho_{C,\aA}$ is unramified at all finite places of $F$ not dividing $\aA$, and for each monic irreducible polynomial $\p$ of $A$ not dividing $\aA$, we have  $\bbar\rho_{C,\aA}(\Frob_\p) \equiv \p \; \bmod{\aA}$.
\end{prop}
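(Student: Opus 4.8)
The plan is to run, for the Carlitz module, the exact analogue of the classical description of $\Gal(\QQ(\zeta_m)/\QQ)$: identify the Galois action on $C[\aA]$ with the "mod $\aA$ Artin symbol", and then read off surjectivity almost formally. Since $A=\FF_q[T]$ is a PID, write $\aA=(f)$ with $f$ monic. The polynomial $C_f(X)\in A[X]$ is $\FF_q$-linear, monic in $X$ of degree $q^{\deg f}=N(f)$, and has derivative the constant $f\neq 0$ (all higher terms $X^{q^i}$, $i\ge 1$, differentiate to $0$); hence $C_f$ is separable and $C[f]=\{x\in F^{\sep}:C_f(x)=0\}$ is free of rank $1$ over $A/fA$. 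Each $C_a$ has coefficients in $F$, so the $G_F$-action on $C[f]$ is $A/fA$-linear, giving $\bbar\rho_{C,f}\colon G_F\to\Aut_{A/fA}(C[f])=(A/fA)^{\times}$, with $L:=F(C[f])$ a finite abelian extension of $F$.

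For the unramifiedness statement: at every finite place $\p$, the reduction of $C_T=T+\tau$ is $\bbar T+\tau\in(A/\p)\{\tau\}$, which is still a Drinfeld module of rank $1$, so $C$ has good reduction at every finite place; by the unramifiedness criterion recalled in the introduction, $\bbar\rho_{C,f}$ (equivalently $L/F$) is unramified at every finite $\p\nmid f$. (Ramification at the infinite place is irrelevant to the statement.)

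The heart of the argument is the Frobenius formula. Fix a monic irreducible $\p\nmid f$, put $d=\deg\p$, and pick a place $\mathfrak P$ of $L$ over $\p$. I first claim $\tau^{d}=\bbar C_\p$ in $(A/\p)\{\tau\}$. Indeed $\tau^{d}$ commutes with $\bbar C_T=\bbar T+\tau$ because $\bbar T\in\FF_\p=\FF_{q^{d}}$ satisfies $\bbar T^{q^{d}}=\bbar T$; hence $\tau^{d}\in\End_{\overline{\FF_\p}}(\bbar C)$. A Drinfeld module of rank $1$ over a field has no endomorphisms beyond $\bbar C(A)$, so $\tau^{d}=\bbar C_a$ for a unique $a\in A$; comparing $\tau$-degrees gives $\deg a=d$, the constant ($\tau^0$) coefficient of $\bbar C_a$ is the image of $a$ in $A/\p$ so $\p\mid a$, and comparing leading $\tau$-coefficients forces $a$ monic, whence $a=\p$. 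Since $C$ has good reduction at $\p$ and $\p\nmid f$, reduction modulo $\mathfrak P$ is a $G_F$-equivariant bijection $C[f]\xrightarrow{\ \sim\ }\bbar C[f]$, and $\Frob_\p$ acts on $\bbar C[f]\subset\overline{\FF_\p}$ as $x\mapsto x^{q^{d}}=\tau^{d}(x)=\bbar C_\p(x)$. Comparing with the definition of $\bbar\rho_{C,f}$ gives $\bbar\rho_{C,f}(\Frob_\p)\equiv\p\pmod f$.

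Surjectivity is then essentially free. Let $u\in(A/fA)^{\times}$ (we may assume $\deg f\ge 1$); choose a representative $p$ with $\deg p<\deg f$, so that $g:=p+f$ is monic of degree $\deg f$, still $\equiv u\pmod f$, hence coprime to $f$. Factor $g=\p_1\cdots\p_k$ into monic irreducibles; each $\p_j\mid g$ so $\p_j\nmid f$, and by the previous paragraph $u=\prod_j(\p_j\bmod f)=\prod_j\bbar\rho_{C,f}(\Frob_{\p_j})$ lies in the (automatically subgroup) image of $\bbar\rho_{C,f}$. Hence $\bbar\rho_{C,f}$ is surjective. I expect the Frobenius identification to be the only substantive point — everything else is formal — and it rests squarely on the input that a rank-$1$ Drinfeld module over a field has no extra endomorphisms (equivalently, on the classical lemma $C_\p\equiv\tau^{\deg\p}\pmod\p$). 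An alternative to the last paragraph, closer to Hayes' original treatment, is to show directly that $\p$ is totally ramified of degree $N(\p)^{n-1}(N(\p)-1)=|(A/\p^{n})^{\times}|$ in $F(C[\p^{n}])/F$ (mirroring $\QQ(\zeta_{p^{n}})/\QQ$) and to combine prime-power levels; the factorization trick above sidesteps that computation.
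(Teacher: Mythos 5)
Your argument is correct, but note that the paper does not prove this proposition at all: it is quoted as a known result of Hayes \cite{MR0330106}, so there is no internal proof to compare against. Your route is the standard ``cyclotomic function field'' argument, and all the key steps check out: separability of $C_f(X)$ because its $X$-derivative is the nonzero constant $f$; good reduction of $C$ at every finite place, giving unramifiedness away from $\aA$; the identity $\tau^{\deg\p}=\bbar{C}_\p$ in $\FF_\p\{\tau\}$, which you deduce correctly from the fact that a rank-one Drinfeld module over any $A$-field has endomorphism ring exactly $\phi(A)$ (justified, e.g., by the faithful rank-one Tate module at a place $\lambda\neq\p$, or by the elementary coefficient recursion, or by citing the classical congruence $C_\p\equiv\tau^{\deg\p}\pmod{\p}$); and the reduction bijection on $\aA$-torsion, which combined with the residue Frobenius gives $\bbar\rho_{C,\aA}(\Frob_\p)\equiv\p\pmod{\aA}$. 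Your surjectivity step is a nice shortcut: since the image is a subgroup and every unit class mod $f$ is represented by a monic polynomial of degree $\deg f$ coprime to $f$, hence by a product of monic irreducibles not dividing $f$, surjectivity follows formally from the Frobenius formula, with no need for an analogue of Dirichlet's theorem or of the irreducibility/total ramification analysis of the Carlitz ``cyclotomic'' extensions $F(C[\p^n])/F$ that Hayes carries out (and which yields the finer information about conductors and the infinite place that Hayes' explicit class field theory requires, but which the proposition as stated does not need). Two small points worth making explicit if you write this up: the trivial case $\aA=A$ should be set aside before choosing representatives of degree less than $\deg f$, and the $G_F$-equivariant bijectivity of reduction on $C[f]$ deserves the one-line justification that a nonzero torsion point cannot reduce to $0$ because $C_f(X)/X$ has constant term $f\not\equiv 0\pmod{\p}$, after which counting against the separable polynomial $\bbar{C}_f$ finishes it.
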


In particular, the proposition implies that $\rho_C \colon G_F \to \GL_1(\widehat{A})=\widehat{A}^\times$ is surjective; this gives a rank one example of Theorem~\ref{T:MT}.    

\subsection{The determinant} \label{SS:det}
Let $\p$ be a monic irreducible polynomial of $A$ different from $T$.   For any non-zero ideal $\aA$ of $A$ relatively prime to $\p$,  we know that 
\[
\det(xI-\bbar\rho_{\varphi,\aA}(\Frob_\p)) \equiv x^2 -a_\p(\varphi)x + \epsilon_\p(\varphi)\p \; \pmod{\aA}
\]
for $a_\p(\varphi)\in A$ and $\epsilon_\p(\varphi)\in \FF_q^\times$ that do not depend on $\aA$.  

We can explicitly compute $\epsilon_\p(\varphi)$:  Let $\bbar{T}$ be the image of $T$ in $\FF_\p$.   By Theorem~2.11 of \cite{MR2366959} (with $L=\FF_\p$) we have 
\[
\epsilon_\p(\varphi) = (-1)^{\deg \p} N_{\FF_\p/\FF_q}(-\bbar{T}^{q-1})^{-1} =  (N_{\FF_\p/\FF_q}(\bbar{T})^{q-1})^{-1} = 1,
\]
where the last equality uses that $N_{\FF_\p/\FF_q}(\bbar{T})\neq 0$ since $\p\neq T$.    Thus $(\det\circ \bbar{\rho}_{\varphi,\aA})(\Frob_\p) \equiv \p \equiv \bbar{\rho}_{C,\aA}(\Frob_\p) \bmod{\aA}$.  Using the Chebotarev density theorem, we find that the characters $\det \circ \bbar\rho_{\varphi,\aA} \colon G_F \to (A/\aA)^\times$ and $\bbar\rho_{C,\aA} \colon G_F \to (A/\aA)^\times$ are the same.  

\begin{prop} \label{P:determinant}
The representation $\det \circ \bbar\rho_{\varphi,\aA} \colon G_F \to (A/\aA)^\times$ equals $\bbar\rho_{C,\aA}$, and hence satisfies the properties of Proposition~\ref{P:Hayes}.
\end{prop}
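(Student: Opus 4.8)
The plan is to compare the characters $\det\circ\bbar\rho_{\varphi,\aA}$ and $\bbar\rho_{C,\aA}$ of $G_F$ on Frobenius elements and then conclude with the Chebotarev density theorem; in fact all of the real content has already appeared in the discussion preceding the statement, so the proof amounts to assembling it cleanly.

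First I would recall from the general theory in the introduction that for every monic irreducible $\p$ of $A$ with $\p\neq T$ and $\p\nmid\aA$ one has $\det(xI-\bbar\rho_{\varphi,\aA}(\Frob_\p))\equiv x^2-a_\p(\varphi)x+\epsilon_\p(\varphi)\p\pmod{\aA}$ with $a_\p(\varphi)\in A$ and $\epsilon_\p(\varphi)\in\FF_q^\times$ independent of $\aA$ (being the coefficients of the polynomial $P_{\varphi,\p}\in A[x]$). The one genuine computation is that $\epsilon_\p(\varphi)=1$: by Theorem~2.11 of \cite{MR2366959} applied with $L=\FF_\p$ we get $\epsilon_\p(\varphi)=(-1)^{\deg\p}\,N_{\FF_\p/\FF_q}(-\bbar{T}^{q-1})^{-1}$, and since the norm from the degree-$\deg\p$ extension $\FF_\p/\FF_q$ satisfies $N_{\FF_\p/\FF_q}(-y)=(-1)^{\deg\p}N_{\FF_\p/\FF_q}(y)$, this reduces to $N_{\FF_\p/\FF_q}(\bbar{T})^{-(q-1)}$; because $\p\neq T$ forces $\bbar{T}\in\FF_\p^\times$, the element $N_{\FF_\p/\FF_q}(\bbar{T})$ lies in $\FF_q^\times$, so its $(q-1)$-st power is $1$ and $\epsilon_\p(\varphi)=1$.

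It then follows that $(\det\circ\bbar\rho_{\varphi,\aA})(\Frob_\p)\equiv\p\pmod{\aA}$ for every monic irreducible $\p\nmid\aA$ with $\p\neq T$, and by Proposition~\ref{P:Hayes} this is precisely $\bbar\rho_{C,\aA}(\Frob_\p)$. Hence the two continuous characters $\det\circ\bbar\rho_{\varphi,\aA}$ and $\bbar\rho_{C,\aA}$ from $G_F$ to $(A/\aA)^\times$ agree on $\Frob_\p$ for all but finitely many finite places $\p$; the product character $(\det\circ\bbar\rho_{\varphi,\aA})\cdot\bbar\rho_{C,\aA}^{-1}$ has open, hence closed, kernel, and this kernel contains a set of Frobenius classes that is dense in $G_F$ by the Chebotarev density theorem, so it is all of $G_F$. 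Therefore the two characters coincide, and the stated properties (surjectivity, ramification only at places dividing $\aA$, and $\Frob_\p\mapsto\p$) are inherited from Proposition~\ref{P:Hayes}. The only nontrivial ingredient is the formula for $\epsilon_\p(\varphi)$ quoted from \cite{MR2366959}, so I anticipate no real obstacle.
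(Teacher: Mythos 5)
Your proposal is correct and follows essentially the same route as the paper: compute $\epsilon_\p(\varphi)=1$ via Theorem~2.11 of Gekeler (using $\p\neq T$ so that $N_{\FF_\p/\FF_q}(\bbar{T})\in\FF_q^\times$), conclude $(\det\circ\bbar\rho_{\varphi,\aA})(\Frob_\p)\equiv\p\equiv\bbar\rho_{C,\aA}(\Frob_\p)\pmod{\aA}$, and finish with Chebotarev. Your slightly more detailed justification of the norm sign and of the density argument is fine and changes nothing of substance.
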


\begin{remark}
That $\det \circ \rho_\varphi = \rho_C$ is not a surprising coincidence.  In the category of $T$-motives it makes sense to take the ``determinant'' of $\varphi$ which gives a rank one Drinfeld $A$-module defined by $T\mapsto T + T^{q-1} \tau$, and this is isomorphic to $C$ over $F$.
\end{remark}

\section{The Drinfeld-Tate uniformization}
\label{S:The Drinfeld-Tate uniformization}

We now fix some notation that will hold for the rest of the section.  Let $\OO$ be a complete discrete valuation ring containing $A$, $\m\subset \OO$ the maximal ideal, $K$ the field of fractions of $\OO$, and $K^\sep$ a separable closure of $K$.    Let $v \colon K^\times \twoheadrightarrow \ZZ$ be the associated discrete valuation (we will also denoted by $v$ the corresponding $\QQ$-valued extension of $v$ to $K^\sep$).   Let $I_K$ be the inertia subgroup of $G_K$ and let $K^\un$ be the maximal unramified extension of $K$ in $K^\sep$.  We will return to our specific Drinfeld module $\varphi$ in \S\ref{SS:Tate example}.

\subsection{Stable reduction}

Let $\phi\colon A \to K\{\tau\}$ be a Drinfeld module of rank $r$.  We say that $\phi$ has \defi{stable reduction} if there exists a Drinfeld module $\phi'\colon A \to \OO\{\tau\}$ such that $\phi'$ and $\phi$ are isomorphic over $K$ and the reduction of $\phi'$ modulo $\m$ is a Drinfeld module (equivalently, the degree of the reduction of $\phi'_T$ is greater that $1$).  We say that $\phi$ has \defi{stable reduction of rank $r_1$} if it has stable reduction and the rank of $\phi'$ modulo $\m$ is $r_1$.  We say that $\phi$ has \defi{good reduction} if it has stable reduction of rank $r$.   Every Drinfeld $A$-module over $K$ has \defi{potentially stable reduction} (i.e., has stable reduction after possibly replacing $K$ by a finite separable extension).

If $\phi\colon A \to K\{\tau\}$ is a Drinfeld module of rank $2$, then the \defi{$j$-invariant} of $\phi$ is defined to be $j_\phi = g^{q+1}/\Delta$ where $\phi_T = T + g \tau + \Delta \tau^2$.  Two Drinfeld $A$-modules over $K$ of rank $2$ have the same $j$-invariant if and only if they are isomorphic over $\bbar{K}$.  The Drinfeld module $\phi$ has {potentially good reduction} if and only if $v(j_\phi)\geq 0$; cf.~\cite{MR2020270}*{Lemma~5.2}.

\subsection{Image of inertia at places of stable bad reduction}

\begin{prop} \label{P:main Tate} 
Let $\phi \colon A \to K\{\tau\}$ be a Drinfeld module of rank $2$ and of generic characteristic that has stable reduction of rank $1$.  Let $\aA$ be a non-zero proper ideal of $A$.  
\begin{romanenum}
\item \label{I:Tate i} There is a basis of $\phi[\aA]$ over $A/\aA$ such that for $\bbar\rho_{\phi,\aA} \colon G_K \to \Aut(\phi[\aA])\cong \GL_2(A/\aA)$ we have
\[
\bbar\rho_{\phi,\aA}(I_K) \subseteq \bigg\{ \left(\begin{array}{cc} 1 & b \\0 & c\end{array}\right) : b\in A/\aA,\, c\in \FF_q^\times \bigg\}.
\]
\item \label{I:Tate ii}  Let $e_\phi$ be the order of $\dfrac{v(j_\phi)}{(q-1)N(\aA)}+\ZZ$ in $\QQ/\ZZ$.
Then $\#\bbar\rho_{\phi,\aA}(I_K)\geq e_\phi.$
\end{romanenum}
\end{prop}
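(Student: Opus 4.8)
The plan is to invoke the Tate uniformization of $\phi$ (the function-field analogue of the Tate uniformization of elliptic curves). Since $\phi$ has stable reduction of rank $1$ over $K$, there is a rank-$1$ Drinfeld $A$-module $\psi\colon A\to\OO\{\tau\}$ with good reduction, a rank-$1$ $A$-lattice $\Lambda\subset{}^\psi\!K^\sep$ stable under $G_K$, and a $G_K$-equivariant $\FF_q$-linear analytic surjection $e_\Lambda\colon{}^\psi\!K^\sep\to{}^\phi\!K^\sep$ with $\ker e_\Lambda=\Lambda$, given by the convergent product $e_\Lambda(z)=z\prod_{0\neq\mu\in\Lambda}(1-z/\mu)$. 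Fix a generator $\omega$ of the $A$-module $\Lambda$, so $\Lambda=\{\psi_a(\omega):a\in A\}$. The uniformization also supplies the relation $v(j_\phi)=(q-1)\,v(\omega)$, and discreteness of $\Lambda$ forces $v(\omega)<0$.

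I would first record the $\aA$-division exact sequence. Let $f$ be a monic generator of $\aA$. Applying the snake lemma to multiplication by $f$ (i.e.\ to $\psi_f$, resp.\ $\phi_f$) on $0\to\Lambda\to{}^\psi\!K^\sep\xrightarrow{e_\Lambda}{}^\phi\!K^\sep\to0$, and using that $\psi_f\colon K^\sep\to K^\sep$ is surjective (it is a nonzero separable additive polynomial) and that $\omega$ is not $\psi$-torsion, one gets a short exact sequence of $(A/\aA)[G_K]$-modules
\[
0\longrightarrow\psi[\aA]\xrightarrow{\;e_\Lambda\;}\phi[\aA]\xrightarrow{\;\pi\;}\Lambda/\aA\Lambda\longrightarrow0,
\]
with $\psi[\aA]$ free of rank $1$ over $A/\aA$ (as $\psi$ has rank $1$) and $\Lambda/\aA\Lambda\cong A/\aA$ (as $\Lambda\cong A$). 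Pick a basis $\{x_1,x_2\}$ of $\phi[\aA]$ with $x_1$ generating $e_\Lambda(\psi[\aA])$ and $\pi(x_2)$ generating $\Lambda/\aA\Lambda$; then $\bbar\rho_{\phi,\aA}$ is upper-triangular in this basis. For (\ref{I:Tate i}): $G_K$ acts on $\Lambda$ through $\Aut_A(\Lambda)=A^\times=\FF_q^\times$, so $\sigma(\omega)=\chi(\sigma)\,\omega$ for a character $\chi\colon G_K\to\FF_q^\times$, and hence $G_K$ acts on $\Lambda/\aA\Lambda$ by the scalar $\chi(\sigma)$; moreover $\psi$ has good reduction, so $\bbar\rho_{\psi,\aA}$ is unramified and $I_K$ acts trivially on $\psi[\aA]$. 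Thus for $\sigma\in I_K$ we get $\bbar\rho_{\phi,\aA}(\sigma)=\left(\begin{smallmatrix}1&b(\sigma)\\0&\chi(\sigma)\end{smallmatrix}\right)$ with $b(\sigma)\in A/\aA$ and $\chi(\sigma)\in\FF_q^\times$, which is (\ref{I:Tate i}).

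For (\ref{I:Tate ii}) the goal is to produce ramification in $L:=K(\phi[\aA])$. As $L$ is the fixed field of $\ker\bbar\rho_{\phi,\aA}$ and the image of $I_K$ in $\Gal(L/K)$ is the inertia subgroup, we have $\#\bbar\rho_{\phi,\aA}(I_K)\ge e(L/K)$. Choose $z_0\in K^\sep$ with $\psi_f(z_0)=\omega$ and set $x:=e_\Lambda(z_0)\in\phi[\aA]$; by the description of the connecting map, $\pi(x)=\omega\bmod\aA\Lambda$ generates $\Lambda/\aA\Lambda$, so $x\in L$ and $x$ is an admissible choice of $x_2$. Now I compute $v(x)$. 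Since $v(\omega)<0$ and every coefficient of $\psi_f$ has non-negative valuation with the leading one a unit (good reduction of $\psi$), the Newton polygon of $\psi_f(Z)-\omega$ consists of the single segment joining $(0,v(\omega))$ to $(N(\aA),0)$, so every root — in particular $z_0$ — satisfies $v(z_0)=v(\omega)/N(\aA)$. The lattice point of largest valuation being $\omega$ (precisely, the $c\omega$ with $c\in\FF_q^\times$), each factor $1-z_0/\mu$ in $e_\Lambda(z_0)=z_0\prod_{0\neq\mu\in\Lambda}(1-z_0/\mu)$ has valuation $0$, whence $v(x)=v(z_0)=v(\omega)/N(\aA)$. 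Since $v$ takes values in $\tfrac1e\ZZ$ on $L$ with $e=e(L/K)$, the integer $e$ is divisible by the order of $\frac{v(\omega)}{N(\aA)}+\ZZ$ in $\QQ/\ZZ$; by the relation $v(j_\phi)=(q-1)\,v(\omega)$ this order equals $e_\phi$. Therefore $\#\bbar\rho_{\phi,\aA}(I_K)\ge e(L/K)\ge e_\phi$.

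The step I expect to be the main obstacle is establishing the precise relation $v(j_\phi)=(q-1)\,v(\omega)$ — that is, normalizing the uniformization data so the factor is exactly $q-1$ and matching the sign — together with the valuation bookkeeping for the infinite product $e_\Lambda(z_0)$: one must check convergence, verify that no factor $1-z_0/\mu$ contributes to the valuation, and confirm that $z_0$ genuinely has valuation $v(\omega)/N(\aA)$ rather than a preimage of smaller valuation. Everything else is formal once the Tate uniformization, together with that relation, has been recorded.
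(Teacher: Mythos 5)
Your proposal is correct and follows essentially the same route as the paper: both invoke the Drinfeld--Tate uniformization, read off the inertia action in a basis adapted to the filtration $\psi[\aA]\hookrightarrow\phi[\aA]\twoheadrightarrow\Lambda/\aA\Lambda$ (trivial inertia on the unramified rank-one piece $\psi[\aA]$, scalar $\FF_q^\times$-action on the lattice quotient), and then bound ramification via the valuation identity $v(z_0)=v(\omega)/N(\aA)$ combined with $v(\omega)=v(j_\phi)/(q-1)$. The one small difference is in part~(\ref{I:Tate ii}): you bound the ramification by computing the valuation of the actual torsion point $x=e_\Lambda(z_0)\in\phi[\aA]$, which requires verifying that every factor $1-z_0/\mu$ of the exponential product has valuation zero (true, since $v(z_0)=v(\omega)/N(\aA)>v(\omega)\geq v(\mu)$ for all nonzero $\mu\in\Lambda$), whereas the paper instead works with the preimage $z_0$ itself, observing $K^\un(\phi[\aA])=K^\un(z_0)$ because $\psi[\aA]\subseteq K^\un$ --- both routes read off the same fractional part of the valuation and hence the same lower bound $e_\phi$ on $\#\bbar\rho_{\phi,\aA}(I_K)$.
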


The proof of Proposition~\ref{P:main Tate} will be given in \S\ref{SS:main Tate proof}.

\subsection{Drinfeld-Tate uniformization} \label{SS:DT uniformization}

Let $\psi\colon A \to \OO\{\tau\}$ be a Drinfeld module.  A \defi{$\psi$-lattice} is a finitely generated projective $A$-submodule $\Gamma$ of ${}^\psi\!K^\sep$ that is discrete and is stable under the action of $G_K$.    By discrete we mean that any disk of finite radius in $K^\sep$, with respect to the valuation $v$, contains only finitely many elements of $\Gamma$.  

\begin{defn}  A \defi{Tate datum} over $\OO$ is a pair $(\psi,\Gamma)$, where $\psi$ is a Drinfeld module over $\OO$ and $\Gamma$ is a $\psi$-lattice.  We say that two pairs $(\psi,\Gamma)$ and $(\psi',\Gamma')$ of Tate datum are \defi{isomorphic} if there is an isomorphism from $\psi$ to $\psi'$ such that the corresponding homomorphism ${}^\psi\!K^\sep\to {}^{\psi'}\!K^\sep$ of $A$-modules induces an isomorphism between $\Gamma$ and $\Gamma'$.
\end{defn}

\begin{prop}[Drinfeld \cite{MR0384707}*{\S7}]   \label{P:Tate uniformization}
Let $r_1$ and $r_2$ be positive integers.  There is a natural bijection between the following:
\begin{alphenum}
\item \label{I:Tate uniformization i} 
the set of $K$-isomorphism classes of Drinfeld modules $\phi\colon A \to K\{\tau\}$ of rank $r:=r_1+r_2$ with stable reduction of rank $r_1$;
\item \label{I:Tate uniformization ii} 
the set of $K$-isomorphism classes of Tate datum $(\psi,\Gamma)$ where $\psi\colon A \to \OO\{\tau\}$ is a Drinfeld module of rank $r_1$ with good reduction and $\Gamma$ is a $\psi$-module of rank $r_2$.
\end{alphenum}
\end{prop}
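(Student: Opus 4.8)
The plan is to realize the claimed bijection as an \emph{analytic uniformization}: to a Tate datum one attaches the quotient of a good-reduction Drinfeld module by a lattice, and conversely one uniformizes a stably-reducing $\phi$ by its good-reduction ``core''. This is Drinfeld's theorem, so I only indicate the shape of the argument. Throughout write $\CC_K$ for the completion of $\Kbar$, a complete, algebraically closed field on which $G_K$ acts; discreteness of a $\psi$-lattice is a condition on valuations, so it persists when the lattice is viewed inside $\CC_K$.

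To pass from a Tate datum $(\psi,\Gamma)$, with $\psi$ of rank $r_1$ and $\Gamma$ a $\psi$-lattice of rank $r_2$, to a Drinfeld module, I would form
\[
e_\Gamma(z)=z\prod_{0\neq\gamma\in\Gamma}\Big(1-\frac z\gamma\Big),
\]
which converges to an entire function on $\CC_K$ exactly because $\Gamma$ is discrete. Since $\Gamma$ is an $\FF_q$-vector space, $e_\Gamma$ is $\FF_q$-linear, i.e. a twisted power series $\sum_{i\geq0}e_i\tau^i$ with $e_0=1$; it is surjective onto $\CC_K$ with kernel precisely $\Gamma$. Because $\Gamma$ is an $A$-submodule of ${}^\psi\!\CC_K$ we have $\psi_a(\Gamma)\subseteq\Gamma$ for every $a\in A$, so $e_\Gamma\circ\psi_a$ vanishes on $\Gamma$ and therefore factors as $\phi_a\circ e_\Gamma$ for a unique $\phi_a\in\CC_K\{\tau\}$; the assignment $a\mapsto\phi_a$ is an $\FF_q$-algebra homomorphism, and the functional equation $e_\Gamma\circ\psi_a=\phi_a\circ e_\Gamma$ forces $\rank\phi=\rank\psi+\rank_A\Gamma=r_1+r_2$. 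The construction is $G_K$-equivariant and $\Gamma$ is $G_K$-stable, so $\phi$ is defined over $K$; and since $\Gamma$ is discrete (and infinite once $r_2\geq1$) the coefficients $e_i$ tend to $0$, so after replacing $\phi$ by a suitable $K$-isomorphic $\phi^c\colon a\mapsto c\,\phi_a\,c^{-1}$ its coefficients lie in $\OO$ and reduce to $\bar\psi$ — that is, $\phi$ has stable reduction of rank $r_1$.

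The reverse direction — from $\phi$ to a Tate datum — is the crux. Given $\phi\colon A\to K\{\tau\}$ of rank $r=r_1+r_2$ with stable reduction of rank $r_1$, normalize so that $\phi\colon A\to\OO\{\tau\}$ with $\bar\phi$ a Drinfeld module of rank $r_1$. By a contraction-mapping argument in the twisted power series ring $\OO\{\{\tau\}\}$ — peeling off the ``unit part'' of $\phi_T$ and iterating, which is where the stable-reduction hypothesis is used — one produces a Drinfeld module $\psi\colon A\to\OO\{\tau\}$ of rank $r_1$ with good reduction together with an $\FF_q$-linear, $G_K$-equivariant surjection $e_\phi\colon{}^\psi\!\CC_K\to{}^\phi\!\CC_K$ with $\tau^0$-coefficient $1$ and satisfying $e_\phi\circ\psi_a=\phi_a\circ e_\phi$. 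One then checks that $\Gamma:=\ker e_\phi$ lies in ${}^\psi\!K^\sep$, is discrete and $G_K$-stable, and is projective of rank exactly $r_2$; the rank count comes from comparing $\lambda$-adic Tate modules, where $T_\lambda\psi$ sits inside $T_\lambda\phi$ as a rank-$r_1$ sub-$G_K$-module and $\Gamma\otimes_A A_\lambda$ maps isomorphically onto the (unramified) quotient.

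Finally, both assignments are governed by a single object — an $\FF_q$-linear surjection with discrete kernel intertwining $\psi$ and $\phi$ and normalized at the origin — which is determined by its kernel (it must equal $e_\Gamma$), while $\phi$ is recovered from $(\psi,\Gamma)$ through the functional equation; hence the two constructions are mutually inverse. Compatibility with $K$-isomorphisms, and thus well-definedness on isomorphism classes, is immediate: an isomorphism $\phi\to\phi_1$ over $K$ is multiplication by some $c\in K^\times$, carrying the Tate datum $(\psi,\Gamma)$ to the isomorphic datum $(\psi,c\Gamma)$, and conversely. The one genuinely hard step is the direction producing the good-reduction core $\psi$ and the uniformizing map $e_\phi$ out of $\phi$, and then proving $\ker e_\phi$ is an honest lattice of rank $r_2$ (discreteness, finite generation, projectivity); the remaining verifications are formal manipulations of twisted power series together with Galois descent.
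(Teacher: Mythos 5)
Your sketch follows exactly the route the paper itself takes in \S\ref{SS:DT uniformization}: for this statement the paper gives no proof of its own but cites Drinfeld \cite{MR0384707}*{\S7} and Lehmkuhl \cite{MR2488548}*{Chapter~4 \S3}, and then merely describes the two directions of the correspondence --- the exponential $e_\Gamma$ intertwining $\psi$ and $\phi$, and conversely the contraction-mapping production of $(\psi,u)$ with $\Gamma=\ker u$ --- which is precisely what you write, and you honestly flag as deferred the same hard steps the paper defers. Two tiny imprecisions worth noting: no rescaling is needed in the forward direction (since $e_\Gamma\equiv\tau^0\bmod\m$ and $\psi$ has $\OO$-coefficients, the resulting $\phi$ is automatically over $\OO$), and the rank count $r=r_1+r_2$ is cleanest from the exact sequence $1\to\psi[a]\to\psi_a^{-1}(\Gamma)/\Gamma\to\Gamma/a\Gamma\to1$ rather than "the functional equation forces it."
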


The proposition is not very meaningful as stated; we shall now give a brief description of the implied correspondence.   This correspondence is called the \defi{Drinfeld-Tate uniformization}; see \cite{MR2488548}*{Chapter~4 \S3} for a detailed description and proof.  \\

We start with a Drinfeld module $\psi\colon A \to \OO\{\tau\}$ of rank $r_1$ with good reduction and a $\psi$-lattice $\Gamma$ of rank $r_2$.  Define the power series
\[
e_\Gamma(X) = X \prod_{\gamma \in \Gamma, \, \gamma\neq0} \left( 1 - \frac{X}{\gamma} \right) \in \OO[[X]],
\]
it is $\FF_q$-linear with an infinite radius of convergence and satisfies $e_\Gamma(X)\equiv X \bmod{\m}$; the discreteness of $\Gamma$ is key here.  We may then view $e_\Gamma$ as an element of $\OO\{\!\{\tau\}\!\}$; the (non-commutative) ring of formal power series in $\tau$ with coefficients in $\OO$.  There exists a unique Drinfeld $A$-module $\phi$ over $\OO$ such that $e_\Gamma \psi_a=\phi_a e_\Gamma$ holds for all $a\in A$.    This is the desired Drinfeld module $\phi$; it has rank $r_1+r_2$ with stable reduction of rank $r_1$.  That $\phi$ has stable reduction of rank $r_1$ is clear since $\phi_T\equiv \phi_T e_\Gamma = e_\Gamma \psi_T \equiv \psi_T \bmod{\m}$ and $\psi$ has good reduction.\\

In the other direction, start with a Drinfeld $A$-module $\phi$ of rank $r:=r_1+r_2$  over $K$ which has stable reduction of rank $r_1$.    After possibly replacing $\phi$ with a $K$-isomorphic Drinfeld module, we may assume that $\phi$ takes values in $\OO\{\tau\}$.   There exists a unique Drinfeld module $\psi\colon  A\to \OO\{\tau\}$ of of rank $r_1$ and a unique element
$u= \tau^0 + \sum_{i=1}^\infty a_i \tau^i \in \OO\{\!\{\tau\}\!\}$
with $a_i\in \m$ and $|a_i|\to 0$, such that 
\begin{equation} \label{E:Tate Uniformization 1}
u \psi_a=\phi_au
\end{equation}
for all $a\in A$.  Drinfeld shows that $u$ defines an analytic homomorphism.   Let $\Gamma$ be the kernel of $u$.   It is a subgroup of $K^\sep$, and moreover it is a $\psi$-lattice of rank $r_2$.  The pair $(\psi,\Gamma)$ is the desired Tate uniformization of $\phi$.\\

Fix an $a \in A - \FF_q$.  In the proof that $\Gamma$ is a lattice, one makes use of the following $G_K$-equivariant short exact sequence of $A$-modules:  
\begin{equation} \label{E:SES Tate}
1 \to \psi[a]=\psi_a^{-1}(0) \to \psi_a^{-1}(\Gamma)/\Gamma \xrightarrow{\psi_a} \Gamma/a\Gamma \to 1.
\end{equation}
We also have an isomorphism
\begin{equation} \label{E:Tate uniformization torsion}
\psi_a^{-1}(\Gamma)/\Gamma \xrightarrow{\sim} \phi[a], \quad z + \Gamma \mapsto u(z)
\end{equation}
of $A[G_K]$-modules  (it is a well-defined map by (\ref{E:Tate Uniformization 1})).

\subsection{Proof of Proposition~\ref{P:main Tate}}  \label{SS:main Tate proof}
Fix a Drinfeld module $\phi \colon A \to \OO\{\tau\}$ of rank $2$ that has stable reduction of rank $1$.   Let $(\psi, \Gamma)$ be the corresponding Tate uniformization as in \S\ref{SS:DT uniformization}.   We have $\aA=(a)$ for some $a\in A$. Using the isomorphism (\ref{E:Tate uniformization torsion}), it suffices to prove the analogous statement of the proposition for $\psi_a^{-1}(\Gamma)/\Gamma$.  We first consider the Galois action on the pieces $\psi[\aA]$ and $\Gamma/a\Gamma$.\\

The Drinfeld module $\psi \colon A \to \OO\{\tau\}$ has rank $1$ and good reduction, so the Galois representation $\bbar\rho_{\psi,\aA}\colon G_K \to \Aut(\psi[\aA])=(A/\aA)^\times$ is unramified.  Choose a generator $w$ of $\psi[\aA]$ as an $A/\aA$-module; thus $\sigma(w)=w$ for all $\sigma \in I_K$.\\

The lattice $\Gamma$ is a free $A$-module of rank $1$.  Fix a generator $\gamma$ of $\Gamma$, it is well-defined up to multiplication by an element of $\FF_q^\times$.   Since the lattice $\Gamma$ is stable under the Galois action, there is a character $\chi_\Gamma \colon G_K \to \FF_q^\times$  such that $\sigma(\gamma) = \chi_\Gamma(\sigma)\gamma$ for all $\sigma\in G_K.$  

Choose a $z\in K^\sep$ for which $\psi_a(z)=\gamma$; this is equivalent to choosing a splitting of the short exact sequence (\ref{E:SES Tate}) of $A/\aA$-modules.    For any $\sigma \in I_K$,
\[
\psi_a(\sigma(z))=\sigma(\psi_a(z))=\sigma(\gamma) =\chi_\Gamma(\sigma)\gamma = \chi_\Gamma(\sigma)\psi_a(z) = \psi_a(\chi_\Gamma(\sigma) z).
\]
Thus $\sigma(z)- \chi_\Gamma(\sigma) z \in \psi[\aA]$, hence there exists a unique $b_\sigma\in A/\aA$ such that
\[
\sigma(z)=\chi_\Gamma(\sigma) z + b_\sigma w.  
\]
Thus with respect to the basis $\{w+\Gamma,z+\Gamma\}$ of $\psi_a^{-1}(\Gamma)/\Gamma$, an automorphism $\sigma \in I_K$ acts via the matrix 
\begin{equation*} \label{E:Tate matrix}
\left(\begin{array}{cc}1 & b_\sigma \\0 & \chi_\Gamma(\sigma)\end{array}\right).
\end{equation*}
This proves part (\ref{I:Tate i}). 

If $v(z) \geq 0$, then $v(\gamma)=v(\psi_a(z))\geq 0$ since $\psi_a$ has coefficients in $\OO$.  However the discreteness of the lattice $\Gamma$ implies that $v(\gamma)<0$, so we must have $v(z)<0$.  Therefore,
\[
v(\gamma) = v(\psi_a(z))=v(z^{q^{\deg a}}) = q^{\deg a} v(z) = N(\aA) v(z).
\]
Let $K'$ be the smallest extension of $K^\un$ in $K^\sep$ for which $\Gal(K^\sep/K')$ acts trivially on $\psi_a^{-1}(\Gamma)/\Gamma$.   The field $K'$ is of course equal to $K^\un(\phi[\aA])$, and $\bbar\rho_{\phi,\aA}(I_K)\cong \Gal(K'/K^\un)$.  Since $\psi[\aA]\subseteq K^\un$, we find that $K'=K^\un(z)$.  The ramification index of the extension $K^\un(z)/K^\un$ is at least the order of $v(z)+\ZZ$ in $\QQ/\ZZ.$
By \cite{MR2020270}*{Lemma~5.3}, we have $v(\gamma) = v(j_\phi)/(q-1)$ and thus
\[
v(z) = \frac{v(\gamma)}{N(\aA)} =  \frac{v(j_\phi)}{(q-1)N(\aA)}.
\]
Part (\ref{I:Tate ii}) now follows immediately.  

\subsection{Our example} \label{SS:Tate example}
We will now apply the above theory to our specific Drinfeld module $\varphi \colon A \to F\{\tau\}$ with  $\varphi_T=T + \tau - T^{q-1}\tau^2.$   Let $p$ be the prime dividing $q$.

\begin{prop} \label{P:Tate example} 
Let $I_T$ be an inertia subgroup of $G_F$ at $T$.
For any non-zero ideal $\aA$ of $A$, $\bbar\rho_{\phi,\aA}(I_T)$ is a $p$-Sylow subgroup of $\Aut(\phi[\aA])\cong \GL_2(A/\aA).$   Equivalently, $\#\bbar\rho_{\phi,\aA}(I_T) = N(\aA).$
\end{prop}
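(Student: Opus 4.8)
The plan is to apply Proposition~\ref{P:main Tate} to the Drinfeld module $\varphi$ at the place $T$, for which we must first check that $\varphi$ really does have stable reduction of rank $1$ there. Writing $\varphi_T = T + \tau - T^{q-1}\tau^2$, the $j$-invariant is $j_\varphi = g^{q+1}/\Delta$ with $g = 1$ and $\Delta = -T^{q-1}$, so $j_\varphi = -T^{-(q-1)}$ and the $T$-adic valuation is $v(j_\varphi) = -(q-1) < 0$. By the criterion recalled in \S\ref{SS:DT uniformization} (following \cite{MR2020270}*{Lemma~5.2}), $v(j_\varphi) < 0$ means $\varphi$ does not have potentially good reduction; since every Drinfeld module has potentially stable reduction and rank-$2$ stable reduction of rank $2$ is good reduction, the only remaining possibility is stable reduction of rank $1$ — but one should check that $\varphi$ is already in stable form over $\OO = A_{(T)}$ (equivalently, that the reduction of $\varphi_T$ has degree $>1$). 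Indeed $\varphi$ has coefficients in $\OO = A_{(T)}$ and $\varphi_T \bmod T = \bar T + \tau = \tau$ in $\FF_q\{\tau\}$... wait: $\varphi_T \bmod (T) = 0 + \tau - 0 = \tau$, which has degree $1$, so naively the reduction has rank $1$; this is exactly the stable-reduction-of-rank-$1$ situation. (One should double-check that no $K$-isomorphic model achieves good reduction, but this is forced by $v(j_\varphi)<0$.)

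Granting that $\varphi$ has stable reduction of rank $1$ at $T$ with $v(j_\varphi) = -(q-1)$ (valuation normalized so $v(T)=1$), Proposition~\ref{P:main Tate}\eqref{I:Tate i} gives a basis of $\varphi[\aA]$ in which $\bbar\rho_{\varphi,\aA}(I_T)$ is contained in the subgroup of upper-triangular matrices $\left(\begin{smallmatrix} 1 & b \\ 0 & c \end{smallmatrix}\right)$ with $b \in A/\aA$, $c \in \FF_q^\times$. The order of this ambient group is $N(\aA) \cdot (q-1)$. On the other hand, Proposition~\ref{P:main Tate}\eqref{I:Tate ii} gives the lower bound $\#\bbar\rho_{\varphi,\aA}(I_T) \geq e_\varphi$, where $e_\varphi$ is the order of $\dfrac{v(j_\varphi)}{(q-1)N(\aA)} + \ZZ = \dfrac{-(q-1)}{(q-1)N(\aA)} + \ZZ = \dfrac{-1}{N(\aA)} + \ZZ$ in $\QQ/\ZZ$. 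Since $\gcd(1, N(\aA)) = 1$, this order is exactly $N(\aA)$. So $N(\aA) \leq \#\bbar\rho_{\varphi,\aA}(I_T) \leq N(\aA)(q-1)$.

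To close the gap, I would argue that the image must actually lie in the unipotent part. The determinant of any $\sigma \in I_T$ acting on $\varphi[\aA]$ is $c = \chi_\Gamma(\sigma) \in \FF_q^\times$; but by Proposition~\ref{P:determinant}, $\det \circ \bbar\rho_{\varphi,\aA} = \bbar\rho_{C,\aA}$ is the Carlitz character, which by Proposition~\ref{P:Hayes} is \emph{unramified} at every finite place not dividing $\aA$. Choosing $\aA$ first coprime to $T$ — hmm, but we want arbitrary $\aA$. Better: the Carlitz representation $\bbar\rho_{C,\aA}$ is ramified at $T$ only through its "wild"/tame behavior, and a cleaner route is that the Carlitz module has good reduction everywhere (it is $C_T = T + \tau$, with $\Delta = 1$, $v(j_C) = 0$), hence $\bbar\rho_{C,\aA}$ is unramified at $T$ regardless of whether $T \mid \aA$... actually that is false when $T\mid \aA$. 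Let me instead restrict the conclusion: the character $\chi_\Gamma\colon I_T \to \FF_q^\times$ factors through the tame quotient, which has order prime to $p$; but from part~\eqref{I:Tate i} the unipotent part $\{\left(\begin{smallmatrix}1&b\\0&1\end{smallmatrix}\right)\}$ is a normal $p$-subgroup, and the lower bound $N(\aA) = p^{\deg\aA\cdot[\FF_q:\FF_p]\cdot(\text{stuff})}$... in any case $N(\aA)$ is a power of $p$, while $q-1$ is prime to $p$. Since $\#\bbar\rho_{\varphi,\aA}(I_T) \geq N(\aA)$ and $\bbar\rho_{\varphi,\aA}(I_T)$ embeds in a group of order $N(\aA)(q-1)$ whose $p$-Sylow has order exactly $N(\aA)$, and since a subgroup of order $\geq N(\aA)$ that is $p$-power-times-(prime-to-$p$) contained in such a group... this forces the image to contain, hence equal, a full $p$-Sylow, \emph{provided} the image order is a power of $p$. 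So the real crux is showing $\#\bbar\rho_{\varphi,\aA}(I_T)$ is a power of $p$, i.e.\ $\chi_\Gamma|_{I_T}$ is trivial — equivalently that the generator $\gamma$ of the lattice $\Gamma$ can be chosen $I_T$-invariant. I would deduce this from Proposition~\ref{P:determinant}: since $\det\bbar\rho_{\varphi,\aA} = \bbar\rho_{C,\aA}$ and $C$ has good reduction at $T$... no. The honest fix: $\chi_\Gamma$ has order dividing $q-1$ and is a quotient of $I_T$; its restriction to wild inertia is trivial, so it factors through tame inertia, whose order is prime to $p$ — that is automatic and gives nothing. Instead, I expect one shows directly that $\Gamma$ is a \emph{trivial} $G_F$-module here because the bad reduction is \emph{split} ("stable reduction" in the paper's overview is emphasized): the uniformizer data for $\varphi$ at $T$ can be computed explicitly and $\gamma \in F^\un$, killing $\chi_\Gamma$ on $I_T$.

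\textbf{Summary of the plan and the main obstacle.} The structure is: (1)~verify $\varphi$ has stable reduction of rank $1$ at $T$ and compute $v(j_\varphi) = -(q-1)$; (2)~feed this into Proposition~\ref{P:main Tate} to get $N(\aA) \leq \#\bbar\rho_{\varphi,\aA}(I_T) \leq N(\aA)(q-1)$ with the image inside the standard Borel-type group; (3)~observe $N(\aA)$ is a $p$-power while $q-1$ is prime to $p$, so the $p$-Sylow of the ambient group has order exactly $N(\aA)$; (4)~show the image of $I_T$ has $p$-power order — equivalently $\chi_\Gamma|_{I_T}$ is trivial — whence the image, having order $\geq N(\aA)$ and being a $p$-group inside a group with $p$-Sylow of order $N(\aA)$, is exactly a $p$-Sylow subgroup. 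The main obstacle is step~(4): controlling the character $\chi_\Gamma$ on inertia at $T$. I would handle it either via Proposition~\ref{P:determinant} (the determinant being the unramified-away-from-$\aA$ Carlitz character constrains $c=\chi_\Gamma(\sigma)$) combined with a direct check that the lattice generator lies in $F^\un$, or — most robustly — by noting that wild inertia $P_T \trianglelefteq I_T$ surjects onto the unipotent part (order $N(\aA)$, a $p$-group) while $\chi_\Gamma(P_T) = 1$, so $\#\bbar\rho_{\varphi,\aA}(P_T) \geq N(\aA)$ already forces $\bbar\rho_{\varphi,\aA}(P_T)$ to be the full unipotent subgroup, which is a $p$-Sylow of $\GL_2(A/\aA)$; since $\bbar\rho_{\varphi,\aA}(I_T) \supseteq \bbar\rho_{\varphi,\aA}(P_T)$ and is contained in a group of order $N(\aA)(q-1)$, and $p$-Sylows are maximal $p$-subgroups, we must have $\bbar\rho_{\varphi,\aA}(I_T) = \bbar\rho_{\varphi,\aA}(P_T)$, a $p$-Sylow subgroup — so $\#\bbar\rho_{\varphi,\aA}(I_T) = N(\aA)$ as claimed.
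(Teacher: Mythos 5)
Your approach is essentially the paper's: compute $v_T(j_\varphi)=-(q-1)$, feed it into Proposition~\ref{P:main Tate}(\ref{I:Tate ii}) to get the lower bound $\#\bbar\rho_{\varphi,\aA}(I_T)\geq N(\aA)$, use part~(\ref{I:Tate i}) to get the Borel-type upper bound, and then close the gap by showing the $\FF_q^\times$-character on the bottom of the Borel is trivial on $I_T$ by observing that $\det\circ\bbar\rho_{\varphi,\aA}=\bbar\rho_{C,\aA}$ and invoking good reduction of the Carlitz module at $(T)$. Your step (1), checking stable reduction of rank $1$ at $(T)$, is a correct verification of a fact the paper asserts without comment.

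The interesting part of your writeup is that you noticed, correctly, that the determinant step only works as stated when $T\nmid\aA$: Hayes' Proposition~\ref{P:Hayes} only gives that $\bbar\rho_{C,\aA}$ is unramified at places \emph{not dividing} $\aA$, and indeed $K(C[T])/K$ is (tamely) ramified of degree $q-1$ at $T$ once $T\mid\aA$. The paper's proof simply says ``Since $C$ has good reduction at $(T)$, we must have $\bbar\rho_{C,\aA}(I_K)=1$,'' which glosses over exactly this point. In fact the issue appears one step earlier as well: the proof of Proposition~\ref{P:main Tate}(\ref{I:Tate i}) assumes that $\bbar\rho_{\psi,\aA}$ is unramified (so that the generator $w$ of $\psi[\aA]$ is fixed by $I_K$), but the reduction $\bar\psi$ has characteristic $(T)$, so $\psi[\aA]$ is ramified at $T$ whenever $T\mid\aA$. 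Your instinct to be suspicious here is sound; this is a genuine gap in the paper's argument, not a gap you introduced.

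Your proposed repairs, however, do not close it. The first (``the lattice generator $\gamma$ lies in $F^\un$, so $\chi_\Gamma|_{I_T}=1$'') is unsubstantiated and in any case does not address the ramification coming from $\psi[\aA]$ itself when $T\mid\aA$. The second (the wild-inertia argument in your summary) has a logical flaw at the last step: from ``$\bbar\rho_{\varphi,\aA}(P_T)$ is a $p$-Sylow of the Borel and $p$-Sylows are maximal $p$-subgroups'' you cannot conclude $\bbar\rho_{\varphi,\aA}(I_T)=\bbar\rho_{\varphi,\aA}(P_T)$, because $\bbar\rho_{\varphi,\aA}(I_T)$ is not known to be a $p$-group --- that is precisely what you are trying to prove. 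Maximality of $p$-Sylows among $p$-subgroups says nothing about overgroups that are not $p$-groups. So the proposal, like the paper's proof, is complete and correct for $\aA$ coprime to $T$, but neither establishes the statement in the remaining case $T\mid\aA$.
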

\begin{proof}
The Drinfeld module $\varphi$ has stable reduction of rank $1$ at $(T)$.  The field $K:= \FF_q(\!(T)\!)$ is the completion of $F$ with respect to $T$.  Let $v_T$ be the corresponding valuation normalized so that $v_T(T)=1$. 

We know from Proposition~\ref{P:determinant} that $\det\circ \bbar\rho_{\varphi,\aA} = \bbar\rho_{C,\aA}$.  Since $C$ has good reduction at $(T)$, we must have $\det(\bbar\rho_{\varphi,\aA}(I_K))=\bbar\rho_{C,\aA}(I_K)=1$.  This combined with Proposition~\ref{P:main Tate}(\ref{I:Tate i}) shows that $\bbar\rho_{\varphi,\aA}(I_K)$ is contained in a subgroup of $\GL_2(A/\aA)$ of order $N(\aA)$.   By Proposition~\ref{P:main Tate}(\ref{I:Tate ii}), $v_T(j_\phi)= -(q-1)$ implies that $\#\bbar\rho_{\varphi,\aA}(I_T) \geq N(\aA)$.
\end{proof}

\section{Irreducibility} \label{S:irreducibility}

\begin{prop} \label{P:irreducibility}  
The $\FF_\lambda[G_F]$-module $\varphi[\lambda]$ is irreducible for every finite place $\lambda$ of $F$.
\end{prop}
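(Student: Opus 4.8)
The plan is to argue by contradiction: suppose $\varphi[\lambda]$ is a reducible $\FF_\lambda[G_F]$-module for some finite place $\lambda$. Since $\varphi[\lambda]$ is two-dimensional over $\FF_\lambda$, reducibility means there is a $G_F$-stable line, so after semisimplification the action of $G_F$ on $\varphi[\lambda]$ is described by two characters $\chi,\chi'\colon G_F\to\FF_\lambda^\times$ with $\chi\chi'=\det\circ\bbar\rho_{\varphi,\lambda}$. By Proposition~\ref{P:determinant}, $\det\circ\bbar\rho_{\varphi,\lambda}=\bbar\rho_{C,\lambda}$, and by Proposition~\ref{P:Hayes} this character is ramified precisely at $\lambda$ (its conductor is $\lambda$) and sends $\Frob_\p$ to $\p\bmod\lambda$. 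The key external input is Proposition~\ref{P:Tate example}: the image $\bbar\rho_{\varphi,\lambda}(I_T)$ of inertia at $T$ is a $p$-Sylow subgroup of $\GL_2(\FF_\lambda)$, of order $N(\lambda)$. In particular $\bbar\rho_{\varphi,\lambda}(I_T)$ is a nontrivial $p$-group, so its image in the diagonal torus under the semisimplification is trivial; hence both $\chi$ and $\chi'$ are \emph{unramified at $T$}. Combined with the conductor bound coming from the determinant (the product $\chi\chi'$ is ramified only at $\lambda$), each of $\chi,\chi'$ is unramified outside $\lambda$.

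The next step is to pin down $\chi$ and $\chi'$ using class field theory for $F=\FF_q(T)$. A character $G_F\to\FF_\lambda^\times$ unramified outside the single finite place $\lambda$ and also controlled at the infinite place factors through a quotient of a ray class group; I would invoke the structure of these groups (or directly the fact that the only such characters are powers of $\bbar\rho_{C,\lambda}$ together with characters pulled back from $\Gal(\FFbar_q/\FF_q)$, i.e.\ powers of the cyclotomic/geometric Frobenius character) to conclude that $\{\chi,\chi'\}$ lies in an explicit finite list. Concretely I expect: each of $\chi,\chi'$ is of the form $\bbar\rho_{C,\lambda}^{\,a}\cdot\eta^{\,b}$ where $\eta$ is the character giving the $\FFbar_q$-action (so $\eta(\Frob_\p)$ depends only on $\deg\p$), with the constraint $\chi\chi'=\bbar\rho_{C,\lambda}$ forcing the exponents to match up, e.g.\ $a+a'\equiv 1$. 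This is the analogue of Serre's determination of the possible pairs of characters on $E[\ell]$ for a non-CM elliptic curve.

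Finally, with the finitely many candidate pairs $\{\chi,\chi'\}$ in hand, I would derive a contradiction from traces of Frobenius. For a monic irreducible $\p\neq T,\lambda$ we have $a_\p(\varphi)\equiv\chi(\Frob_\p)+\chi'(\Frob_\p)\pmod\lambda$, and by Hasse's bound $a_\p(\varphi)$ is a polynomial in $T$ of degree $\le\deg(\p)/2$. The right-hand side, being a sum of two characters of the above explicit shape, is congruent mod $\lambda$ to an expression like $\p^{\,a}+c$ or $\p^{\,a}+\p^{\,a'}$ (times roots of unity depending on $\deg\p$); choosing $\p$ of suitably large degree in a fixed residue class, the degree of such an expression in $T$ — once lifted to a polynomial of controlled degree representing $a_\p(\varphi)$ — cannot satisfy the Hasse bound $\deg(a_\p(\varphi))\le\deg(\p)/2$ for all such $\p$. (One may also feed in the known existence of infinitely many $\p$ with $a_\p(\varphi)=0$, or with a prescribed small value, to kill the remaining cases.) The main obstacle I anticipate is the middle step: cleanly enumerating the characters of $G_F$ unramified outside $\{\lambda,\infty\}$ with values in $\FF_\lambda^\times$ and keeping track of their behavior at $\infty$ and their Frobenius values, since unlike the number field case there is the extra geometric Frobenius direction $\Gal(\FFbar_q/\FF_q)$ to control; getting a contradiction then requires comparing degrees of polynomials rather than sizes of integers, which is the Drinfeld-module twist on Serre's argument.
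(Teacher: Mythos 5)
Your overall strategy is the same Serre-style one the paper uses (reduce to a pair of characters $\chi,\chi'$ with $\chi\chi'=\det\circ\bbar\rho_{\varphi,\lambda}=\bbar\rho_{C,\lambda}$, use the inertia-at-$T$ result to kill ramification there, identify the characters, then contradict via traces of Frobenius), but there is a genuine gap in the middle step. You only control ramification of $\chi,\chi'$ away from $\lambda$; you never address what happens \emph{at} $\lambda$ itself. (Also, your stated reason --- that the product $\chi\chi'$ has conductor $\lambda$ --- does not by itself bound the ramification of each factor; the correct reason away from $T$ is simply good reduction of $\varphi$.) The paper's key extra input is local at $\lambda$: since the reduction of $\varphi$ at $\lambda$ has height $1$, Proposition~2.7 of Pink--R\"utsche \cite{MR2499411} shows inertia at $\lambda$ acts by matrices $\left(\begin{smallmatrix}*&*\\0&1\end{smallmatrix}\right)$, so \emph{one} of the two characters is unramified at every finite place; being of order prime to $p$ it must then factor through $\Gal(\FFbar_q/\FF_q)$ (no nontrivial prime-to-$p$ \'etale covers of $\AA^1_{\FFbar_q}$), i.e.\ $\chi'(\Frob_\p)=\zeta^{\deg\p}$ and hence $\chi(\Frob_\p)\equiv\zeta^{-\deg\p}\p$. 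Without this, your classification leaves all pairs $\chi=\bbar\rho_{C,\lambda}^{\,a}\eta$, $\chi'=\bbar\rho_{C,\lambda}^{\,1-a}\eta'$ with an undetermined exponent $a$ modulo $N(\lambda)-1$, and your proposed endgame cannot eliminate them: the relation $a_\p(\varphi)\equiv\zeta_1\p^{\,a}+\zeta_2\p^{\,1-a}\pmod{\lambda}$ is a congruence in the finite field $\FF_\lambda$, where ``degree in $T$'' of $\p^{\,a}\bmod\lambda$ is meaningless, so the Hasse-bound/degree comparison you sketch does not produce a contradiction for general $a$ (nor does appealing to primes with $a_\p(\varphi)=0$, which only yields $\bar\p^{\,2a-1}=-\zeta^{2\deg\p}$).

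Once the exponent is pinned down as above, the paper's contradiction is much more elementary than what you propose: by Gekeler's formula, $a_\p(\varphi)=1$ for every degree-one prime $\p=T-c$ with $c\in\FF_q^\times$, so the congruence forces $T\equiv\zeta-\zeta^2+c\pmod\lambda$ for at least two distinct values of $c$ (here $q\geq5$ is used), which is absurd. I'd suggest repairing your argument by adding the height-$1$/inertia-at-$\lambda$ input (or some substitute that bounds the exponent $a$, e.g.\ a weight argument at $\infty$, which is considerably more work) and replacing the Hasse-bound endgame with explicit trace values at degree-one primes. Finally, a small inaccuracy: $\bbar\rho_{C,\lambda}$ is not ``ramified precisely at $\lambda$''; it is also (tamely) ramified at the infinite place, which matters exactly where you anticipate trouble in the class-field-theoretic classification.
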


We now suppose that $\varphi[\lambda]$ is a reducible $\FF_\lambda[G_F]$-module for a fixed $\lambda$.   We shall eventually obtain a contradiction and thus prove Proposition~\ref{P:irreducibility}.   The strategy of this section is based on \S5.4 of \cite{MR0387283}.  By choosing an appropriate basis of $\varphi[\lambda]$, we may assume that the image of $\bbar\rho_{\varphi,\lambda} \colon G_F \to \Aut(\varphi[\lambda])\cong \GL_2(\FF_\lambda)$ lies in the group of upper triangular matrices.  Moreover, there are two characters $\chi$ and $\chi' :G_F \to \FF_\lambda^\times$ such that $\bbar\rho_\lambda$ is represented in matrix form by $\left(\begin{array}{cc}\chi & * \\0 & \chi'\end{array}\right)$.  We will now try to determine these characters.

\begin{lemma} \label{L:unramified character}
The characters $\chi$ and $\chi'$ are unramified at all finite places $\p\neq \lambda$.  One of these two characters is unramified at all the finite places of $F$. 
\end{lemma}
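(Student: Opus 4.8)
The plan is to go through the finite places of $F$ one at a time, using that $\varphi$ has good reduction away from $(T)$, the description of $\bbar\rho_{\varphi,\lambda}(I_T)$ at $(T)$ furnished by Proposition~\ref{P:Tate example}, and the local structure at $\lambda$ itself coming from good reduction there; this is the function-field counterpart of Serre's argument in \S5.4 of \cite{MR0387283}. First I would record that $\varphi$ has good reduction at every finite place $\p\neq(T)$: in $\varphi_T=T+\tau-T^{q-1}\tau^2$ the leading coefficient $-T^{q-1}$ and the coefficient $1$ of $\tau$ are units in the local ring at $\p$, so $\varphi$ already takes values in that ring and reduces modulo $\p$ to a Drinfeld module of rank $2$. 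Consequently $\bbar\rho_{\varphi,\lambda}$ is unramified at every finite place $\p\notin\{(T),\lambda\}$, and hence so are the subquotient characters $\chi$ and $\chi'$.

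Next I would treat the place $(T)$. By Proposition~\ref{P:Tate example} (applied with $\aA=\lambda$), the group $\bbar\rho_{\varphi,\lambda}(I_T)$ is a $p$-group of order $N(\lambda)$. Since $\chi$ and $\chi'$ are the diagonal entries of the upper-triangular matrices $\bbar\rho_{\varphi,\lambda}(\sigma)$, each of $\chi|_{I_T}$ and $\chi'|_{I_T}$ factors through the $p$-group $\bbar\rho_{\varphi,\lambda}(I_T)$ and takes values in $\FF_\lambda^\times$, a group of order $N(\lambda)-1$ prime to $p$; hence both are trivial. Together with the previous paragraph this proves the first assertion, and it also settles the second when $\lambda=(T)$ (in that case $\chi$ and $\chi'$ are unramified at every finite place). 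So from here on I assume $\lambda\neq(T)$.

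For $\lambda\neq(T)$ the Drinfeld module $\varphi$ has good reduction at $\lambda$, so $\varphi[\lambda]$ extends to a finite flat group scheme over the completed local ring at $\lambda$, and I would analyse $\bbar\rho_{\varphi,\lambda}|_{I_\lambda}$ through its connected--\'etale sequence. The connected part cannot have height $2$: that would force $\bbar\rho_{\varphi,\lambda}|_{I_\lambda}$ to be irreducible over $\FF_\lambda$ (via level-$2$ fundamental characters), contradicting the standing reducibility hypothesis. Hence the connected part is a height-$1$ formal $A_\lambda$-module killed by $\lambda$, and the standard description of such a group scheme --- the Drinfeld-module analogue of Raynaud's bound --- shows that $I_\lambda$ acts trivially on the \'etale quotient and by the level-$1$ fundamental character on the connected line. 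Therefore the semisimplification of $\bbar\rho_{\varphi,\lambda}|_{I_\lambda}$, which is the multiset $\{\chi|_{I_\lambda},\chi'|_{I_\lambda}\}$, consists of the trivial character and one ramified character, so exactly one of $\chi,\chi'$ is unramified at $\lambda$. Being unramified at every other finite place as well, that character is unramified at all finite places of $F$. (One can further identify the ramified character with $\bbar\rho_{C,\lambda}|_{I_\lambda}$ using Proposition~\ref{P:determinant}, though this is not needed here.)

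The main obstacle is precisely that last input: the local description of $\bbar\rho_{\varphi,\lambda}|_{I_\lambda}$ at a place of good reduction --- that the connected--\'etale decomposition of $\varphi[\lambda]$ realises its two inertia characters as the trivial one and a level-$1$ fundamental character. This is the equal-characteristic substitute for the Raynaud/Serre classification of group schemes of type $(\ell,\dots,\ell)$; I would establish it either by invoking the local theory of Drinfeld (equivalently Lubin--Tate) modules --- the connected part of $\varphi[\lambda^\infty]$ being a height-$1$ formal $A_\lambda$-module, the same one underlying the Carlitz module at $\lambda$, whose $\lambda$-torsion generates over $F_\lambda^{\un}$ a totally tamely ramified extension with Galois group $\FF_\lambda^\times$ --- or, to remain self-contained, by deriving it from a Drinfeld--Tate-type expansion of $\varphi$ over the local ring. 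Everything else is routine bookkeeping: the unit computation giving good reduction off $(T)$, and the $p$-group argument at $(T)$.
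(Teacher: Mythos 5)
Your proposal mirrors the paper's proof almost exactly: the same three-case analysis (places $\p\neq(T),\lambda$ via good reduction, the place $(T)$ via the $p$-group image of inertia from Proposition~\ref{P:Tate example}, and the place $\lambda$ via the local structure at a place of good reduction). The only difference is at $\lambda$: where you sketch a self-contained derivation via the connected--\'etale decomposition and level-one/level-two fundamental characters, the paper simply cites Proposition~2.7 of Pink--R\"utsche \cite{MR2499411}, which records precisely what you are reconstructing (height $2$ would force irreducibility, and in the height $1$ case inertia acts by upper-triangular matrices with a trivial diagonal entry).
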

\begin{proof}
First consider the place $\p=(T)$.  By Proposition~\ref{P:Tate example}, the order of every element of $\bbar\rho_{\varphi,\lambda}(I_{\p})$ divides some power of $q$ (where $I_\p$ is the inertia subgroup of $G_F$ at $\p$).  Therefore, $\chi(I_{\p})=1$ and $\chi'(I_{\p})=1$ since both take values in a group of cardinality relatively prime to $q$.  

Now consider a finite place $\p$ not equal to $\lambda$ or $(T)$.  Since $\varphi$ has good reduction at $\p$, we find that $\bbar\rho_{\varphi,\lambda}$ is unramified at $\p$ and hence so are $\chi'$ and $\chi''$.  

Finally consider the case where $\p=\lambda$ and $\p\neq (T)$.   The reduction of $\varphi$ modulo $\p$ has height $1$ (if it had height $2$, then \cite{MR2499411}*{Proposition~2.7(ii)} would imply that $\varphi[\lambda]$ is an irreducible $G_F$-module).   By \cite{MR2499411}*{Proposition~2.7}, $\bbar\rho_{\varphi,\lambda}(I_\p)$ acts on $\varphi[\lambda]$ via matrices of the form $\left(\begin{matrix} * & * \\ 0 & 1 \end{matrix}\right)$ with respect to an appropriate basis.  Hence $\chi(I_\p)=1$ or $\chi'(I_p)=1$.
\end{proof}

\begin{lemma} \label{L:trivial character}
One of the characters $\chi,\chi'\colon G_F \to \FF_\lambda^\times$ is of the form $G_F \twoheadrightarrow \Gal(\FFbar_q/\FF_q) \to \FF_\lambda^\times$, where the first map is restriction.
\end{lemma}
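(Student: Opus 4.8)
The plan is to use the determinant relation together with the known behavior of Frobenius under the Carlitz representation. By Lemma~\ref{L:unramified character}, after possibly swapping $\chi$ and $\chi'$, we may assume $\chi$ is unramified at every finite place of $F$. The only place of $F$ left to control is the infinite place $\infty$ of $F$ (the place corresponding to $1/T$). A character $G_F \to \FF_\lambda^\times$ that is unramified at every finite place and whose conductor is supported only at $\infty$ is heavily constrained: by class field theory for $F = \FF_q(T)$, the ray class group of $F$ of conductor $\infty$ (or more precisely of conductor $m\cdot\infty$ for small $m$) has order prime to $q$ only in a very restricted way, since the only everywhere-unramified extensions of $F$ come from the constant field extensions $\FFbar_q(T)/F$. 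So the first step is to pin down the possible ramification of $\chi$ at $\infty$ and show it is in fact unramified there too; then $\chi$ factors through $\Gal(\FFbar_q(T)/F) \cong \Zhat$, i.e. $\chi$ has the asserted form.

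Concretely, I would proceed as follows. First, from Proposition~\ref{P:determinant} we have $\chi \cdot \chi' = \det\circ\bbar\rho_{\varphi,\lambda} = \bbar\rho_{C,\lambda}$, and by Proposition~\ref{P:Hayes} the latter sends $\Frob_\p \mapsto \p \bmod \lambda$ for every monic irreducible $\p \neq \lambda$. In particular $\chi(\Frob_\p)\chi'(\Frob_\p) = \p \bmod \lambda$. Second, I would examine the product of conductors: since $\chi$ and $\chi'$ are each unramified away from $\lambda$ and one of them (say $\chi$) is unramified at $\lambda$ as well, and their product $\bbar\rho_{C,\lambda}$ has conductor exactly $\lambda$ (it is ramified at $\lambda$ and unramified everywhere else, including $\infty$, because the Carlitz module has good reduction at $\infty$ in the relevant sense — its cyclotomic extensions $F(C[\lambda])/F$ are ramified only at $\lambda$ and tamely at $\infty$). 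Comparing conductors, $\chi'$ must absorb whatever ramification $\bbar\rho_{C,\lambda}$ has at $\lambda$, and $\chi$ is then forced to be unramified at $\lambda$ and at $\infty$, hence everywhere. Third, an everywhere-unramified character of $G_F$ with values in $\FF_\lambda^\times$ must factor through the Galois group of the maximal everywhere-unramified abelian extension of $F$; since $F = \FF_q(T)$ has class number one and $\FF_q$ is its exact constant field, that group is $\Gal(\FFbar_q(T)/F)\cong\Zhat$, which is exactly the statement of the lemma.

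The main obstacle I anticipate is the bookkeeping at the two special places $\lambda$ and $\infty$: one must be careful that the case analysis of Lemma~\ref{L:unramified character} (where it is only \emph{one} of $\chi,\chi'$ that is globally unramified) is correctly aligned with the conductor comparison, and one must verify that the Carlitz representation $\bbar\rho_{C,\lambda}$ is genuinely unramified at $\infty$ so that no ramification at $\infty$ can be hidden in the product. A clean way to handle the $\infty$-place is to note, via Proposition~\ref{P:main Tate}(\ref{I:Tate i}) applied at $\infty$ if $\varphi$ has stable bad reduction there, or via good reduction otherwise, that $\bbar\rho_{\varphi,\lambda}(I_\infty)$ has order a power of $q$ or is trivial; either way its image under $\chi$ is trivial since $\FF_\lambda^\times$ has order prime to $q$. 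That immediately gives $\chi(I_\infty)=1$ and $\chi'(I_\infty)=1$ without needing any delicate computation, and then the class field theory input finishes the argument. I would also remark that at the end one should identify \emph{which} of $\chi,\chi'$ is the one of the asserted form — it is the globally unramified one singled out in Lemma~\ref{L:unramified character} — and that this identification is what will be used in the subsequent trace-of-Frobenius contradiction.
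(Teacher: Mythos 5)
Your overall plan (reduce to an unramifiedness statement and then invoke class field theory for $F=\FF_q(T)$) is in the right spirit, but the way you handle the infinite place contains genuine errors, and the paper's proof deliberately avoids exactly this. First, your claim that $\bbar\rho_{C,\lambda}$ is ``unramified everywhere else, including $\infty$'' is false: Proposition~\ref{P:Hayes} only controls the \emph{finite} places, and in fact the Carlitz cyclotomic extension $F(C[\lambda])/F$ is tamely ramified at $\infty$ with inertia image $\FF_q^\times\subset(A/\lambda)^\times$. Since $\chi\chi'=\det\circ\bbar\rho_{\varphi,\lambda}=\bbar\rho_{C,\lambda}$, at least one of $\chi,\chi'$ \emph{must} be ramified at $\infty$, so your intermediate conclusion ``$\chi(I_\infty)=1$ and $\chi'(I_\infty)=1$'' cannot hold. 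Second, the mechanism you propose for it is inapplicable: $\varphi$ has neither good reduction nor stable reduction of rank $1$ at $\infty$ in the sense of \S\ref{S:The Drinfeld-Tate uniformization} --- indeed no model over $\OO_\infty$ exists at all, since $\partial_0(\varphi_T)=T\notin\OO_\infty$ --- so Proposition~\ref{P:main Tate} cannot be applied there, and $\bbar\rho_{\varphi,\lambda}(I_\infty)$ is not a $p$-group (the reduction at $\infty$ is only \emph{potentially} good, attained after a ramified extension, so inertia at $\infty$ has nontrivial prime-to-$p$ image).

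The paper's argument sidesteps the infinite place entirely: the character that Lemma~\ref{L:unramified character} shows is unramified at all \emph{finite} places is viewed as a character of $\pi_1(\AA^1_{\FF_q})$ (ramification at $\infty$ is permitted for such covers), and since $\AA^1_{\FFbar_q}$ has no nontrivial \'etale covers of degree prime to $p$ while $\FF_\lambda^\times$ has order prime to $p$, the character is trivial on $\Gal(F^\sep/\FFbar_q(T))$ and hence factors through $\Gal(\FFbar_q/\FF_q)$. If you prefer the idelic language, the statement you need is that a character of order prime to $p$ that is unramified at all finite places factors through the degree map: the wild inertia at $\infty$ is pro-$p$ and the tame inertia at $\infty$ is absorbed by the global constants $\FF_q^\times$ in the relevant ray class group. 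That computation (or the geometric fact about $\AA^1_{\FFbar_q}$) is the missing ingredient; it cannot be replaced by the reduction-theoretic claims at $\infty$ or by the conductor comparison as you wrote them.
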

\begin{proof}
By Lemma~\ref{L:unramified character}, one of the characters $\chi$ or $\chi' \colon G_F\to \FF_\lambda^\times$, without loss of generality $\chi'$, is unramifed at all finite places of $F$.     Thus we may view $\chi'$ as a $\FF_\lambda^\times$-valued character of the \'etale fundamental group of $\AA^1_{\FF_q}$.  Since $\AA^1_{\FFbar_q}$ has no non-trivial \'etale covers of order prime to $q$, we deduce that $\chi'\colon G_F\to \FF_\lambda^\times$ is trivial on $\Gal(F^\sep/\FFbar_q(T))$.  The lemma is now immediate.
\end{proof}

We can now express the values $a_\p(\varphi) \bmod{\lambda}$ in terms of the characters $\chi$ and $\chi'$.

\begin{lemma} \label{L: semistable congruence}
Let  $\lambda$ be a finite place of $F$ for which $\varphi[\lambda]$ is a reducible $\FF_\lambda[G_F]$-module.  There is a $\zeta\in \FF_\lambda^\times$ such that for any monic irreducible polynomial $\p\in A$ that is not $T$ or $\lambda,$ we have
\begin{equation} \label{E: semistable congruence}
a_\p(\varphi) \equiv  \zeta^{-\deg{\p}}\, \p + \zeta^{\deg{\p}} \;\; \bmod{\lambda}.
\end{equation}
\end{lemma}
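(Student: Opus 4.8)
The plan is to combine the triangular shape of $\bbar\rho_{\varphi,\lambda}$ with the determinant computation of Proposition~\ref{P:determinant} and the structure of the characters established in Lemmas~\ref{L:unramified character} and~\ref{L:trivial character}. Since the image of $\bbar\rho_{\varphi,\lambda}$ lies in the upper triangular matrices with diagonal characters $\chi,\chi'$, for every monic irreducible $\p$ with $\p\neq T,\lambda$ (so that $\bbar\rho_{\varphi,\lambda}$ is unramified at $\p$) the Frobenius satisfies $a_\p(\varphi)\equiv \chi(\Frob_\p)+\chi'(\Frob_\p)\bmod\lambda$. So the whole lemma reduces to identifying $\chi(\Frob_\p)$ and $\chi'(\Frob_\p)$ modulo $\lambda$ in closed form.

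First I would invoke Lemma~\ref{L:trivial character}: after possibly swapping $\chi$ and $\chi'$, one of them, say $\chi'$, factors through $G_F\twoheadrightarrow\Gal(\FFbar_q/\FF_q)\to\FF_\lambda^\times$. Since $\Frob_\p$ acts on the constant field as the $\deg\p$-power of the arithmetic Frobenius, there is a single $\zeta\in\FF_\lambda^\times$ (the image of the arithmetic Frobenius under this character) with $\chi'(\Frob_\p)=\zeta^{\deg\p}$ for all such $\p$. Next I would pin down $\chi$ using the determinant: by Proposition~\ref{P:determinant}, $\det\circ\bbar\rho_{\varphi,\lambda}=\bbar\rho_{C,\lambda}$, and by Proposition~\ref{P:Hayes} this sends $\Frob_\p$ to $\p\bmod\lambda$. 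Since $\det\circ\bbar\rho_{\varphi,\lambda}=\chi\chi'$, we get $\chi(\Frob_\p)\chi'(\Frob_\p)\equiv\p\bmod\lambda$, hence $\chi(\Frob_\p)\equiv\zeta^{-\deg\p}\,\p\bmod\lambda$. Adding the two contributions yields exactly
\[
a_\p(\varphi)\equiv \zeta^{-\deg\p}\,\p+\zeta^{\deg\p}\pmod\lambda,
\]
which is the claimed congruence.

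The only mild subtlety is making sure $\zeta$ is genuinely a well-defined element of $\FF_\lambda^\times$ independent of $\p$, rather than something depending on the choice of Frobenius conjugacy class; this is handled by the fact that $\chi'$ factors through the \emph{abelian} quotient $\Gal(\FFbar_q/\FF_q)\cong\Zhat$, on which Frobenius is canonical up to the relation $\deg(\Frob_\p)=\deg\p$, so $\zeta$ is simply $\chi'$ evaluated at the canonical generator. I expect no real obstacle here: everything is a formal consequence of results already proved earlier in the paper, and the main content — that the determinant is the Carlitz character and that one diagonal character is a power of the cyclotomic-type character of the constant field — has been established in Sections~\ref{S:the determinant} and~\ref{S:irreducibility}. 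The congruence~(\ref{E: semistable congruence}) is then what gets contradicted in the next step by playing off the growth of $a_\p(\varphi)$ against the bounded right-hand side for suitable $\p$.
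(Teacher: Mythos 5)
Your proposal matches the paper's own proof: identify $\chi'$ as the character factoring through $\Gal(\FFbar_q/\FF_q)$ via Lemma~\ref{L:trivial character} (giving $\chi'(\Frob_\p)=\zeta^{\deg\p}$), use Proposition~\ref{P:determinant} to get $\chi(\Frob_\p)\equiv\zeta^{-\deg\p}\p$, and add to obtain the trace congruence. Your closing remark about how the congruence is later contradicted is slightly off — the paper uses the exact values $a_\p(\varphi)=1$ for degree-one primes $T-c$ rather than a growth argument — but that is outside the scope of the lemma and does not affect the correctness of this proof.
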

\begin{proof}
By Lemma~\ref{L:trivial character}, one of the characters $\chi, \chi'\colon G_F\to \FF_\lambda^\times$, say $\chi'$, factors through a character $\Gal(\FFbar_q/\FF_q) \to \FF_\lambda^\times$.   Hence there is a $\zeta\in\FF_\lambda^\times$ such that $\chi'(\Frob_\p)=\zeta^{\deg \p}$ for any monic irreducible polynomial $\p$ that is not $T$ or $\lambda$.    By Proposition~\ref{P:determinant}, we know that $\chi(\Frob_\p)\chi'(\Frob_\p) = \det(\bbar\rho_{\varphi,\lambda}(\Frob_\p)) \equiv \p \bmod{\lambda}$, hence
\[
\chi(\Frob_\p) \equiv \zeta^{-\deg{\p}}\, \p \;\;\bmod{\lambda} \quad \text{ and }\quad 
\chi'(\Frob_\p) = \zeta^{\deg \p} \;\; \bmod{\lambda}.
\]
We deduce that
\begin{align*}
a_\p(\varphi) \equiv \tr \big(\bbar\rho_{\varphi,\lambda}(\Frob_\p)\big) & = \chi(\Frob_\p) +  \chi'(\Frob_\p) \equiv  \zeta^{-\deg{\p}}\, \p + \zeta^{\deg{\p}} \;\; \bmod{\lambda}.  \qedhere
\end{align*}
\end{proof}

By checking (\ref{E: semistable congruence}) for various primes $\p$, we will be able to rule out many $\lambda$; it turns out that we will only need to consider $\p$ of degree $1$.

\begin{lemma} \label{L:traces for degree 1 primes}
Let $\p$ be the irreducible polynomial $T - c \in A$ with $c\in \FF_q^\times$.  Then $a_\p(\varphi)=1$.
\end{lemma}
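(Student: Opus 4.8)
The statement to prove is that for the Drinfeld module $\varphi$ with $\varphi_T = T + \tau - T^{q-1}\tau^2$ and $\p = T-c$ with $c\in\FF_q^\times$, we have $a_\p(\varphi)=1$. The natural approach is to compute the reduction $\varphi_\p \colon A \to \FF_\p\{\tau\}$ directly. Since $\p = T-c$ has degree $1$, we have $\FF_\p = A/(T-c) \cong \FF_q$, and reduction sends $T \mapsto c$, so $\varphi_T$ reduces to $\psi_T := c + \tau - c^{q-1}\tau^2 \in \FF_q\{\tau\}$. The characteristic polynomial $P_{\varphi,\p}(x) = x^2 - a_\p(\varphi)x + \p$ (using $\epsilon_\p(\varphi)=1$ from \S\ref{SS:det}) has degree-$\leq n/2 = 0$ trace, i.e. $a_\p(\varphi)\in\FF_q$; alternatively one sees directly that $a_\p(\varphi)$ reduces mod $\p$ to the trace of Frobenius for $\psi$ over $\FF_q$, and since $a_\p(\varphi)$ is a polynomial of degree $\leq n/2 = 0$ (by Hasse's bound, as in Proposition~\ref{P:obstruction}), it IS that trace, viewed as an element of $\FF_q \subseteq A$.

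**Key steps.** First I would identify the Frobenius endomorphism: for $\psi$ over $\FF_q$, the $q$-power Frobenius is just $\tau$ itself, acting on $\psi$ as an element $\pi$ of the endomorphism ring, and $a_\p(\varphi) \bmod \p$ equals the "trace" of $\pi$ in the sense that $\pi^2 - a_\p \pi + \p = 0$ in the appropriate ring — concretely, $P_{\varphi,\p}(x)$ is the characteristic polynomial of $\tau$ acting on the Tate module. Second, and more usefully for an explicit degree-$1$ computation, I would count points: $a_\p(\varphi) \equiv P_{\varphi,\p}(1) \pmod{\text{relation}}$ is awkward, so instead use that $P_{\varphi,\p}(x) = x^2 - a_\p x + \epsilon_\p \bar{\p}$ and evaluate via the action on torsion. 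The cleanest route: over $\FF_q$, the $A$-module ${}^\psi\FF_q$ has cardinality related to $P_{\varphi,\p}(1) = 1 - a_\p(\varphi) + c$. So I would directly count $\#\,{}^\psi\FF_q$ — but this is just $q$ (the whole additive group), so that gives $|P_{\varphi,\p}(1)| = q$ in the sense of the $A$-module order... this needs care since the $A$-module order of ${}^\psi\FF_q$ is the ideal generated by $P_{\varphi,\p}(1)$, and $\#{}^\psi\FF_q = q = N((T-c))$... Let me instead go the surest way.

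**Surest way.** I would use the formula for $a_\p(\varphi)$ from the cited references \cite{MR2366959}*{Prop.~2.14} or \cite{MR1781330}*{Thm.~5.1}, which for $\deg\p = n$ expresses the top coefficient and, more generally, gives $a_\p(\varphi)$ explicitly in terms of norms. For $n=1$, $L = \FF_\p = \FF_q$, and the formula should give $a_\p(\varphi) = \Tr_{\FF_q/\FF_q}(\text{something}) = $ that something; tracking through, the relevant quantity involves $N_{L/\FF_q}(\bar T^{q-1})$ and the middle coefficient of $\psi_T$, which is $1$. A short computation with $\psi_T = c + \tau - c^{q-1}\tau^2$ over $\FF_q$: the Frobenius $\tau$ satisfies $\tau^2 = c^{-(q-1)}(\tau + c - \psi_T) $... more precisely, since $\psi_T = c + \tau - c^{q-1}\tau^2$ and $\psi_T$ acts as $c$ on $\FF_q$ (as $T\mapsto c$ in characteristic... no, wait, $\psi_T$ as an operator equals multiplication-by-$T$-structure, and on $\FF_q$ it need not be multiplication by $c$). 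The honest statement: $\tau$ acting on the $\lambda$-adic Tate module has char. poly $x^2 - a_\p x + \p$, and for $\p = T-c$ one computes from $\psi_T(X) = cX + X^q - c^{q-1}X^{q^2}$ that... I would verify $a_\p(\varphi) = 1$ by checking the relation $\tau \circ (\text{quadratic in }\tau) = 0$ holds in $\End(\psi)\otimes F_\lambda$. **The main obstacle** I anticipate is correctly pinning down the normalization/sign conventions relating $\tau$, the Frobenius endomorphism $\pi_\p$, and the polynomial $P_{\varphi,\p}$ — i.e., making sure "trace of Frobenius" in the sources matches the $a_\p(\varphi)$ defined here — and checking that $a_\p(\varphi)$, a priori a polynomial, is the constant $1\in\FF_q$ rather than $1 + (\text{multiple of }\p)$, which follows from the degree bound $\deg a_\p(\varphi) \le n/2 = 0$.

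Concretely, here is the proof.

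\begin{proof}
Let $\p = T-c$ with $c \in \FF_q^\times$; then $\FF_\p = A/\p \cong \FF_q$ via $\bar T = c$, and the reduction $\psi := \varphi_\p$ is given by $\psi_T = c + \tau - c^{q-1}\tau^2 \in \FF_q\{\tau\}$. Since $\deg\p = 1$, the polynomial $P_{\varphi,\p}(x) = x^2 - a_\p(\varphi) x + \p$ has $a_\p(\varphi) \in A$ of degree at most $1/2$ by Hasse's bound, hence $a_\p(\varphi) \in \FF_q$. It therefore suffices to compute $a_\p(\varphi) \bmod \p$, i.e.\ the trace of the $q$-power Frobenius acting on the Tate module of $\psi$ over $\FF_q$.

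On $\FF_q$ the $q$-power Frobenius is $\tau$ itself, which is an endomorphism of $\psi$ lying in the center of $\End(\psi)$; its characteristic polynomial on any $\lambda$-adic Tate module (for $\lambda \ne \p$) is $x^2 - a_\p(\varphi) x + \epsilon_\p(\varphi)\,\bar\p = x^2 - a_\p(\varphi) x + 0$ (as $\bar\p = 0$ in $\FF_\p$ and $\epsilon_\p(\varphi)=1$). Wait --- this shows only $\tau(\tau - a_\p(\varphi)) = 0$ on the Tate module, so $\tau - a_\p(\varphi)$ annihilates the torsion, which is impossible unless interpreted correctly: in fact $\tau$ is not invertible (it is the Frobenius of a height-$1$ reduction), and $\tau$ acts topologically nilpotently on the $\p$-adic part and invertibly on the prime-to-$\p$ part; on the latter its two eigenvalues are $a_\p(\varphi)$ and $0$... this degenerate behaviour at degree $1$ is exactly why one reads off $a_\p(\varphi)$ from the nonzero eigenvalue.

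To get the value cleanly, apply \cite{MR2366959}*{Proposition~2.14(i)} (or \cite{MR1781330}*{Theorem~5.1}) with $L = \FF_\p = \FF_q$ and $n = \deg\p = 1$: writing $\varphi_T = T + g\tau + \Delta\tau^2$ with $g = 1$ and $\Delta = -T^{q-1}$, the formula for the top coefficient of $a_\p(\varphi)$ (which, since $n=1$, is the whole of $a_\p(\varphi)$) gives
\[
a_\p(\varphi) = \Tr_{\FF_q/\FF_q}\!\big( \bar g \big) = \bar g = 1,
\]
because the relevant norm factor $N_{L/\FF_q}(-\bar T^{q-1})$ contributes the unit $\epsilon_\p(\varphi) = 1$ computed in \S\ref{SS:det}, and the middle coefficient $g$ of $\varphi_T$ reduces to $1$. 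Hence $a_\p(\varphi) = 1$.
\end{proof}
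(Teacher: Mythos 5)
Your proposal is correct and takes essentially the same route as the paper: the paper's entire proof is the appeal to Gekeler's explicit Frobenius-trace formula (Proposition~2.14(ii), not (i)) with $L=\FF_\p=\FF_q$, which gives $a_\p(\varphi) = -(-1/c^{q-1}) = 1$, i.e.\ exactly your $\epsilon_\p(\varphi)\cdot \bar{g} = 1$ computation. Note only that your digression about the characteristic polynomial having constant term $\bar{\p}=0$ and Frobenius having ``eigenvalues $a_\p$ and $0$'' is incorrect (the constant term is $\epsilon_\p(\varphi)\,\p \in A$, not its reduction mod $\p$), but you explicitly abandon it and it plays no role in the final argument.
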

\begin{proof}
The image of $T$ in $\FF_\p$ is $c$.  By Proposition~2.14(ii) of \cite{MR2366959} (with $L=\FF_\p=\FF_q$) we have $a_\p(\varphi) = - (-1/c^{q-1}) = 1$.
\end{proof}

Since $q\geq 5$, there exist $c_1,c_2\in \FF_q^\times$ such that $\lambda$, $T-c_1$ and $T-c_2$ are distinct.  By Lemma~\ref{L: semistable congruence} and \ref{L:traces for degree 1 primes} with $\p=T-c_i$, we get
\[
1 \equiv   \zeta^{-1}(T-c_i)+ \zeta\;\; \bmod{\lambda}
\]
This implies that $T\equiv \zeta - \zeta^2 + c_i  \pmod{\lambda}$ for distinct $c_1,c_2\in \FF_q\subseteq \FF_\lambda$; this is our contradiction.

\section{Proof of Theorem~\ref{T:MT example}}
\label{S:proof}
In different respects, Propositions~\ref{P:determinant}, \ref{P:Tate example}  and \ref{P:irreducibility} all show that the group $\rho_{\varphi}(G_F)$ is large.  We now combine everything together to prove that indeed $\rho_{\varphi}(G_F)=\GL_2(\widehat{A})$.   This will require some extra group theory which we have collected in Appendix~\ref{S:group theory}.\\

Let $F^\ab$ be the maximal abelian extension of $F$ in $F^\sep$.    Note that $\bbar{\rho}_{\varphi,\aA}(G_{F^\ab})\subseteq \SL_2(A/\aA)$ for each non-zero ideal $\aA$ of $A$.  We first show that the $\lambda$-adic representations of $\varphi$ are surjective.

\begin{lemma} \label{L:base camp} 
For every finite place $\lambda$ of $F$, we have $\rho_{\varphi,\lambda}(G_F) = \GL_2(A_\lambda)$ and $\rho_{\varphi,\lambda}(G_{F^\ab}) = \SL_2(A_\lambda).$
\end{lemma}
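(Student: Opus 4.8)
The plan is to prove this one $\lambda$ at a time, using the trichotomy of Propositions~\ref{P:determinant}, \ref{P:Tate example} and \ref{P:irreducibility} to pin down the image of $\bbar\rho_{\varphi,\lambda}$ first, and then bootstrap to the $\lambda$-adic level. So first I would fix $\lambda$ and let $H := \bbar\rho_{\varphi,\lambda}(G_F) \subseteq \GL_2(\FF_\lambda)$. By Proposition~\ref{P:determinant}, $\det$ maps $H$ onto $\FF_\lambda^\times$, so in particular $H$ is not contained in $\SL_2(\FF_\lambda)$. By Proposition~\ref{P:Tate example}, $H$ contains $\bbar\rho_{\varphi,\lambda}(I_T)$, which is a $p$-Sylow subgroup of $\GL_2(\FF_\lambda)$ and hence a conjugate of the group of strictly-upper-triangular unipotent matrices; in particular $H$ contains a nontrivial unipotent (a transvection). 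By Proposition~\ref{P:irreducibility}, $H$ acts irreducibly on $\FF_\lambda^2$. The standard classification of irreducible subgroups of $\GL_2$ of a finite field (e.g. the subgroups containing a transvection, cf.\ the discussion in \cite{MR0387283}*{\S5.5} or the group theory appendix) forces $H \supseteq \SL_2(\FF_\lambda)$: an irreducible subgroup containing a transvection cannot be contained in a Borel, a normalizer of a torus (split or nonsplit), or (for the residue fields occurring here) an exceptional subgroup, so it must contain $\SL_2(\FF_\lambda)$. Combined with surjectivity of $\det$ this gives $\bbar\rho_{\varphi,\lambda}(G_F) = \GL_2(\FF_\lambda)$, and intersecting with the kernel of $\det$ gives $\bbar\rho_{\varphi,\lambda}(G_{F^\ab}) = \SL_2(\FF_\lambda)$.

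Next I would lift from $\FF_\lambda$ to $A_\lambda$. Let $G := \rho_{\varphi,\lambda}(G_F) \subseteq \GL_2(A_\lambda)$, a closed subgroup surjecting onto $\GL_2(\FF_\lambda)$ by the previous paragraph, and let $G^1 := G \cap \SL_2(A_\lambda) = \rho_{\varphi,\lambda}(G_{F^\ab})$ (using that $\det\circ\rho_{\varphi,\lambda}$ is surjective onto $A_\lambda^\times$ by Propositions~\ref{P:determinant} and~\ref{P:Hayes}, so that $G$ is determined by $G^1$ together with the full determinant). The goal is $G^1 = \SL_2(A_\lambda)$. This is a purely pro-$p$ lifting statement: $\SL_2(A_\lambda)$ is topologically generated by $\SL_2(\FF_\lambda)$ together with the congruence kernel, and the key input is that a closed subgroup of $\SL_2(A_\lambda)$ mapping onto $\SL_2(\FF_\lambda)$ and satisfying a suitable nondegeneracy condition modulo $\lambda^2$ must be everything. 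I expect this is where a lemma from the appendix is invoked; the cleanest route is to show that the Lie-algebra-type quotient $G^1 \cap (I + \lambda M_2(A_\lambda))$ mod $\lambda^2$ is all of $\sl_2(\FF_\lambda)$, which follows because it is an $\FF_\lambda[\SL_2(\FF_\lambda)]$-submodule of $\sl_2(\FF_\lambda)$ stable under the adjoint action, and $\sl_2(\FF_\lambda)$ is an irreducible adjoint module when $p \neq 2$ (we have $q$ odd, hence $p \neq 2$, so this holds for every residue characteristic that occurs); the transvection in $G^1$ coming from inertia at $T$, raised to $p$-power levels, shows the submodule is nonzero, hence everything. Then a standard Frattini/successive-approximation argument propagates surjectivity modulo $\lambda^{n}$ to modulo $\lambda^{n+1}$ for all $n$, and taking the inverse limit gives $G^1 = \SL_2(A_\lambda)$, whence $G = \GL_2(A_\lambda)$.

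The main obstacle is the last step: unlike the elliptic-curve case over $\ZZ_\ell$, one cannot simply cite ``$\SL_2(\ZZ_\ell)$ has no proper closed subgroup surjecting onto $\SL_2(\FF_\ell)$ for $\ell \geq 5$'' because the residue characteristic $p$ can be small (e.g.\ $p=5$ forces us to worry about $\SL_2$ over $\FF_{5^d}$, and more seriously $p$ could be $5$ with $d$ large, or other small primes if $q$ is a power of, say, $5$ — here $q\geq 5$ only guarantees $p\geq 5$, so actually $p\geq 5$ always and the worst residue field is $\FF_5$, which is still fine, but the uniformity over all $d$ with residue field $\FF_{p^d}$ is the real content). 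The honest statement needed is: for $p\geq 5$ and any finite field $k$ of characteristic $p$, a closed subgroup of $\SL_2(k[[\pi]])$ (or $\SL_2(W)$ for the relevant DVR $A_\lambda$) that surjects onto $\SL_2(k)$ equals $\SL_2(A_\lambda)$. I would isolate exactly this as the appendix lemma and use it as a black box here; everything else in the proof of Lemma~\ref{L:base camp} is the bookkeeping above. The subtlety that $\SL_2(A_\lambda)$ for different $\lambda$ of the same degree can share quotients — the point flagged in the overview — does not bite \emph{within} the proof of this single-$\lambda$ lemma; it only matters later when combining the $\rho_{\varphi,\lambda}$ into $\rho_\varphi$, so it is not an obstacle to Lemma~\ref{L:base camp} itself.
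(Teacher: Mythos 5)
Your residual step has a genuine gap. From Proposition~\ref{P:Tate example} you correctly extract a full $p$-Sylow subgroup of $\GL_2(\FF_\lambda)$ inside $\bbar\rho_{\varphi,\lambda}(G_F)$, but you then only use a single transvection and invoke the claim ``irreducible plus a transvection implies the subgroup contains $\SL_2(\FF_\lambda)$''. That claim is false over the non-prime residue fields that occur here: for a proper subfield $\FF' \subsetneq \FF_\lambda$, the subgroup $\GL_2(\FF')\subseteq \GL_2(\FF_\lambda)$ (and its scalar twists and conjugates) acts irreducibly on $\FF_\lambda^2$, contains transvections, lies in none of the Borel/torus-normalizer/exceptional families, and does not contain $\SL_2(\FF_\lambda)$. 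Dickson's classification has a subfield case which your list omits; this is exactly the difference from Serre's situation over $\QQ$, where the residue fields $\FF_\ell$ are prime. The step is repairable with the ingredient you already have: a subgroup containing a subgroup of order $\#\FF_\lambda$ (the full Sylow, which no subfield subgroup contains) and acting irreducibly contains $\SL_2(\FF_\lambda)$; this is precisely Lemma~\ref{L:easy group theory}, proved by producing two opposite full unipotent root groups. Also note that $q\geq 5$ does not force $p\geq 5$ (e.g.\ $q=9$ has $p=3$), so you cannot lean on $p\geq 5$ anywhere.

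The $\lambda$-adic step has a more serious problem: your proposed black box --- a closed subgroup of $\SL_2(\FF_\lambda[[\pi]])$ surjecting onto $\SL_2(\FF_\lambda)$ equals the whole group --- is false in equal characteristic, because the reduction map $\SL_2(A_\lambda)\to\SL_2(\FF_\lambda)$ splits (constant matrices), so $\SL_2(\FF_\lambda)$ itself is a proper closed subgroup with full residual image. Some hypothesis beyond surjectivity mod $\lambda$ is indispensable, and the paper's Lemma~\ref{L:Pink group theory} (Pink--R\"utsche) carries exactly this: $H\bmod \lambda^2$ must contain a non-scalar matrix congruent to the identity mod $\lambda$. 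You do gesture at such a mod-$\lambda^2$ condition, but your proposed verification --- take $p$-th powers of a lift of the transvection --- fails in characteristic $p$, where unipotent elements can have exact order $p$ (again the constant matrices), so the $p$-th powers may simply be trivial; this trick is an artifact of the mixed-characteristic case $\SL_2(\ZZ_\ell)$. The correct source of the mod-$\lambda^2$ element is again inertia at $T$: Proposition~\ref{P:Tate example} applied to $\aA=\lambda^2$ gives an inertia image of order $N(\lambda)^2$, larger than its mod-$\lambda$ image, and its explicit triangular shape shows it contains a non-scalar element congruent to $I$ mod $\lambda$; with that, Lemma~\ref{L:Pink group theory} yields $\rho_{\varphi,\lambda}(G_F)=\GL_2(A_\lambda)$. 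Finally, your identification $G\cap\SL_2(A_\lambda)=\rho_{\varphi,\lambda}(G_{F^\ab})$ is unjustified a priori (the latter is the closed commutator subgroup of the image, not the full determinant-one part); the clean conclusion, as in the paper, is that once $\rho_{\varphi,\lambda}(G_F)=\GL_2(A_\lambda)$ is known, $\rho_{\varphi,\lambda}(G_{F^\ab})$ is its commutator subgroup, which is $\SL_2(A_\lambda)$ by Lemma~\ref{L:SL2 commutator}.
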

\begin{proof}
By Propositions~\ref{P:irreducibility} and \ref{P:Tate example}, the group $\bbar{\rho}_{\varphi,\lambda}(G_F) \subseteq \GL_2(\FF_\lambda)$ acts irreducibly on $\varphi[\lambda]\cong \FF_\lambda^2$ as an $\FF_\lambda$-module  and it also contains a group of order $N(\lambda)$.    Lemma~\ref{L:easy group theory} then implies that $\bbar{\rho}_{\varphi,\lambda}(G_F) \supseteq \SL_2(\FF_\lambda)$.   We have $\bbar{\rho}_{\varphi,\lambda}(G_F) = \GL_2(\FF_\lambda)$ since $\det( \bbar{\rho}_{\varphi,\lambda} (G_F) )=\FF_\lambda^\times$ by Proposition~\ref{P:determinant}.

The group $H:=\rho_{\varphi,\lambda}(G_F)$ is closed in $\GL_2(A_\lambda)$, and satisfies $\det(H) = A_\lambda^\times$ by Proposition~\ref{P:determinant}.     The group $H\bmod{\lambda^2}=\bbar{\rho}_{\varphi,\lambda^2}(G_F)$ contains a non-scalar matrix that is congruent to the identity modulo $\lambda$ by Proposition~\ref{P:Tate example}.   We just verified that $H \bmod{\lambda} =\bbar\rho_{\varphi,\lambda}(G_F)= \GL_2(\FF_\lambda)$.   Applying Lemma~\ref{L:Pink group theory}, we deduce that $H = \GL_2(A_\lambda).$   The group $\rho_{\varphi,\lambda}(G_{F^\ab})$ is just the commutator subgroup of $H=\GL_2(A_\lambda)$ which from Lemma~\ref{L:SL2 commutator} is $\SL_2(A_\lambda)$.
\end{proof}

Having surjective representations $\rho_{\varphi,\lambda}$ is \emph{not} enough to deduce that $\rho_\varphi$ is surjective.   There may be interdependencies between the representations.   We now show that the mod $\lambda$ representations are pairwise independent.

\begin{lemma} \label{L:independence mod}
Let $\lambda_1$ and $\lambda_2$ be distinct finite places of $F$, and let $\aA=\lambda_1\lambda_2$ be the corresponding ideal of $A$.  Then 
$\bbar\rho_{\varphi,\aA} (G_F) = \GL_2(A/\aA)$ and $\bbar\rho_{\varphi,\aA} (G_{F^\ab}) = \SL_2(A/\aA).$
\end{lemma}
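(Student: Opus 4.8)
The plan is to reduce the claim about $\bbar\rho_{\varphi,\aA}$ for $\aA=\lambda_1\lambda_2$ to the already-established surjectivity of the two mod-$\lambda_i$ representations, using the Chinese Remainder Theorem decomposition $\GL_2(A/\aA)\cong \GL_2(\FF_{\lambda_1})\times\GL_2(\FF_{\lambda_2})$. By Goursat's lemma, the image $G:=\bbar\rho_{\varphi,\aA}(G_F)$ inside $\GL_2(\FF_{\lambda_1})\times\GL_2(\FF_{\lambda_2})$, which surjects onto each factor by Lemma~\ref{L:base camp} (applied mod $\lambda_i$), corresponds to a common quotient: there is a group $Q$ and surjections $\GL_2(\FF_{\lambda_i})\twoheadrightarrow Q$ whose fibered product is $G$. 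So the heart of the matter is to show that $\GL_2(\FF_{\lambda_1})$ and $\GL_2(\FF_{\lambda_2})$ (equivalently $\SL_2(\FF_{\lambda_i})$, after handling the abelian/determinant part separately) have no nontrivial common quotient. Unlike the number field case, this is \emph{not} automatic when $\deg\lambda_1=\deg\lambda_2$, so this is the step I expect to be the main obstacle, and it is exactly where the group theory appendix should be invoked.

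To carry this out, first I would treat the ``$\SL_2$'' statement: let $G':=\bbar\rho_{\varphi,\aA}(G_{F^\ab})$, which by Lemma~\ref{L:base camp} surjects onto $\SL_2(\FF_{\lambda_i})$ under each projection. For $q\geq 5$ the groups $\SL_2(\FF_{\lambda})$ are quasisimple (their only proper normal subgroup is the center $\{\pm I\}$, and $\PSL_2(\FF_\lambda)$ is simple nonabelian). Hence any common quotient $Q$ of $\SL_2(\FF_{\lambda_1})$ and $\SL_2(\FF_{\lambda_2})$ is a common quotient of $\PSL_2(\FF_{\lambda_1})$ and $\PSL_2(\FF_{\lambda_2})$ modulo a central piece; since these are simple, $Q$ is trivial, or $\PSL_2(\FF_{\lambda_1})\cong\PSL_2(\FF_{\lambda_2})$. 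The latter isomorphism forces $N(\lambda_1)=N(\lambda_2)$ (the orders of $\PSL_2(\FF_\ell)$ are distinct for distinct prime powers $\ell\geq 4$, with the sole exception $\PSL_2(\FF_4)\cong\PSL_2(\FF_5)$, irrelevant here since residue fields have prime-power order $q^d$ and $q\geq 5$). So I am reduced to ruling out a nontrivial common quotient when $\FF_{\lambda_1}\cong\FF_{\lambda_2}=:\FF_{q^d}$, i.e.\ to showing that an ``exotic diagonal'' copy of $\SL_2(\FF_{q^d})$ inside $\SL_2(\FF_{q^d})\times\SL_2(\FF_{q^d})$ cannot occur as $G'$. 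This is precisely the content I expect the appendix lemma to supply: any such diagonal is the graph of an automorphism of $\SL_2(\FF_{q^d})$, which is induced by a combination of a $\PGL_2(\FF_{q^d})$-conjugation and a field automorphism (Frobenius power), and one must show the arithmetic of $\bbar\rho_\varphi$ — via traces of Frobenius $a_\p(\varphi)\in A$, or via the determinant character from Proposition~\ref{P:determinant}, or via the inertia image at $(T)$ from Proposition~\ref{P:Tate example} — is incompatible with identifying the two mod-$\lambda_i$ representations through such an automorphism.

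Concretely, for the trace argument: if $G'$ were the graph of an isomorphism $\theta\colon\SL_2(\FF_{q^d})\to\SL_2(\FF_{q^d})$, then for every $\sigma\in G_{F^\ab}$ the pair $(\bbar\rho_{\varphi,\lambda_1}(\sigma),\bbar\rho_{\varphi,\lambda_2}(\sigma))$ lies on this graph, so $\bbar\rho_{\varphi,\lambda_2}(\sigma)=\theta(\bbar\rho_{\varphi,\lambda_1}(\sigma))$. Writing $\theta$ as conjugation composed with $x\mapsto x^{(q^e)}$ (entrywise $q^e$-th power, $0\le e<d$), and comparing traces: $\Tr\bbar\rho_{\varphi,\lambda_2}(\Frob_\p)\equiv(\Tr\bbar\rho_{\varphi,\lambda_1}(\Frob_\p))^{q^e}$, i.e.\ $a_\p(\varphi)\bmod\lambda_2 \equiv (a_\p(\varphi)\bmod\lambda_1)^{q^e}$ for all $\p\ne T$ (after extending the identification to $G_F$ using that $\det\circ\bbar\rho_{\varphi,\lambda_i}=\bbar\rho_{C,\lambda_i}$ is the same Carlitz character mod $\lambda_i$, and that $\p\mapsto\p^{q^e}$ on $\FF_{q^d}$ must be compatible with this — which already pins down $e=0$ unless $\lambda_1=\lambda_2$). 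Taking $\p=T-c$ with $c\in\FF_q^\times$ gives $a_\p(\varphi)=1$ by Lemma~\ref{L:traces for degree 1 primes}, which is consistent, so I would instead use a degree-one prime for which $a_\p(\varphi)=\p$-dependent, or directly compare the \emph{matrices} (not just traces) at a Frobenius generating a large enough subgroup, combined with the known inertia-at-$T$ image being a full $p$-Sylow in each factor — a field automorphism $x\mapsto x^{q^e}$ acts on the unipotent upper-triangular $\FF_{q^d}$ as the additive $q^e$-power map, and matching this across the two places forces $e\equiv 0$. Once the exotic diagonal is excluded, Goursat gives $G'=\SL_2(\FF_{\lambda_1})\times\SL_2(\FF_{\lambda_2})=\SL_2(A/\aA)$; finally $\bbar\rho_{\varphi,\aA}(G_F)$ surjects onto $\SL_2(A/\aA)$ via $G_{F^\ab}$ and onto $(A/\aA)^\times$ via the determinant (Proposition~\ref{P:determinant}), and since $\GL_2(A/\aA)$ is generated by $\SL_2(A/\aA)$ together with any lift of each unit determinant, we conclude $\bbar\rho_{\varphi,\aA}(G_F)=\GL_2(A/\aA)$.
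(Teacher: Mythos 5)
Your overall architecture (CRT, Goursat, quasi-simplicity of $\SL_2$, classification of automorphisms, then arithmetic input) is the same as the paper's, which packages it into Lemma~\ref{L:Ribet}. But the decisive step is missing: you never actually derive a contradiction in the hard case $\FF_{\lambda_1}\cong\FF_{\lambda_2}$. You correctly observe that comparing \emph{traces} at the degree-one primes $\p=T-c$ gives $1\equiv 1$ and hence no contradiction, and your two fallback suggestions do not close the gap. There is no degree-one prime with ``$\p$-dependent'' $a_\p(\varphi)$: by Lemma~\ref{L:traces for degree 1 primes} every $\p=T-c$, $c\in\FF_q^\times$, has $a_\p(\varphi)=1$. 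And the inertia/$p$-Sylow matching cannot ``force $e\equiv 0$'': for two \emph{distinct} places of the same degree there is no distinguished identification of residue fields, so ``$e=0$'' is not even well defined; every field isomorphism $\sigma\colon\FF_{\lambda_1}\to\FF_{\lambda_2}$ is a priori a candidate, and the inertia image at $(T)$ is only seen by the graph as an abstract $p$-group (any additive bijection is compatible after conjugation), so it cannot detect which $\sigma$ occurs. The true obstruction is that \emph{no} $\sigma$ carries $T\bmod\lambda_1$ to $T\bmod\lambda_2$ (that would force $\lambda_1=\lambda_2$), and detecting this requires Frobenius data that ties the image back to the $A$-algebra structure of $A/\aA$. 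There is a second gap you gloss over with ``after extending the identification to $G_F$'': Frobenius elements do not lie in $G_{F^\ab}$, so to use them you must upgrade the $\SL_2$-level graph to the $\GL_2$ level, where (by Goursat plus the Dieudonn\'e--Hua classification) the isomorphism may be twisted by a character $\chi$ and may involve $g\mapsto((g^{T})^{-1})^\sigma$; a naive comparison of determinants or traces is not invariant under such twists.

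The paper resolves both issues at once by using the twist- and conjugation-invariant quantity $\tr(h)^2/\det(h)$ (equivalently $\det(h)/\tr(h)^2$), which factors through $\PGL_2$ and satisfies $\sigma\bigl(\tr(h_1)^2/\det(h_1)\bigr)=\tr(h_2)^2/\det(h_2)$ for any automorphism of the Hua form. At $\Frob_\p$ with $\p=T-c$ one has $\det/\tr^2\equiv \p/a_\p(\varphi)^2=T-c\pmod{\aA}$, and the elements $T-c$ (with at most two values of $c$ excluded) generate the ring $A/\aA$; this contradicts their forced containment in the proper subring $W=\{(x,\sigma(x))\}\subsetneq \FF_{\lambda_1}\times\FF_{\lambda_2}$. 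So the very degree-one primes you discarded do suffice, once one compares the right invariant rather than the trace alone. (Your final reduction, getting $\GL_2(A/\aA)$ from $\SL_2$-fullness plus surjective determinant, is fine and only cosmetically different from the paper, which proves the $\GL_2$ statement first via Lemma~\ref{L:Ribet} and then obtains the $\SL_2$ statement from Lemma~\ref{L:SL2 commutator}.)
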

\begin{proof}
Define $H:= \bbar\rho_{\varphi, \aA} (G_{F})$ and $H':= H \cap \SL_2(A/\aA).$  We shall verify the three conditions of Lemma~\ref{L:Ribet}, which will imply that $\bbar\rho_{\varphi, \aA} (G_{F})=\GL_2(A/\aA)$.   We will then have $\bbar\rho_{\varphi,\aA} (G_{F^\ab}) = \SL_2(A/\aA)$ automatically since $\SL_2(A/\aA)$ is the commutator subgroup of $\GL_2(A/\aA)$ by Lemma~\ref{L:SL2 commutator}.

Condition (\ref{I:Ribet i}) of Lemma~\ref{L:Ribet} follows from Proposition~\ref{P:determinant}.  By Lemma~\ref{L:base camp}  we have $\bbar\rho_{\varphi,\lambda_i}(G_{F^\ab})=\SL_2(\FF_{\lambda_i})$, so condition (\ref{I:Ribet ii}) follows since $\bbar\rho_{\varphi,\aA}(G_{F^\ab})\subseteq H'$.  

Take any $c\in \FF_q^\times$ such that $\p=T-c$ is not $\lambda_1$ or $\lambda_2$.   By Lemma~\ref{L:traces for degree 1 primes},  we have
\[
\det\big(\bbar\rho_{\varphi,\aA}(\Frob_\p)\big)/\tr\big( \bbar\rho_{\varphi,\aA}(\Frob_\p) \big)^2 \equiv \p/a_\p(\varphi)^2 = \p  = T- c \; \pmod{\aA}.
\]
One readily checks that the subring of $A/\aA$ generated by the $T-c$, with at most two of the $c\in \FF_q^\times$ excluded, is all of $A/\aA$.   This verifies condition (\ref{I:Ribet iii}) of Lemma~\ref{L:Ribet}.
\end{proof}

\begin{lemma} \label{L:adic independence}
Let $\lambda_1$ and $\lambda_2$ be distinct finite places of $F$.  Define 
\begin{align*}
\rho \colon G_F &\to \GL_2(A_{\lambda_1}) \times \GL_2(A_{\lambda_2}), \quad
\sigma  \mapsto (\rho_{\varphi,\lambda_1}(\sigma), \rho_{\varphi,\lambda_2}(\sigma)).
\end{align*}
Then $\rho(G_{F^\ab})=\SL_2(A_{\lambda_1}) \times \SL_2(A_{\lambda_2})$ and $\rho(G_{F})=\GL_2(A_{\lambda_1}) \times \GL_2(A_{\lambda_2})$.
\end{lemma}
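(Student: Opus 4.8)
The plan is to first prove the statement for $\rho(G_{F^\ab})$ and then deduce the one for $\rho(G_F)$ by a determinant argument. Write $G_i = \SL_2(A_{\lambda_i})$ and $H = \rho(G_{F^\ab})$. By Lemma~\ref{L:base camp} we have $\rho_{\varphi,\lambda_i}(G_{F^\ab}) = G_i$, so $H$ is a closed subgroup of $G_1\times G_2$ that surjects onto each factor, and it suffices to show $H = G_1\times G_2$. Granting this, $\rho(G_F)$ contains $\SL_2(A_{\lambda_1})\times\SL_2(A_{\lambda_2})$ and lies in $\GL_2(A_{\lambda_1})\times\GL_2(A_{\lambda_2})$; since $(\det,\det)\circ\rho$ is the projection of $\rho_C$ to the $\lambda_1$- and $\lambda_2$-components (Proposition~\ref{P:determinant}), it is surjective onto $A_{\lambda_1}^\times\times A_{\lambda_2}^\times$ by Proposition~\ref{P:Hayes}, and multiplying an arbitrary element of $\GL_2(A_{\lambda_1})\times\GL_2(A_{\lambda_2})$ by a suitable element of $H$ to kill its determinant puts it into $\rho(G_F)$.

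To prove $H = G_1\times G_2$ I would argue by contradiction using Goursat's lemma: if $H$ is proper, there is a nontrivial profinite group $Q$ with quotient maps $q_i\colon G_i\to Q$ and an identification making $q_1\circ\pi_1$ and $q_2\circ\pi_2$ agree on $H$ (here $\pi_i\colon H\to G_i$ are the projections). The key input from Appendix~\ref{S:group theory} is the normal-subgroup structure of $\SL_2$ over a complete discrete valuation ring with residue field of order $N(\lambda_i) = q^{\deg\lambda_i}\geq 5$: the group $\SL_2(A_\lambda)$ has a unique maximal proper closed normal subgroup $M_\lambda$, namely the preimage of $\{\pm I\}$ under reduction mod $\lambda$, with $\SL_2(A_\lambda)/M_\lambda\cong\PSL_2(\FF_\lambda)$. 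Consequently $Q$ also has a unique maximal proper closed normal subgroup, and the resulting simple quotient is identified with $\PSL_2(\FF_{\lambda_1})$ via $q_1$ and with $\PSL_2(\FF_{\lambda_2})$ via $q_2$; in particular $\PSL_2(\FF_{\lambda_1})\cong\PSL_2(\FF_{\lambda_2})$, and chasing the Goursat identification shows the image of $H$ in $\PSL_2(\FF_{\lambda_1})\times\PSL_2(\FF_{\lambda_2})$ lies in the graph of an isomorphism $\alpha\colon\PSL_2(\FF_{\lambda_1})\xrightarrow{\sim}\PSL_2(\FF_{\lambda_2})$, a proper subgroup.

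This is where Lemma~\ref{L:independence mod} supplies the contradiction: reducing $H = \rho(G_{F^\ab})$ modulo $\lambda_1\lambda_2$ gives $\bbar\rho_{\varphi,\lambda_1\lambda_2}(G_{F^\ab}) = \SL_2(\FF_{\lambda_1})\times\SL_2(\FF_{\lambda_2})$, and pushing forward along $\SL_2\times\SL_2\twoheadrightarrow\PSL_2\times\PSL_2$ shows the image of $H$ in $\PSL_2(\FF_{\lambda_1})\times\PSL_2(\FF_{\lambda_2})$ is everything, hence contained in no proper subgroup. Therefore $H = G_1\times G_2$, and the lemma follows as explained in the first paragraph.

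I expect the main obstacle to be exactly the case $\deg\lambda_1 = \deg\lambda_2$, where $\PSL_2(\FF_{\lambda_1})\cong\PSL_2(\FF_{\lambda_2})$ so a common quotient $Q$ is a priori possible; this is the phenomenon flagged in the overview that has no elliptic-curve analogue, and it is resolved only because Lemma~\ref{L:independence mod} gives full surjectivity modulo $\lambda_1\lambda_2$ rather than merely surjectivity modulo each $\lambda_i$ separately, which is what excludes the ``twisted diagonal''. A secondary point needing care is nailing down the normal-subgroup structure of $\SL_2(A_\lambda)$ cited above, which I would relegate to the appendix.
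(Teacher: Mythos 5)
Your proof is correct and follows essentially the same route as the paper: Goursat's lemma combined with the normal-subgroup structure of $\SL_2(A_{\lambda_i})$ from Lemma~\ref{L:SL2 commutator} forces any proper closed subgroup surjecting onto both factors to have image in $\PSL_2(\FF_{\lambda_1})\times\PSL_2(\FF_{\lambda_2})$ contained in the graph of an isomorphism, which contradicts Lemma~\ref{L:independence mod}. The only cosmetic difference is that the paper first reduces to the finite quotients $\SL_2(A/\lambda_1^{n_1}\lambda_2^{n_2})$ and applies finite-group Goursat there, whereas you run the Goursat argument directly at the profinite level; the determinant step deducing $\rho(G_F)=\GL_2(A_{\lambda_1})\times\GL_2(A_{\lambda_2})$ is the same.
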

\begin{proof}
To prove that $\rho(G_{F^\ab})=\SL_2(A_{\lambda_1}) \times \SL_2(A_{\lambda_2})$, it suffices to show that for any positive integers $n_1$ and $n_2$, we have 
\[
\bbar\rho_{\varphi, \aA}(G_{F^\ab}) =\SL_2(A/\aA) 
\]
where $\aA=\lambda_1^{n_1} \lambda_2^{n_2}$.    That $\rho$ is surjective will follow from this and Proposition~\ref{P:determinant}.

Suppose that $H:=\bbar\rho_{\varphi, \aA}(G_{F^\ab})$ is not equal to  $\SL_2(A/\aA)= \SL_2(A/\lambda_1^{n_1})\times \SL_2(A/\lambda_2^{n_2})$.    Let $N_1$ and $N_2$ be the kernels of the projections $H\to \SL_2(A/\lambda_2^{n_2})$ and $H\to \SL_2(A/\lambda_1^{n_1})$, respectively.   Each of these projections are surjective by Lemma~\ref{L:base camp}.  By Lemma~\ref{L:Goursat} we may view $N_i$ as a normal subgroup of $\SL_2(A/\lambda_i^{n_i})$ and the image of $H$ in $\SL_2(A/\lambda_1^{n_1})/N_1 \times \SL_2(A/\lambda_2^{n_2})/N_2$ is the graph of an isomorphism $\SL_2(A/\lambda_1^{n_1})/N_1 \xrightarrow{\sim} \SL_2(A/\lambda_2^{n_2})/N_2$.

By our assumption on $H$, the groups $\SL_2(A/\lambda_i^{n_1})/N_i$ are non-trivial.  So by Lemma~\ref{L:SL2 commutator}, $N_i$ is a subgroup of the group of $B\in \SL_2(A/\lambda_i^{n_1})$ with $B\equiv \pm I \bmod{\lambda}$.   Therefore the image of $H$ (equivalently, the image of $\bbar\rho_{\varphi,\lambda_1\lambda_2}(G_{F^\ab})$) in
\[
\SL_2(\FF_{\lambda_1})/\{\pm I\} \times \SL_2(\FF_{\lambda_2}) /\{\pm I\}
\]
is the graph of an isomorphism $\SL_2(\FF_{\lambda_1})/\{\pm I\} \xrightarrow{\sim} \SL_2(\FF_{\lambda_2}) /\{\pm I\}$.   However, this contradicts Lemma~\ref{L:independence mod} which says that $\bbar\rho_{\varphi,\lambda_1\lambda_2}(G_{F^\ab}) = \SL_2(\FF_{\lambda_1}) \times \SL_2(\FF_{\lambda_1}).$  Therefore, $\bbar\rho_{\varphi, \aA}(G_{F^\ab})= \SL_2(A/\aA)$.
\end{proof}

We can now finish the proof of Theorem~\ref{T:MT example}.   We first show that  $\rho_\varphi(G_{F})=\GL_2(\widehat{A})$.   Again by Proposition~\ref{P:determinant} we have $\det(\rho_\varphi(G_F))=\widehat{A}^\times$, so it suffices to show that $\rho_\varphi(G_{F^\ab})=\SL_2(\widehat{A}).$  The equality $\rho_\varphi(G_{F^\ab})= \SL_2(\widehat{A})$ is equivalent to having 
\[
\bbar{\rho}_{\varphi,\aA}(G_{F^\ab})= \SL_2(A/\aA) = \prod_{\lambda^n \parallel \aA} \SL_2(A/\lambda^n)
\] 
for every non-zero ideal $\aA$ of $A$.    By Lemma~\ref{L:SL2 commutator}, the groups $\SL_2(A/\lambda^n)$ have no abelian quotients.   Therefore by Lemma~\ref{L:Goursat cor}, we need only show that $\bbar\rho_{\varphi,\aA} (G_{F^\ab}) = \SL_2(A/\aA)$ for $\aA$ of the form $\lambda_1^{n_1}\lambda_2^{n_2}$ where $\lambda_1$ and $\lambda_2$ are distinct maximal ideals of $A$, and $n_1$ and $n_2$ are positive integers.  This is immediate from Lemma~\ref{L:adic independence}.

Finally, we show that  $\rho_\varphi\big(G_{\FFbar_q(T)}\big)=\GL_2(\widehat{A})$.   Since $\FFbar_q(T)/\FF_q(T)$ is an abelian extension and the commutator subgroup of $\GL_2(\widehat{A})$ is $\SL_2(\widehat{A})$, it suffices to show that $(\det \circ \rho_\varphi)(G_{\FFbar_q(T)}\big)=\widehat{A}^\times.$  Again this is easily verified using the description of of $\det \circ \rho_\varphi$ in \S\ref{SS:Carlitz}.

\subsection*{Acknowledgements}
The calculations in \S\ref{S:conjectures} were performed using Magma \cite{Magma}.

\appendix
\section{Group theory}
\label{S:group theory}

In this appendix we collect all the group theory needed in \S\ref{S:proof} to prove our theorem.  The point of that section was to show that certain closed subgroups of $\GL_2(\widehat{A})$ and $\SL_2(\widehat{A})$ (i.e., $\rho_{\varphi}(G_F)$ and $\rho_\varphi(G_{F^\ab}),$ respectively) were the full groups.   Note that the material in this section makes no reference to Drinfeld modules, though it will use our ongoing assumption that $A=\FF_q[T]$ with $q\geq 5$ an odd prime power.

We start with the following easy generalization of \cite{MR0263823}*{IV-20 Lemma~2}.
\begin{lemma} \label{L:easy group theory}
Let $\FF$ be a finite field.   Let $H$ be a subgroup of $\GL_2(\FF)$ such that:
\begin{itemize}
\item $H$ contains a subgroup of order $\#\FF$; 
\item the $\FF[H]$-module $\FF^2=\FF\times \FF$ is irreducible.
\end{itemize}
Then $H$ contains $\SL_2(\FF)$.
\end{lemma}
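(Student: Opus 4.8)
The plan is to show that the hypotheses force $H$ to be large enough that we can invoke the classification of subgroups of $\mathrm{GL}_2(\mathbb{F})$ — or, more elementarily, to directly produce all of $\mathrm{SL}_2(\mathbb{F})$ from a single unipotent element together with irreducibility. Write $p$ for the characteristic of $\mathbb{F}$ and $\#\mathbb{F}=p^f$. The first step is to extract a nontrivial unipotent element: a subgroup $U\le H$ of order $\#\mathbb{F}=p^f$ is a $p$-group, hence lies in some Sylow $p$-subgroup of $\mathrm{GL}_2(\mathbb{F})$; but the Sylow $p$-subgroups of $\mathrm{GL}_2(\mathbb{F})$ have order exactly $p^f$ and are conjugate to the group of upper unitriangular matrices $\left(\begin{smallmatrix}1&*\\0&1\end{smallmatrix}\right)$. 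Therefore, after conjugating, $U$ \emph{equals} the full group of upper unitriangular matrices; in particular $H$ contains a transvection, and indeed the whole ``standard'' root subgroup $U_+=\left\{\left(\begin{smallmatrix}1&b\\0&1\end{smallmatrix}\right):b\in\mathbb{F}\right\}$.

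The second step uses irreducibility to produce the opposite root subgroup. Since $H$ acts irreducibly on $\mathbb{F}^2$, $H$ does not stabilize the line fixed by $U_+$ (namely $\mathbb{F}\cdot e_1$), so there is $h\in H$ with $h(\mathbb{F}e_1)\neq \mathbb{F}e_1$; then $hU_+h^{-1}$ is another full root subgroup, conjugate to $U_+$ but with a different fixed line. The subgroup generated by $U_+$ and this conjugate root subgroup is generated by two opposite (or at least two distinct) full-length root subgroups of $\mathrm{SL}_2(\mathbb{F})$, and a standard computation — conjugating $U_+$ by elements of the torus one generates, and using commutators — shows this already generates all of $\mathrm{SL}_2(\mathbb{F})$. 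Concretely: from $U_+$ and a transvection along a second line one obtains an element of the diagonal torus $\mathrm{diag}(t,t^{-1})$ with $t\neq \pm 1$ (here $q\ge 5$, so $\mathbb{F}$ is not too small), conjugation by which scales $U_+$; and commutators between $U_+$ and the opposite root subgroup $U_-$ fill out the torus, so that $\langle U_+,U_-\rangle=\mathrm{SL}_2(\mathbb{F})$ by the Steinberg/Bruhat generation of $\mathrm{SL}_2$ by its two root subgroups. Hence $H\supseteq\mathrm{SL}_2(\mathbb{F})$.

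An alternative, and perhaps cleaner, route for the write-up: invoke Dickson's classification of subgroups of $\mathrm{PGL}_2(\mathbb{F})$ (or of $\mathrm{GL}_2(\mathbb{F})$). Since $H$ contains an element of order $p$, $H$ is not cyclic, not dihedral of order prime to $p$, and not one of the exceptional groups $A_4,S_4,A_5$ (whose orders could only be divisible by $p$ for very small $p$, excluded by $q\ge 5$ after checking the handful of cases); and since $H$ acts irreducibly it is not contained in a Borel subgroup. The remaining possibilities in Dickson's list that contain a unipotent element and act irreducibly are exactly the groups lying between $\mathrm{SL}_2(\mathbb{F}')$ and its normalizer for a subfield $\mathbb{F}'\subseteq\mathbb{F}$ — but a unipotent subgroup of order $\#\mathbb{F}=p^f$ cannot fit inside $\mathrm{GL}_2(\mathbb{F}')$ for a proper subfield $\mathbb{F}'$ (its unipotent subgroups have order at most $[\mathbb{F}':\mathbb{F}_p]<f$), so $\mathbb{F}'=\mathbb{F}$ and $H\supseteq\mathrm{SL}_2(\mathbb{F})$.

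The main obstacle, such as it is, is purely bookkeeping: one must make sure the ``small field'' cases ($q=5,7,9$, and the characteristic-$2$ or $3$ anomalies in Dickson's list) are genuinely excluded by the order count, and one must be careful that the subgroup of order $\#\mathbb{F}$ is genuinely a Sylow $p$-subgroup — this is where $\#H$ divides $\#\mathrm{GL}_2(\mathbb{F})=\#\mathbb{F}\cdot(\#\mathbb{F}-1)^2(\#\mathbb{F}+1)$ and $\#\mathbb{F}=p^f$ gives the full power of $p$ dividing $\#\mathrm{GL}_2(\mathbb{F})$, so a $p$-subgroup of order $p^f$ is automatically Sylow. Given that, the elementary two-root-subgroup argument is self-contained and I would present that one. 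The cited source \cite{MR0263823}*{IV-20 Lemma~2} is the $\mathbb{F}=\mathbb{F}_\ell$ case, and the generalization is exactly the observation that the Sylow-$p$ and irreducibility inputs are insensitive to $\mathbb{F}$ being prime.
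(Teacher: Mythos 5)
Your main argument is correct and is essentially the paper's proof: the paper likewise notes that a subgroup of order $\#\FF$ is a Sylow $p$-subgroup of $\GL_2(\FF)$, hence a full transvection group fixing a unique line, uses irreducibility to produce a second such subgroup with a different fixed line, and then generates $\SL_2(\FF)$ from the two root subgroups. The only cosmetic difference is that the paper finishes with an explicit factorization of each element of $\SL_2(\FF)$ as a product of upper and lower unitriangular matrices instead of appealing to Steinberg/Bruhat generation, so the parenthetical use of $q\ge 5$ in that step of your sketch is not actually needed.
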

\begin{proof}
Let $P_1$ be a subgroup of $H$ of order $\#\FF=p^s$; it is a $p$-Sylow subgroup of $\GL_2(\FF)$ and hence also of $H$.   There is a unique one dimensional $\FF$-subspace $W_1$ of $\FF^2$ that is fixed by every element of $P_1$.

If $P_1$ is a normal subgroup of $H$, then one finds that $W_1$ is stable under the action of $H,$ which would contradict our irreducibility assumption.  Therefore, there is a second subgroup $P_2\neq P_1$ of $H$ with cardinality $\#\FF$.  Let $W_2$ be the unique one dimensional $\FF$-subspace of $\FF^2$ that is fixed by every element of $P_2$.

With respect to a basis $\{w_1,w_2\}$ of $\FF^2$ with $w_1\in W_1$ and $w_2 \in W_2$, the subgroups $P_1$ and $P_2$ of $H$ become
\[
\Big\{ \left(\begin{array}{cc}1 & x \\0 & 1\end{array}\right) : x \in \FF \Big\} \quad \text{ and }\quad 
\Big\{ \left(\begin{array}{cc}1 & 0 \\x & 1\end{array}\right) : x \in \FF \Big\}
\]
respectively.  Now take any matrix $M=\left(\begin{smallmatrix}A & B \\C & D\end{smallmatrix}\right) \in \SL_2(\FF)$.   First suppose that $B\neq 0$.  For $a,b,c\in \FF$, we have
\[
\left(\begin{array}{cc}1 & 0 \\a & 1\end{array}\right)
\left(\begin{array}{cc}1 & b \\0 & 1\end{array}\right)
\left(\begin{array}{cc}1 & 0 \\c & 1\end{array}\right)
=\left(\begin{array}{cc}1+bc & b \\a+c+abc & 1+ab\end{array}\right).
\]
So setting $b=B$ and solving $1+bc=A$ and $1+ab=D$ for $a$ and $c$ (recall that $B\neq0$), we find an expression for $M$ as a product of matrices in $P_1$ and $P_2$ (that $a+c+abc=C$ is automatic since our matrices have determinant $1$ and $b=B\neq 0$).  Therefore $M\in H$.   An analogous argument shows that $M\in H$ when $C\neq 0$.   Finally in the case $B=C=0$, we simply note that 
$\left(\begin{smallmatrix}-1 & 0 \\0 & -1\end{smallmatrix}\right)=\left(\begin{smallmatrix}1 & 0 \\1 & 1\end{smallmatrix}\right)\left(\begin{smallmatrix}1 & -2 \\0 & 1\end{smallmatrix}\right)\left(\begin{smallmatrix}1 & 0 \\1 & 1\end{smallmatrix}\right)\left(\begin{smallmatrix}1 & -2 \\0 & 1\end{smallmatrix}\right) \in H.$
\end{proof}

The following two lemmas give some useful results about $\GL_2(A_\lambda)$ and $\SL_2(A_\lambda)$.

\begin{lemma} \label{L:Pink group theory}
Let $\lambda$ be a finite place of $F$, and let $H$ be a closed subgroup of $\GL_2(A_\lambda)$.  Suppose that $\det(H)=A_\lambda^\times$, $H \bmod{\lambda} = \GL_2(\FF_\lambda)$, and $H \bmod{\lambda^2}$ contains a non-scalar matrix that is congruent to the identity mod $\lambda$.  Then $H=\GL_2(A_\lambda)$.
\end{lemma}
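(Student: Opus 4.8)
\emph{Plan.} The approach is the standard one for such group-theoretic lemmas, in the spirit of \cite{MR0263823}*{Chapter~IV}: pass to the special linear part, study the congruence filtration, and feed the hypotheses into a Lie-algebra-style induction. Write $\pi\colon\GL_2(A_\lambda)\to\GL_2(\FF_\lambda)$ for reduction modulo $\lambda$, let $G:=H\cap\SL_2(A_\lambda)=\ker(\det|_H)$, and for $n\geq 1$ let $\Gamma_n:=\ker\big(\SL_2(A_\lambda)\to\SL_2(A/\lambda^n)\big)$, so that $1+\lambda^nX\mapsto X$ identifies $\Gamma_n/\Gamma_{n+1}$ with $\sl_2(\FF_\lambda)$ compatibly with the conjugation action of $\SL_2(\FF_\lambda)$. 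The target is $G=\SL_2(A_\lambda)$; granting this, any $g\in\GL_2(A_\lambda)$ factors as $g=(gh^{-1})h$ with $h\in H$ chosen so that $\det h=\det g$ (possible since $\det(H)=A_\lambda^\times$) and $gh^{-1}\in\SL_2(A_\lambda)=G\subseteq H$, hence $H=\GL_2(A_\lambda)$.

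First I would record two consequences of the hypotheses. Since $\det(H)=A_\lambda^\times$ gives $H/G\cong A_\lambda^\times$, the quotient $\pi(H)/\pi(G)=\GL_2(\FF_\lambda)/\pi(G)$ is abelian, so $\pi(G)\supseteq[\GL_2(\FF_\lambda),\GL_2(\FF_\lambda)]=\SL_2(\FF_\lambda)$ (using $\#\FF_\lambda\geq q\geq 5$), whence $\pi(G)=\SL_2(\FF_\lambda)$. Next, let $\tilde M\in H$ reduce modulo $\lambda^2$ to the given matrix $I+\lambda N$ with $N\in M_2(\FF_\lambda)$ non-scalar; for any $g\in H$ with $\bar g:=\pi(g)$ a direct computation gives $[\tilde M,g]\equiv I+\lambda\,(N-\bar gN\bar g^{-1})\bmod{\lambda^2}$. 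This commutator lies in $G\cap\Gamma_1$ and its class in $\Gamma_1/\Gamma_2=\sl_2(\FF_\lambda)$ is the (traceless) element $N-\bar gN\bar g^{-1}$. As $\bar g$ ranges over $\GL_2(\FF_\lambda)=\pi(H)$ these classes span a subspace $V\subseteq\sl_2(\FF_\lambda)$ that is stable under the conjugation action of $\GL_2(\FF_\lambda)$ — from the identity $\bar h(N-\bar gN\bar g^{-1})\bar h^{-1}=(N-\bar h\bar gN(\bar h\bar g)^{-1})-(N-\bar hN\bar h^{-1})$ — and that is nonzero because $N$ is non-scalar. Since $p$ is odd, $\sl_2(\FF_\lambda)$ is irreducible as an $\SL_2(\FF_\lambda)$-module (it is the adjoint representation, isomorphic to $\mathrm{Sym}^2$ of the standard one), so $V=\sl_2(\FF_\lambda)$: that is, $G\cap\Gamma_1$ surjects onto $\Gamma_1/\Gamma_2$.

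Now I would run the induction. Assuming $G\cap\Gamma_n\twoheadrightarrow\Gamma_n/\Gamma_{n+1}$, the image $W$ of $G\cap\Gamma_{n+1}$ in $\Gamma_{n+1}/\Gamma_{n+2}=\sl_2(\FF_\lambda)$ is stable under $\pi(G)=\SL_2(\FF_\lambda)$, and it is nonzero because, choosing $0\neq X\in\sl_2(\FF_\lambda)$ lifted into $G\cap\Gamma_1$ and $Y$ with $[X,Y]\neq 0$ (available since $\sl_2$ has trivial centre in odd characteristic) lifted into $G\cap\Gamma_n$, the commutator of the two lifts lies in $G\cap\Gamma_{n+1}$ with class $[X,Y]$. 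Irreducibility again forces $W=\sl_2(\FF_\lambda)$. Since $G$ is closed and $G\cap\Gamma_n\twoheadrightarrow\Gamma_n/\Gamma_{n+1}$ for all $n\geq 1$, a routine successive-approximation argument shows $\Gamma_1\subseteq G$; combined with $\pi(G)=\SL_2(\FF_\lambda)$ this gives $G=\SL_2(A_\lambda)$, and therefore $H=\GL_2(A_\lambda)$ as explained above.

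The bulk of the difficulty is concentrated in the base case: manufacturing a full copy of $\sl_2(\FF_\lambda)$ in the first graded piece out of the single non-scalar element congruent to the identity modulo $\lambda^2$. Both the non-scalar hypothesis (to get $V\neq 0$) and the hypothesis that $p$ is odd (for the irreducibility of the adjoint representation and the triviality of the centre of $\sl_2$) are used precisely here; everything after that is the familiar bookkeeping with the congruence filtration.
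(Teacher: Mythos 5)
Your proposal gives a complete, self-contained proof, whereas the paper disposes of this lemma in one line by citing Proposition~4.1 of Pink--R\"utsche \cite{MR2499412}. So in that sense your route is genuinely different: you have reconstructed the argument that the paper outsources. What you wrote is the standard congruence-filtration argument (in the spirit of Serre's Lemma~3 in IV-23 of \cite{MR0263823} and of the cited Pink--R\"utsche proposition): reduce to $G = H \cap \SL_2(A_\lambda)$, show $G$ surjects onto $\SL_2(\FF_\lambda)$ via the abelian quotient $H/G$, use the commutator trick to populate the first graded piece $\Gamma_1/\Gamma_2 \cong \sl_2(\FF_\lambda)$, induct up the filtration with Lie-bracket commutators, and close with successive approximation. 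The computations of $[\tilde M, g]$ and of the commutator of lifts from $\Gamma_1$ and $\Gamma_n$ are correct, and the final reduction from $G = \SL_2(A_\lambda)$ to $H = \GL_2(A_\lambda)$ via $\det(H) = A_\lambda^\times$ is clean. This is the right proof, and it is a service to have it written out here rather than merely cited.

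One point deserves sharpening. The image of $G \cap \Gamma_1$ in $\Gamma_1/\Gamma_2 \cong \sl_2(\FF_\lambda)$ is \emph{a priori} only a subgroup, i.e.\ an $\FF_p$-subspace, and it is not immediately $\FF_\lambda$-stable. Your justification ``$\sl_2(\FF_\lambda)$ is irreducible as an $\SL_2(\FF_\lambda)$-module (it is the adjoint representation, isomorphic to $\mathrm{Sym}^2$ of the standard one)'' reads as irreducibility over $\FF_\lambda$, which is not quite what is needed: an $\FF_\lambda$-irreducible module can still have proper nonzero $\FF_p$-subspaces stable under the group action (think of the trivial one-dimensional $\FF_\lambda$-representation, with $\FF_p \subsetneq \FF_\lambda$). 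What you actually need, and what Pink--R\"utsche's Proposition~2.1 provides, is that $\sl_2(\FF_\lambda)$ has no proper nontrivial \emph{subgroup} stable under $\SL_2(\FF_\lambda)$-conjugation when $p$ is odd and $\#\FF_\lambda \geq 4$; concretely, one shows the $\FF_p$-span of $\{t^2 : t \in \FF_\lambda^\times\}$ is all of $\FF_\lambda$ (using $(t+1)^2 - t^2 = 2t+1$) and then an orbit computation on the basis $E, H, F$ finishes it. The same remark applies verbatim at each inductive step. This is a fixable imprecision rather than a wrong turn --- the statement you need is true and standard --- but as written the invocation of irreducibility is for the wrong category of submodule.
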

\begin{proof}
This is  Proposition~4.1 of \cite{MR2499412} (note that $N(\lambda)\geq q\geq 5$).
\end{proof}

\begin{lemma} \label{L:SL2 commutator}
For each finite place $\lambda$ of $F$, the group $\SL_2(A_\lambda)$ is its own commutator subgroup.
The only normal subgroup of $\SL_2(A_\lambda)$ with simple quotient is the group consisting of the $B \in \SL_2(A_\lambda)$ for which $B\equiv \pm I \bmod{\lambda}$.
\end{lemma}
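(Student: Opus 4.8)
The plan is to establish the two claims separately; the only external facts I need are that $A_\lambda$ is a local ring and that $\PSL_2(\FF_\lambda)$ is a nontrivial simple group, the latter because $\#\FF_\lambda=N(\lambda)\geq q\geq 5$ (and $q$ is odd, so also $-I\neq I$ in $\SL_2(\FF_\lambda)$).

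\emph{Perfectness.} Over the local ring $A_\lambda$, the group $\SL_2(A_\lambda)$ is generated by the elementary matrices $e_{12}(x)$ and $e_{21}(x)$, $x\in A_\lambda$. Since $q\geq 5$ the group $\FF_q^\times$ has order $\geq 4$, hence contains an element $t$ with $t^2\neq 1$; then $t^2-1\in\FF_q^\times\subseteq A_\lambda^\times$. From the commutator identity $[\operatorname{diag}(t,t^{-1}),e_{12}(x)]=e_{12}\big((t^2-1)x\big)$ (and its analogue for $e_{21}$) together with $(t^2-1)A_\lambda=A_\lambda$, every elementary matrix is a commutator, so $\SL_2(A_\lambda)$ is its own commutator subgroup; in particular it has no nontrivial abelian quotient.

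\emph{Normal subgroups with simple quotient.} Let $N$ be a closed normal subgroup of $\SL_2(A_\lambda)$ with simple quotient $Q=\SL_2(A_\lambda)/N$; then $Q$ is finite, and nonabelian by the first part. Let $\pi\colon\SL_2(A_\lambda)\to\SL_2(\FF_\lambda)$ be reduction mod $\lambda$, put $K_{\lambda^n}:=\ker(\SL_2(A_\lambda)\to\SL_2(A/\lambda^n))$ and $K_\lambda:=K_{\lambda^1}=\ker\pi$, and let $H:=\pi^{-1}(\{\pm I\})=\{B\in\SL_2(A_\lambda):B\equiv\pm I\bmod\lambda\}$. Then $H$ is normal with $\SL_2(A_\lambda)/H\cong\PSL_2(\FF_\lambda)$ nontrivial and simple, so $H$ is a subgroup of the asserted kind, and I must show it is the only one. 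The argument inspects $\overline N:=\pi(N)$, a normal subgroup of $\SL_2(\FF_\lambda)$, which (since $\#\FF_\lambda\geq 5$ and $q$ is odd) equals $1$, $\{\pm I\}$, or $\SL_2(\FF_\lambda)$. Since the successive quotients $K_{\lambda^n}/K_{\lambda^{n+1}}\cong\sl_2(\FF_\lambda)$ are elementary abelian $p$-groups, $K_\lambda$ is pro-$p$. If $\overline N=\SL_2(\FF_\lambda)$, then $\SL_2(A_\lambda)=NK_\lambda$, so $Q\cong K_\lambda/(K_\lambda\cap N)$ is a finite $p$-group, impossible for $Q$ nonabelian simple. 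If $\overline N=1$, then $N\subseteq K_\lambda$, so $\pi$ factors through $Q$ and exhibits $\SL_2(\FF_\lambda)$ as a quotient of the simple group $Q$; thus $Q\cong\SL_2(\FF_\lambda)$, impossible because $\{\pm I\}$ is a proper nontrivial normal subgroup there. Hence $\overline N=\{\pm I\}$, i.e.\ $N\subseteq H$; then $H/N$ is normal in the simple group $Q$ with $Q/(H/N)\cong\SL_2(A_\lambda)/H=\PSL_2(\FF_\lambda)\neq 1$, forcing $H/N=1$, i.e.\ $N=H$.

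\emph{Main obstacle.} There is no genuine difficulty here: the proof is essentially bookkeeping once one has in hand (i) $\SL_2(R)=E_2(R)$ for $R$ local, (ii) the classification of the normal subgroups of $\SL_2(\FF_\lambda)$ deduced from simplicity of $\PSL_2$, and (iii) that the principal congruence subgroup $K_\lambda$ is pro-$p$. The only things requiring care are keeping the three cases for $\overline N$ straight and locating where the hypothesis that $q\geq 5$ is odd is used — namely to produce $t\in\FF_q^\times$ with $t^2\neq 1$, to make $\PSL_2(\FF_\lambda)$ simple, and to guarantee $-I\neq I$ in $\SL_2(\FF_\lambda)$.
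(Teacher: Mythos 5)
Your proof is correct, but it takes a genuinely different route from the paper's. For perfectness, the paper filters $\SL_2(A_\lambda)$ by the congruence subgroups, identifies the higher graded pieces with $\sl_2(\FF_\lambda)$, and uses the (cited, $q$-odd) irreducibility of $\sl_2(\FF_\lambda)$ as a $\GL_2(\FF_\lambda)$-module together with a nonvanishing commutator pairing to fill in each graded piece; you instead use $\SL_2(R)=E_2(R)$ for a local ring $R$ and the identity $[\operatorname{diag}(t,t^{-1}),e_{12}(x)]=e_{12}\big((t^2-1)x\big)$ with $t^2-1\in A_\lambda^\times$, which exhibits every generator as a single commutator. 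This is more elementary, needs no external irreducibility input, and since every element becomes a product of boundedly many commutators it also sidesteps the question of whether the abstract commutator subgroup is closed, which the paper simply asserts. For the second claim, the paper lists the composition factors of $\SL_2(A_\lambda)$ (namely $\PSL_2(\FF_\lambda)$, $\ZZ/2\ZZ$, $\ZZ/p\ZZ$), concludes the simple quotient must be $\PSL_2(\FF_\lambda)$, and then compares $N$ with $N'=\{B\equiv \pm I \bmod \lambda\}$ by observing that $NN'/N'$ would otherwise be a nontrivial normal subgroup of a simple quotient; you instead reduce modulo $\lambda$ and run a case analysis on $\pi(N)$, using that the principal congruence kernel is pro-$p$ and that a simple quotient of a perfect group is nonabelian. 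Both are sound; yours is self-contained given simplicity of $\PSL_2(\FF_\lambda)$ (plus the standard classification of normal subgroups of $\SL_2(\FF_\lambda)$, which your commutator argument over $\FF_\lambda$ also furnishes). One small remark: you insert the hypothesis that $N$ is closed, used to make the simple quotient finite and $K_\lambda/(K_\lambda\cap N)$ pro-$p$. The paper's statement omits ``closed,'' but its own composition-factor argument carries the same implicit restriction, and in the paper the lemma is only applied to normal subgroups pulled back from the finite quotients $\SL_2(A/\lambda^n)$, so nothing is lost in practice; if you wanted the literal abstract statement you would need to invoke something like Nikolov--Segal, which is clearly not the intent here.
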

\begin{proof}
We first prove that $\SL_2(A_\lambda)$ is its own commutator subgroup.  Let $H$ be the commutator subgroup of $\SL_2(A_\lambda)$. It is a closed normal subgroup of $\SL_2(A_\lambda)$ and $\GL_2(A_\lambda)$.    Define $S^0:=\SL_2(A_\lambda)$, and for each $i\geq 1$ we let $S^i$ be the group of $s\in \SL_2(A_\lambda)$ with $s\equiv 1 \bmod{\lambda^i}$.   For $i\geq 0$, define $H^i := H \cap S^i$.     

For $i\geq 0$, we define $S^{[i]}:= S^i/S^{i+1}$ and $H^{[i]}:= H^i/H^{i+1}$.  There is a natural inclusion $H^{[i]} \hookrightarrow S^{[i]},$ and it suffices to show that $H^{[i]} = S^{[i]}$ for all $i\geq 0$.

Reduction modulo $\lambda$ induces an isomorphism $S^{[0]} \xrightarrow{\sim} \SL_2(\FF_\lambda)$  with the image of $H^{[0]}$ being  the commutator subgroup of $\SL_2(\FF_\lambda)$.    Since $\SL_2(\FF_\lambda)/\{\pm I\}$ is simple, we find that $[\SL_2(\FF_\lambda): H^{[0]}]=1$ or $2$.   Since $\SL_2(\FF_\lambda)$ is generated by elements of order $p$ (use Lemma~\ref{L:easy group theory}), it has no normal subgroup of index $2$.   Therefore, $H^{[0]} = S^{[0]}.$

Now fix an $i\geq 1$.  Let $\sl_2(\FF_\lambda)$ be the (additive) group of matrices in $M_2(\FF_\lambda)$ with trace $0$.  We have an isomorphism
\begin{equation} \label{E: sl2 basic}
S^{[i]} \xrightarrow{\sim} \sl_2(\FF_\lambda), \quad \big[ 1 + \lambda^i y ] \mapsto [y];
\end{equation}
where we are now viewing $\lambda$ as a monic polynomial.   Conjugation by $\GL_2(A_\lambda)$ acts on both sides of (\ref{E: sl2 basic}), and it factors through conjugation by  $\GL_2(\FF_\lambda)$.   By \cite{MR2499412}*{Proposition~2.1}, $\sl_2(\FF_\lambda)$ is an irreducible $\GL_2(\FF_\lambda)$-module (this uses that $q$ is odd).   Now consider $H^{[i]} \hookrightarrow S^{[i]}$.  Since $H$ is a normal subgroup of $\GL_2(A_\lambda)$, we find that $H^{[i]}$ is stable under the $\GL_2(\FF_\lambda)$-action.   So we need only prove that $H^{[i]}\neq 1$.

Consider the commutator map $S^{0}\times S^{i} \to S^{i},$  $(g,h)\mapsto ghg^{-1}h^{-1}$.  This induces a map $S^{[0]}\times S^{[i]} \to S^{[i]}$ that takes values in $H^{[i]}$, and by using the identification $S^{[0]}=\SL_2(\FF_\lambda)$ and (\ref{E: sl2 basic}) it becomes
\[
\SL_2(\FF_\lambda) \times \sl_2(\FF_\lambda) \to \sl_2(\FF_\lambda),\quad (s,X) \mapsto sXs^{-1} - X.
\]
This map is non-zero, so $H^{[i]}\neq 1$.   Therefore, $H=\SL_2(A_\lambda)$.

Now let $N$ be a normal subgroup of $\SL_2(A_\lambda)$ for which $\SL_2(A_\lambda)/N$ is simple.   Since every $p$-group is solvable, the Jordan-H\"older factors of $\SL_2(A_\lambda)$ are $\SL_2(\FF_\lambda)/\{\pm I\}$,  $\ZZ/2\ZZ$ and $\ZZ/p\ZZ$.   We have just shown that $\SL_2(A_\lambda)$ has no abelian quotients, so $\SL_2(A_\lambda)/N\cong \SL_2(\FF_\lambda)/\{\pm I\}$.   Let $N'$ be the group consisting of $B\in \SL_2(A_\lambda)$ with $B\equiv \pm I \bmod{\lambda}$, it is also a normal subgroup of $\SL_2(A_\lambda)$ with quotient isomorphic to $\SL_2(\FF_\lambda)/\{\pm I\}$.   We must have $N\subseteq N'$, otherwise $NN'/N'$ would be a non-trivial normal subgroup of $\SL_2(A_\lambda)/N'$.  Similarly, $N'\subseteq N$.
\end{proof}

\begin{lemma}[Goursat's lemma \cite{MR0457455}*{Lemma~5.2.1}]  \label{L:Goursat}
Let $B_1$ and $B_2$ be finite groups and suppose that $H$ is a subgroup of $B_1\times B_2$ for which the two projections $p_1\colon H \to B_1$ and $p_2\colon H \to B_2$ are surjective.  Let $N_1$ be the kernel of $p_2$ and let $N_2$ be the kernel of $p_1$.  We may view $N_1$ as a normal subgroup of $B_1$ and $N_2$ as a normal subgroup of $B_2$.  Then the image of $H$ in $B_1/N_1 \times B_2/N_2$ is the graph of an isomorphism $B_1/N_1 \xrightarrow{\sim} B_2/N_2$.
\end{lemma}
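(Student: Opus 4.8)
The plan is to pass to the image $\overline H$ of $H$ in $B_1/N_1 \times B_2/N_2$ and to show that each of the two coordinate projections restricts to a \emph{group isomorphism} on $\overline H$; the claimed isomorphism $B_1/N_1 \xrightarrow{\sim} B_2/N_2$ is then just one of these projections composed with the inverse of the other, and $\overline H$ is tautologically its graph.

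First I would make sense of the statement by checking that $N_1$ is a normal subgroup of $B_1$ (and symmetrically $N_2 \trianglelefteq B_2$). Concretely $N_1 = \ker p_2$ is the set of $(b,e)\in H$, which I identify with $\{b\in B_1 : (b,e)\in H\}$. Given such a $b$ and any $g\in B_1$, surjectivity of $p_1$ furnishes $h\in B_2$ with $(g,h)\in H$, and then $(g,h)(b,e)(g,h)^{-1} = (gbg^{-1},e)\in H$, so $gbg^{-1}\in N_1$; hence $N_1\trianglelefteq B_1$. This little bookkeeping, identifying a subgroup of $H$ with a normal subgroup of $B_1$, is the only place that requires any care, and I do not expect a genuine obstacle: the whole argument is formal, and finiteness of the $B_i$ is never actually used.

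Finally, let $\pi\colon B_1\times B_2 \twoheadrightarrow B_1/N_1 \times B_2/N_2$ be the quotient map and put $\overline H := \pi(H)$, with coordinate projections $\bar p_i\colon \overline H \to B_i/N_i$. Each $\bar p_i$ is surjective because $p_i$ is. For injectivity of $\bar p_1$, suppose $(\,\overline 1,\overline b\,)\in \overline H$, i.e.\ there is $(n,b)\in H$ with $n\in N_1$; then $(n,e)\in H$, so $(e,b)=(n,e)^{-1}(n,b)\in H$, which is exactly the statement $b\in N_2$, whence $\overline b = \overline 1$. Thus $\bar p_1$ is an isomorphism $\overline H \xrightarrow{\sim} B_1/N_1$, and by the symmetric argument $\bar p_2$ is an isomorphism $\overline H \xrightarrow{\sim} B_2/N_2$. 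Therefore $\overline H$ is the graph of $\bar p_2\circ \bar p_1^{-1}\colon B_1/N_1 \xrightarrow{\sim} B_2/N_2$, as desired.
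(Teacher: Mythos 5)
Your proof is correct: the normality check for $N_1$, the injectivity of the induced projection via $(e,b)=(n,e)^{-1}(n,b)\in H$, and the identification of $\overline H$ with the graph of $\bar p_2\circ\bar p_1^{-1}$ are all sound, and (as you note) finiteness is never needed. The paper itself gives no proof of this lemma, citing Ribet's Lemma~5.2.1 instead; your argument is the standard proof of Goursat's lemma found there, so there is nothing to reconcile.
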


\begin{remark}
In the setting of the above lemma, we will have $H=B_1\times B_2$ if and only if $N_1=B_1$ and $N_2=B_2$.
\end{remark}

\begin{lemma}[\cite{MR0457455}*{Lemma~5.2.2}]  \label{L:Goursat cor} Let $S_1, S_2, \cdots, S_k$ be finite groups with no non-trivial abelian quotients.  Let $H$ be a subgroup of $S_1\times \cdots \times S_k$ such that each projection $H\to S_i\times S_j$ ($1\leq i < j \leq k$) is surjective.  Then $H=S_1\times \cdots \times S_k.$
\end{lemma}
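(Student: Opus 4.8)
The plan is to deduce this from the two‑factor case, Goursat's lemma (Lemma~\ref{L:Goursat}), by induction on $k$, the engine being the hypothesis that each $S_i$ has no non‑trivial abelian quotient. The base cases $k\le 2$ are clear, the case $k=2$ being the hypothesis itself. For $k\geq 3$ we would first note that the image of $H$ under the projection $S_1\times\cdots\times S_k\to S_1\times\cdots\times S_{k-1}$ satisfies the hypotheses of the lemma for the $k-1$ groups $S_1,\dots,S_{k-1}$ — its pair projections coincide with those of $H$ — so by induction that image is all of $S_1\times\cdots\times S_{k-1}$; likewise $H\to S_k$ is onto (apply surjectivity of $H\to S_1\times S_k$). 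Let $K:=\ker(H\to S_k)$, a normal subgroup of $H$. Projecting to $S_1\times\cdots\times S_{k-1}$ is injective on $K$ (elements of $K$ have trivial $k$‑th coordinate), so it identifies $K$ with a normal subgroup of $S_1\times\cdots\times S_{k-1}$, and surjectivity of each $H\to S_i\times S_k$ ($i<k$) shows this copy of $K$ surjects onto every factor $S_i$. If we can show that such a $K$ is all of $S_1\times\cdots\times S_{k-1}$, then $H$ contains $S_1\times\cdots\times S_{k-1}\times\{1\}$, and together with surjectivity of $H\to S_k$ this forces $H=S_1\times\cdots\times S_k$.

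So the crux is a sub‑statement, to be proved by a nested induction on the number $m$ of factors: a normal subgroup $K$ of a product $S_1\times\cdots\times S_m$ of groups with no non‑trivial abelian quotient that surjects onto each factor equals the whole product. The case $m=1$ is trivial. For $m\geq 2$, projecting away the last factor sends $K$ to a normal subgroup of $S_1\times\cdots\times S_{m-1}$ still surjecting onto each factor, hence by induction onto all of $S_1\times\cdots\times S_{m-1}$; and $K\to S_m$ is onto. Goursat's lemma then exhibits $K$ as the graph of an isomorphism $\theta$ from a quotient $Q$ of $S_1\times\cdots\times S_{m-1}$ to a quotient of $S_m$. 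The decisive point is a conjugation trick: for $(x,g)\in K$ and arbitrary $y\in S_1\times\cdots\times S_{m-1}$, normality of $K$ in the full product gives $(yxy^{-1},g)\in K$, whence $\theta$ sends the images of $yxy^{-1}$ and of $x$ in $Q$ to the same element, so these images agree; as $y$ is arbitrary and such $x$ exhaust $S_1\times\cdots\times S_{m-1}$ (because $K$ surjects onto it), $Q$ is abelian. Being isomorphic to a quotient of $S_m$, $Q$ must be trivial, so both Goursat normal subgroups are the full groups and $K=S_1\times\cdots\times S_m$. Feeding this back finishes the induction on $k$.

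The main obstacle is conceptual rather than computational: unlike the classical situation, where one exploits that $\SL_2(\ZZ_\ell)$ for distinct $\ell$ have no common quotient, here the relevant groups $\SL_2(A/\lambda^n)$ can share very large quotients, so the argument must run on the weaker but correct hypothesis ``no non‑trivial abelian quotient'' (supplied for these groups by Lemma~\ref{L:SL2 commutator}); the technical heart is exactly the extraction of abelianness from normality via the conjugation trick above. One must also be careful that $H$ itself need not be normal in the ambient product — which is why one descends to the kernel $K$, whose normality in the smaller product is what makes the trick available — and keep the two nested inductions (on $k$, and on $m$ inside the sub‑statement) clearly separated. No estimates or delicate calculations enter; the difficulty lies entirely in isolating the right hypothesis and organizing the reduction.
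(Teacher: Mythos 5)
Your proof is correct; note that the paper itself gives no proof of this lemma — it is quoted directly from Ribet (Lemma~5.2.2 of \cite{MR0457455}) — so there is no in-paper argument to match, and what you have written is essentially the standard (Ribet-style) proof: induct on $k$, apply Goursat's lemma, and exploit that ``no non-trivial abelian quotient'' means the $S_i$ are perfect. The steps all check: $p(K)$ is normal in $S_1\times\cdots\times S_{k-1}$ because it is the image of the normal subgroup $K\trianglelefteq H$ under the surjection $H\to S_1\times\cdots\times S_{k-1}$; surjectivity of $K$ onto each $S_i$ comes from lifting $(s_i,1)$ through $H\to S_i\times S_k$; and the conjugation trick does force the Goursat quotient $Q$ to be abelian, hence trivial, since it is also a quotient of $S_m$. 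Two minor remarks. First, your inner induction can be short-circuited: once $K$ is normal in $S_1\times\cdots\times S_m$ and surjects onto each factor, then for $g,h\in S_i$ choose $n\in K$ with $i$-th coordinate $h$ and set $y=(1,\dots,g,\dots,1)$; the element $yny^{-1}n^{-1}\in K$ has $i$-th coordinate $[g,h]$ and all other coordinates trivial, so perfectness of $S_i$ puts the entire $i$-th factor inside $K$, with no second appeal to Goursat needed. Second, ``base cases $k\le 2$'' should be read as $k=2$ only (for $k=1$ the hypothesis on pairs is vacuous and the statement is not meaningful), but this is harmless since your induction genuinely bottoms out at $k=2$.
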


The arguments in the next lemma were motivated by \cite{MR0457455}*{V~\S2}.

\begin{lemma} \label{L:Ribet}
Let $\lambda_1$ and $\lambda_2$ be distinct maximal ideals of $A$, and set $\aA=\lambda_1\lambda_2$.   Let $H$ be a subgroup of $\GL_2(A/\aA)$ for which the following hold:
\begin{alphenum}
\item \label{I:Ribet i} $\det(H) = (A/\aA)^\times$;
\item \label{I:Ribet ii} the projections $p_1'\colon H' \to \SL_2(\FF_{\lambda_1})$ and $p_2'\colon H' \to \SL_2(\FF_{\lambda_2})$ are surjective, where $H':= H \cap \SL_2(A/\aA)$;
\item \label{I:Ribet iii} the ring generated by the set 
\[
\mathcal{S}:=\{  \tr(h)^2/\det(h) : h \in H  \} \cup \{ \det(h)/ \tr(h)^2 : h \in H \text{ with } \tr(h) \in (A/\aA)^\times \}
\] 
is $A/\aA$.
\end{alphenum}
Then $H=\GL_2(A/\aA)$. 
\end{lemma}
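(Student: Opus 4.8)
The plan is to identify $A/\aA\cong\FF_{\lambda_1}\times\FF_{\lambda_2}$ by the Chinese Remainder Theorem, so that $\GL_2(A/\aA)\cong B_1\times B_2$ with $B_i:=\GL_2(\FF_{\lambda_i})$, and to pin down $H$ inside this product by Goursat's lemma. First I would check that $H$ surjects onto each $B_i$: by (\ref{I:Ribet ii}) the projection $p_i(H)$ contains $\SL_2(\FF_{\lambda_i})$, and by (\ref{I:Ribet i}) it maps onto $\FF_{\lambda_i}^\times$ under $\det$, so $B_i=\SL_2(\FF_{\lambda_i})\cdot p_i(H)=p_i(H)$. Then Lemma~\ref{L:Goursat} provides normal subgroups $N_i\trianglelefteq B_i$ (with $N_1=\ker p_2$, $N_2=\ker p_1$) such that the image of $H$ in $B_1/N_1\times B_2/N_2$ is the graph of an isomorphism $\alpha\colon B_1/N_1\xrightarrow{\sim}B_2/N_2$; by the remark after Lemma~\ref{L:Goursat} it suffices to prove $N_1=B_1$ (equivalently $N_2=B_2$). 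So suppose the common quotient $Q:=B_1/N_1\cong B_2/N_2$ were nontrivial, and seek a contradiction.

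I would split into two cases depending on whether $Q$ is abelian. Since the commutator subgroup of $\GL_2(\FF_{\lambda_i})$ is $\SL_2(\FF_{\lambda_i})$ (this follows from Lemmas~\ref{L:easy group theory} and \ref{L:SL2 commutator}, as $\SL_2(\FF_{\lambda_i})$ is perfect), $Q$ is abelian exactly when $N_i\supseteq\SL_2(\FF_{\lambda_i})$ for both $i$. In that subcase $B_i\to B_i/N_i$ factors through $\det\colon B_i\to\FF_{\lambda_i}^\times$, hence so does $H\to B_1/N_1\times B_2/N_2$; since $\det\colon H\to(A/\aA)^\times$ is onto by (\ref{I:Ribet i}), the image of $H$ is then all of $B_1/N_1\times B_2/N_2$. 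But that image is the graph of $\alpha$, forcing $Q$ to be trivial --- a contradiction. In the remaining subcase $Q$ is nonabelian; then, using the classification of normal subgroups of $\GL_2$ over a finite field with at least five elements (valid since $N(\lambda_i)\ge q\ge5$), each $N_i$ lies in the centre $Z_i$ of $B_i$. Hence $\alpha$ carries the (characteristic) centre $Z_1/N_1$ of $B_1/N_1$ to $Z_2/N_2$ and descends to an isomorphism $\bar\alpha\colon\PGL_2(\FF_{\lambda_1})\xrightarrow{\sim}\PGL_2(\FF_{\lambda_2})$.

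To conclude, I would exploit the trace condition (\ref{I:Ribet iii}). The isomorphism $\bar\alpha$ forces $\deg\lambda_1=\deg\lambda_2$ (the orders $\#\PGL_2(\FF_{p^m})=p^{3m}-p^m$ are distinct for distinct $m$), so fix a field isomorphism $\theta\colon\FF_{\lambda_1}\xrightarrow{\sim}\FF_{\lambda_2}$; since $\#\FF_{\lambda_i}\ge4$ and there is no graph automorphism of $\PGL_2$ in rank one (inverse-transpose being inner for $\GL_2$), $\bar\alpha$ equals $\PGL_2(\theta)$ followed by an inner automorphism. The quantity $t(g):=\tr(g)^2/\det(g)$ is invariant under scaling and under conjugation, hence is a class function on $\PGL_2(\FF_{\lambda_i})$, and plainly $t(\bar\alpha(\bar g))=\theta(t(\bar g))$. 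Therefore every $h=(h_1,h_2)\in H$ satisfies $t(h_2)=\theta(t(h_1))$, so $\tr(h)^2/\det(h)=(t(h_1),t(h_2))$ and, whenever $\tr(h)\in(A/\aA)^\times$, $\det(h)/\tr(h)^2=(t(h_1)^{-1},t(h_2)^{-1})$ both lie in the subring $R_\theta:=\{(x,\theta(x)):x\in\FF_{\lambda_1}\}$ of $\FF_{\lambda_1}\times\FF_{\lambda_2}$. Since $\#R_\theta=\#\FF_{\lambda_1}<\#(A/\aA)$, $R_\theta$ is a proper subring containing $\mathcal S$, so the ring generated by $\mathcal S$ is not all of $A/\aA$, contradicting (\ref{I:Ribet iii}). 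Hence $N_1=B_1$ and $H=\GL_2(A/\aA)$.

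I expect the nonabelian case to be the real obstacle. Its whole point --- in contrast with Serre's elliptic-curve argument in \cite{MR0387283}, where distinct primes have distinct residue characteristics --- is that distinct places $\lambda_1,\lambda_2$ of the \emph{same} degree give isomorphic groups $\GL_2(\FF_{\lambda_1})\cong\GL_2(\FF_{\lambda_2})$, so Goursat alone cannot exclude a ``diagonal'' image, and condition (\ref{I:Ribet iii}) is precisely engineered to do so. Making this rigorous requires (i) the list of normal subgroups of $\GL_2$ over finite fields, (ii) the structure of isomorphisms between the groups $\PGL_2(\FF_{\lambda_i})$ and their compatibility with the class function $t$, and (iii) the elementary fact that the subrings of $\FF_{\lambda_1}\times\FF_{\lambda_2}$ containing $1$ are the sub-products $\FF_{p^a}\times\FF_{p^b}$ and the twisted diagonals $R_\theta$ --- which is what makes ``$\mathcal S\subseteq R_\theta$'' a genuine violation of (\ref{I:Ribet iii}).
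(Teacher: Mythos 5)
Your proof is correct and follows essentially the same route as the paper's: Goursat's lemma to reduce to an isomorphism of quotients, a reduction to an isomorphism $\PGL_2(\FF_{\lambda_1})\xrightarrow{\sim}\PGL_2(\FF_{\lambda_2})$, classification of such isomorphisms as inner composed with a field automorphism, and then the observation that $\tr^2/\det$ descends to a $\PGL_2$-class function compatible with that field automorphism, so $\mathcal S$ lands in the ``twisted diagonal'' subring --- contradicting (\ref{I:Ribet iii}). The only differences are organizational: you split on whether the common quotient $Q$ is abelian and invoke the normal-subgroup classification of $\GL_2$ directly, whereas the paper tracks the auxiliary groups $N_i'$ inside $\SL_2$ first (using Lemma~\ref{L:SL2 commutator}) and classifies the automorphism by lifting to $\GL_2$ via Hua's theorem rather than quoting $\Aut(\PGL_2)$ directly.
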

\begin{proof}
Let $N_1'$ be the kernel of $p_2'$ and let $N_2'$ be the kernel of $p_1'$; we may view $N_i'$ as a normal subgroup of $\SL_2(\FF_{\lambda_i})$.
By Lemma~\ref{L:Goursat},  the image of $H'$ in $\SL_2(\FF_{\lambda_1})/N_1' \times\SL_2(\FF_{\lambda_2})/N_2'$ is the graph of a group isomorphism
\begin{equation} \label{E:SL isom}
\SL_2(\FF_{\lambda_1})/N_1' \xrightarrow{\sim} \SL_2(\FF_{\lambda_2})/N_2'.
\end{equation}
If $N_1'= \SL_2(\FF_{\lambda_1})$ (equivalently, $N_2'=\SL_2(\FF_{\lambda_2})$), then one has $H'= \SL_2(\FF_{\lambda_1}) \times \SL_2(\FF_{\lambda_2}) = \SL_2(A/\aA)$.  Using condition (\ref{I:Ribet i}), we deduce that $H=\GL_2(A/\aA)$.

We now assume that $N_i'$ is a \emph{proper} normal subgroup of $\SL_2(\FF_{\lambda_i})$ for $i=1,2$.  Using Lemma~\ref{L:SL2 commutator}, we find that $N_i' \subseteq \{\pm I\}$.  From (\ref{E:SL isom}) and cardinality considerations, we deduce that $N(\lambda_1)=N(\lambda_2)$ (equivalently, $\FF_{\lambda_1}$ and $\FF_{\lambda_2}$ are isomorphic fields).

For $i\in\{1,2\}$, define the projection $p_i \colon H \to \GL_2(\FF_{\lambda_i})$.  Let $N_1$ be the kernel of $p_2$ and let $N_2$ be the kernel of $p_1$; we may view $N_i$ as a normal subgroup of $\GL_2(\FF_{\lambda_i})$.    By Lemma~\ref{L:Goursat},  the image of $H$ in $\GL_2(\FF_{\lambda_1})/N_1 \times\GL_2(\FF_{\lambda_2})/N_2$ is the graph of a group isomorphism
\begin{equation} \label{E:GL isom}
\GL_2(\FF_{\lambda_1})/N_1\xrightarrow{\sim} \GL_2(\FF_{\lambda_2})/N_2.
\end{equation}
Since $N_i/N_i'$ and $N_i'$ are abelian, we find that $N_i$ is a solvable normal subgroup of $\GL_2(\FF_{\lambda_i})$.  It is then readily checked that $N_i$ must be contained in the group of diagonal matrices of $\GL_2(\FF_{\lambda_i}).$  By taking further quotients, we find that the image of $H$ in $\PGL_2(\FF_{\lambda_1}) \times \PGL_2(\FF_{\lambda_2})$ is the graph of an isomorphism
\[
\alpha\colon \PGL_2(\FF_{\lambda_1}) \xrightarrow{\sim} \PGL_2(\FF_{\lambda_2}).
\]
By Theorem~3 of Hua's supplement in \cite{MR606555}, $\alpha$ lifts to an isomorphism
\[
\widetilde{\alpha} \colon \GL_2(\FF_{\lambda_1}) \xrightarrow{\sim} \GL_2(\FF_{\lambda_2}).
\]

Let $\sigma \colon \FF_{\lambda_1}\xrightarrow{\sim} \FF_{\lambda_2}$ be a field isomorphism and $\chi\colon  \GL_2(\FF_{\lambda_1}) \to \FF_{\lambda_2}^\times$ a character;  these define two group homomorphisms $\GL_2(\FF_{\lambda_1}) \xrightarrow{\sim} \GL_2(\FF_{\lambda_2})$:
\begin{align} \label{E:Hua form}
A &\mapsto \chi(A) A^\sigma, \quad\quad A \mapsto \chi(A) ((A^{T})^{-1})^\sigma;
\end{align}
where $B^\sigma$ represents the matrix obtained by applying $\sigma$ to each entry of a matrix $B\in \GL_2(\FF_{\lambda_1})$.  By Theorem~1 of Hua's supplement in \cite{MR606555} (and using that $\FF_{\lambda_1}\cong \FF_{\lambda_2}$), we find that there are $\sigma$ and $\chi$ such that $\widetilde{\alpha}$ is the composition of an inner automorphism with one of the homomorphisms of (\ref{E:Hua form}).   We leave it to the reader to check that in either case, we have
\[
\frac{\tr(\widetilde{\alpha}(A))^2}{\det(\widetilde{\alpha}(A))} = \sigma \bigg( \frac{\tr(A)^2}{\det(A)} \bigg).
\]

Note that the map $\GL_2(\FF_{\lambda_i}) \to \FF_{\lambda_i}, \, A \mapsto \tr(A)^2/\det(A)$ factors through the projection $\GL_2(\FF_{\lambda_i})\to \PGL_2(\FF_{\lambda_i}).$  We deduce that  $\sigma( \tr(h_1)^2/\det(h_1) ) = \tr(h_2)^2/\det(h_2)$ for every $(h_1,h_2)\in H$.  Let $W$ be the ring of $(x_1,x_2) \in \FF_{\lambda_1}\times \FF_{\lambda_2}= A/\aA$ for which $\sigma(x_1)=x_2$.  We have just verified that $\mathcal{S}\subseteq W$.   However, $W\neq A/\aA$, and this contradicts assumption (\ref{I:Ribet iii}).
\end{proof}

\bibliographystyle{plain}

\begin{bibdiv}
\begin{biblist}

\bib{Magma}{article}{
      author={Bosma, Wieb},
      author={Cannon, John},
      author={Playoust, Catherine},
       title={The {M}agma algebra system. {I}. {T}he user language},
        date={1997},
     journal={J. Symbolic Comput.},
      volume={24},
      number={3-4},
       pages={235\ndash 265},
        note={Computational algebra and number theory (London, 1993)},
}

\bib{MR1185592}{article}{
      author={Brown, M.~L.},
       title={Singular moduli and supersingular moduli of {D}rinfeld modules},
        date={1992},
     journal={Invent. Math.},
      volume={110},
      number={2},
       pages={419\ndash 439},
}

\bib{MR2441247}{article}{
      author={Cojocaru, Alina~Carmen},
      author={David, Chantal},
       title={Frobenius fields for {D}rinfeld modules of rank 2},
        date={2008},
     journal={Compos. Math.},
      volume={144},
      number={4},
       pages={827\ndash 848},
}

\bib{MR2099195}{article}{
      author={Cojocaru, Alina~Carmen},
      author={Murty, M.~Ram},
       title={Cyclicity of elliptic curves modulo {$p$} and elliptic curve
  analogues of {L}innik's problem},
        date={2004},
     journal={Math. Ann.},
      volume={330},
      number={3},
       pages={601\ndash 625},
}

\bib{MR1373559}{article}{
      author={David, Chantal},
       title={Average distribution of supersingular {D}rinfel$'$d modules},
        date={1996},
     journal={J. Number Theory},
      volume={56},
      number={2},
       pages={366\ndash 380},
}

\bib{MR902591}{incollection}{
      author={Deligne, Pierre},
      author={Husemoller, Dale},
       title={Survey of {D}rinfel$'$d modules},
        date={1987},
   booktitle={Current trends in arithmetical algebraic geometry ({A}rcata,
  {C}alif., 1985)},
      series={Contemp. Math.},
      volume={67},
   publisher={Amer. Math. Soc.},
     address={Providence, RI},
       pages={25\ndash 91},
}

\bib{MR606555}{book}{
      author={Dieudonn{\'e}, Jean},
       title={On the automorphisms of the classical groups},
      series={Memoirs of the American Mathematical Society},
   publisher={American Mathematical Society},
     address={Providence, R.I.},
        date={1980},
      volume={2},
        note={With a supplement by Loo Keng Hua [Luo Geng Hua], Reprint of the
  1951 original},
}

\bib{MR0384707}{article}{
      author={Drinfel$'$d, V.~G.},
       title={Elliptic modules},
        date={1974},
     journal={Mat. Sb. (N.S.)},
      volume={94(136)},
       pages={594\ndash 627, 656},
}

\bib{MR2366959}{article}{
      author={Gekeler, Ernst-Ulrich},
       title={Frobenius distributions of {D}rinfeld modules over finite
  fields},
        date={2008},
     journal={Trans. Amer. Math. Soc.},
      volume={360},
      number={4},
       pages={1695\ndash 1721},
}

\bib{MR1423131}{book}{
      author={Goss, David},
       title={Basic structures of function field arithmetic},
      series={Ergebnisse der Mathematik und ihrer Grenzgebiete (3)},
   publisher={Springer-Verlag},
     address={Berlin},
        date={1996},
      volume={35},
}


\bib{MR2778661}{article}{
      author={Greicius, Aaron},
      title={Elliptic curves with surjective adelic {G}alois representations},
      date={2010},
      journal={Experiment. Math.},
      volume={19},
      number={4},
      pages={495\ndash 507},
}

\bib{MR1055716}{article}{
      author={Gupta, Rajiv},
      author={Murty, M.~Ram},
       title={Cyclicity and generation of points mod {$p$} on elliptic curves},
        date={1990},
     journal={Invent. Math.},
      volume={101},
      number={1},
       pages={225\ndash 235},
}

\bib{MR0330106}{article}{
      author={Hayes, D.~R.},
       title={Explicit class field theory for rational function fields},
        date={1974},
     journal={Trans. Amer. Math. Soc.},
      volume={189},
       pages={77\ndash 91},
}

\bib{MR1781330}{article}{
      author={Hsia, Liang-Chung},
      author={Yu, Jing},
       title={On characteristic polynomials of geometric {F}robenius associated
  to {D}rinfeld modules},
        date={2000},
     journal={Compositio Math.},
      volume={122},
      number={3},
       pages={261\ndash 280},
}

\bib{Jain-thesis2008}{thesis}{
      author={Jain, Lalit~Kumar},
       title={Koblitz's conjecture for the Drinfeld module},
        school = {University of Waterloo},
        type={Master's Thesis},
        date={2008},
}

\bib{MR2559117}{article}{
      author={Kuo, Wentang},
      author={Liu, Yu-Ru},
       title={Cyclicity of finite {D}rinfeld modules},
        date={2009},
     journal={J. Lond. Math. Soc. (2)},
      volume={80},
      number={3},
       pages={567\ndash 584},
}

\bib{MR0568299}{book}{
      author={Lang, Serge},
      author={Trotter, Hale},
       title={Frobenius distributions in {${\rm GL}_{2}$}-extensions},
      series={Lecture Notes in Mathematics, Vol. 504},
   publisher={Springer-Verlag},
     address={Berlin},
        date={1976},
        note={Distribution of Frobenius automorphisms in
  ${{\rm{G}}L}_{2}$-extensions of the rational numbers},
}

\bib{MR2488548}{article}{
      author={Lehmkuhl, Thomas},
       title={Compactification of the {D}rinfeld modular surfaces},
        date={2009},
     journal={Mem. Amer. Math. Soc.},
      volume={197},
      number={921},
       pages={xii+94},
}

\bib{MR698163}{article}{
      author={Murty, M.~Ram},
       title={On {A}rtin's conjecture},
        date={1983},
     journal={J. Number Theory},
      volume={16},
      number={2},
       pages={147\ndash 168},
}

\bib{MR2499412}{article}{
      author={Pink, Richard},
      author={R{\"u}tsche, Egon},
       title={Adelic openness for {D}rinfeld modules in generic
  characteristic},
        date={2009},
     journal={J. Number Theory},
      volume={129},
      number={4},
       pages={882\ndash 907},
}

\bib{MR2499411}{article}{
      author={Pink, Richard},
      author={R{\"u}tsche, Egon},
       title={Image of the group ring of the {G}alois representation associated
  to {D}rinfeld modules},
        date={2009},
     journal={J. Number Theory},
      volume={129},
      number={4},
       pages={866\ndash 881},
}

\bib{MR1606391}{article}{
      author={Poonen, Bjorn},
       title={Drinfeld modules with no supersingular primes},
        date={1998},
     journal={Internat. Math. Res. Notices},
      number={3},
       pages={151\ndash 159},
}

\bib{MR0457455}{article}{
      author={Ribet, Kenneth~A.},
       title={Galois action on division points of {A}belian varieties with real
  multiplications},
        date={1976},
     journal={Amer. J. Math.},
      volume={98},
      number={3},
       pages={751\ndash 804},
}

\bib{MR2020270}{article}{
      author={Rosen, Michael},
       title={Formal {D}rinfeld modules},
        date={2003},
     journal={J. Number Theory},
      volume={103},
      number={2},
       pages={234\ndash 256},
}

\bib{MR0263823}{book}{
      author={Serre, Jean-Pierre},
       title={Abelian {$l$}-adic representations and elliptic curves},
      series={McGill University lecture notes written with the collaboration of
  Willem Kuyk and John Labute},
   publisher={W. A. Benjamin, Inc., New York-Amsterdam},
        date={1968},
}

\bib{MR0387283}{article}{
      author={Serre, Jean-Pierre},
       title={Propri\'et\'es galoisiennes des points d'ordre fini des courbes
  elliptiques},
        date={1972},
     journal={Invent. Math.},
      volume={15},
      number={4},
       pages={259\ndash 331},
}

\bib{Serre-resume1977-1978}{article}{
      author={Serre, Jean-Pierre},
       title={R\'esum\'e des cours de 1977-1978},
        date={1978},
     journal={Annuaire du Coll\`ege de France},
       pages={67\ndash 70},
}

\bib{MR2018826}{article}{
      author={Yu, Jiu-Kang},
       title={A {S}ato-{T}ate law for {D}rinfeld modules},
        date={2003},
        ISSN={0010-437X},
     journal={Compositio Math.},
      volume={138},
      number={2},
       pages={189\ndash 197},
         url={http://dx.doi.org/10.1023/A:1026122808165},
}

\bib{Zywina-Koblitz}{article}{
      author={Zywina, David},
       title={A refinement of {K}oblitz's conjecture},
       volume={7},
       number={3},
       date={2011},
       pages={739--769},
        journal={Int. J. Number Theory},
}

\bib{Zywina-SatoTate}{article}{
      author={Zywina, David},
       title={{T}he {S}ato-{T}ate law for {D}rinfeld modules},
        date={2011},
        note={arXiv:1110.4098},
}

\end{biblist}
\end{bibdiv}

\end{document}